\def\NAT@def@citea{\def\@citea{\NAT@separator}}% Suppress spaces between citations using natbib.sty
\theoremstyle{plain}% Theorem-like structures provided by amsthm.sty
\newtheorem{theorem}{Theorem}[section]
\newtheorem{lemma}[theorem]{Lemma}
\newtheorem{corollary}[theorem]{Corollary}
\newtheorem{proposition}[theorem]{Proposition}
\theoremstyle{definition}
\newtheorem{definition}[theorem]{Definition}
\newtheorem{example}[theorem]{Example}
\theoremstyle{remark}
\newtheorem{remark}{Remark}
\renewcommand {\theenumi} {\rm\roman{enumi}}
\newcommand{\al}{\alpha}
\newcommand{\be}{\beta}
\newcommand{\ga}{\gamma}
\newcommand{\de}{\delta}
\newcommand{\eps}{\varepsilon}
\newcommand{\bx}{\bar x}
\newcommand{\iv}{^{-1} }
\newcommand {\R} {\mathbb R}
\newcommand {\N} {\mathbb N}
\newcommand {\B} {\mathbb B}
\newcommand {\dom} {{\rm dom}\,}
\newcommand {\epi} {{\rm epi}\,}
\newcommand {\sd} {\partial}
\newcommand{\folgt}{$ \Rightarrow\ $}
\newcommand{\vertiii}[1]{\left\vert\kern-0.25ex\left\vert\kern-0.25ex\left\vert #1\right\vert\kern-0.25ex\right\vert\kern-0.25ex\right\vert}
\newcommand{\vertiiiBig}[1]{\Big\vert\kern-0.25ex\Big\vert\kern-0.25ex\Big\vert #1\Big\vert\kern-0.25ex\Big\vert\kern-0.25ex\Big\vert}
\def\nbh{neighbourhood}
\def\es{\emptyset}
\def\RHS{right-hand side}
\def\EVP{Ekeland variational principle}
\def\Fr{Fr\'echet}
\newcommand{\norm}[1]{\left\Vert#1\right\Vert}
\newcommand{\abs}[1]{\left\vert#1\right\vert}
\newcommand{\ang}[1]{\left\langle #1 \right\rangle}
\newcommand{\AND}{\quad\mbox{and}\quad}
\newcounter{mycount}
\newcommand{\AK}[1]{\todo[inline]{AK {#1}}}
\newcommand\xqed{%
  \leavevmode\unskip\penalty9999 \hbox{}\nobreak\hfill
  \quad\hbox{$\triangleleft$}
  \smallskip}
\newcommand{\NDC}[1]{\todo[inline,color=green!40]{NDC {#1}}}
\begin{document}
	
\title{Generalized Separation of Collections of Sets}

\author{
\name{Nguyen Duy Cuong\textsuperscript{a}, Alexander Y. Kruger\textsuperscript{b}}
\thanks{CONTACT Alexander Y. Kruger. Email: alexanderkruger@tdtu.edu.vn}
%	\thanks{Dedicated to the memory of Prof Alexander Rubinov, a teacher and friend}
\affil{\textsuperscript{a} Department of Mathematics, College of Natural Sciences, Can Tho University, Can Tho, Vietnam;
\textsuperscript{b} Analytical and Algebraic Methods in Optimization Research Group, Faculty of Mathematics and Statistics, Ton Duc Thang University, Ho Chi Minh City, Vietnam}
}
\maketitle

\begin{abstract}
We show that the existing generalized separation statements including the conventional extremal principle and its extensions differ {in the ways  norms on product spaces are defined}.
We prove a general separation statement with arbitrary product norms covering the existing results of this kind.
The proof is divided into a series of claims and exposes the key steps and arguments used when proving generalized separation statements.
As an application, we prove dual necessary (sufficient) conditions for an abstract product norm extension of the approximate stationarity (transversality) property.
\end{abstract}

\begin{keywords}
extremal principle; separation; stationarity; transversality; optimality conditions
\end{keywords}

\begin{amscode}
	49J52; 49J53; 49K40; 90C30; 90C46
\end{amscode}

\setcounter{tocdepth}{2}
%\tableofcontents

\section{Introduction}

In this paper, we discuss generalized separation of collections of sets in a normed vector space in the sense of the dual necessary conditions in the \emph{extremal principle} \cite{KruMor80,MorSha96, Mor06.1}.
In recent years, several abstract generalized separation results characterizing non-intersection of collections of sets and covering the conventional extremal principle and its generalizations have been established.
The main aim of this paper is to clarify the relationship between the existing separation results of this kind.
We prove a more general (and simpler) generalized separation theorem involving abstract norms in product spaces and show that the existing results correspond to choosing appropriate norms in this theorem.

Throughout the paper we consider a collection of $n>1$ arbitrary nonempty subsets $\Omega_1,\ldots,\Omega_n$ of a vector space $X$.
We write $\{\Omega_1,\ldots,\Omega_n\}$ to denote the collection of these sets as a single object,
and use the notation $\widehat\Omega:=\Omega_1\times\ldots\times\Omega_n$ for the ``aggregate'' set in the product space $X^n$.
The space $X$ is usually assumed to be equipped with a norm $\|\cdot\|$, while for norms on product spaces we use the notation $\vertiii{\,\cdot\,}$ (sometimes also $\vertiii{\,\cdot\,}_+$).
We keep the same notations for the corresponding dual norms.
We use $\langle\cdot,\cdot\rangle$ and $d$ to denote, respectively, the bilinear form defining the pairing between the primal and dual spaces and distances (including point-to-set distances) in all spaces determined by the corresponding norms.

The conventional extremal principle gives necessary conditions for the \emph{extremality} of a collection of sets.

\begin{definition}
[Extremality]
\label{D1.1}
The collection $\{\Omega_1,\ldots,\Omega_n\}$
is extremal at $\bx\in\bigcap_{i=1}^n\Omega_i$ if there is a $\rho\in(0,+\infty]$ such that,
for any $\varepsilon>0$, there exist $a_1,\ldots,a_n\in X$ such that
$\max_{1\le i\le n}\|a_i\|<\varepsilon$ and
$\bigcap_{i=1}^n(\Omega_i-a_i)\cap B_\rho(\bx)=\emptyset$.
\end{definition}

Symbol $B_\rho(\bx)$ in the above definition denotes the open ball with centre $\bx$ and radius $\rho$.
For brevity, we combine in this definition the cases of local ($\rho<+\infty$) and global ($\rho=+\infty$) extremality.
In the latter case, the point $\bx$ plays no role apart from ensuring that $\bigcap_{i=1}^n\Omega_i\ne\es$.
It is easy to see that the property does not change if $B_\rho(\bx)$ is replaced by the corresponding
closed ball $\overline B_\rho(\bx)$.

The extremal principle assumes that $(X,\|\cdot\|)$ is Asplund and the sets are closed, and gives necessary conditions in terms of elements of the (topologically) dual space $X^*$.

\begin{theorem}
[Extremal principle]
\label{EP}
Let $\Omega_1,\ldots,\Omega_n$ %$(n>1)$
be closed subsets of an Asplund space ${(X,\|\cdot\|)}$.
If $\{\Omega_1,\ldots,\Omega_n\}$
is extremal at $\bx\in\bigcap_{i=1}^n\Omega_i$, then,
for any $\varepsilon>0$, there exist $x_i\in\Omega_i\cap B_\eps(\bx)$ and $x_i^*\in X^*$ $(i=1,\ldots,n)$ such that
$d(x_i^*,N^F_{\Omega_i}(x_i))<\eps$
$(i=1,\ldots,n)$, and
\begin{gather}
\label{EP-3}
\sum_{i=1}^n x_i^*=0,\quad
\sum_{i=1}^{n}\|x_i^*\|=1.
%\\
%\notag
%\label{EP-2}
%d(x_i^*,N^F_{\Omega_i}(x_i))<\eps\quad
%(i=1,\ldots,n).
\end{gather}
\end{theorem}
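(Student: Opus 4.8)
The plan is to follow the classical route: reduce extremality to an approximate minimization problem, apply the Ekeland variational principle, and then invoke the Asplund-space fuzzy sum rule for subdifferentials (equivalently, the fuzzy calculus of Fréchet normal cones). First I would fix $\varepsilon>0$ and, using the definition of extremality, pick a $\rho>0$ and then (for a suitably small perturbation parameter $\delta>0$ to be chosen later) vectors $a_1,\dots,a_n$ with $\max_i\|a_i\|<\delta$ and $\bigcap_{i=1}^n(\Omega_i-a_i)\cap\overline B_\rho(\bx)=\emptyset$. The idea is to encode the non-intersection by the function
\begin{equation*}
f(x_1,\dots,x_n):=\Big(\sum_{i=1}^n\|x_i-x_{i+1}-(a_i-a_{i+1})\|^2\Big)^{1/2}\quad\text{(indices mod $n$)},
\end{equation*}
or a similar ``discrepancy'' functional, minimized over $\widehat\Omega\cap(\overline B_\rho(\bx))^n$; the point $(\bx,\dots,\bx)$ is an $O(\delta)$-approximate minimizer since $f(\bx,\dots,\bx)=O(\delta)$ while $f\ge 0$ everywhere, and $f$ cannot vanish on the feasible set by the emptiness of the shifted intersection.

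Next I would apply the Ekeland variational principle on the complete metric space $\widehat\Omega\cap(\overline B_\rho(\bx))^n$ with the product metric, producing a point $(x_1,\dots,x_n)$ close to $(\bx,\dots,\bx)$ (within $O(\sqrt\delta)$, hence inside the open balls $B_\rho(\bx)$ if $\delta$ is small) that exactly minimizes $f(x)+\sqrt\delta\,\vertiii{x-(x_1,\dots,x_n)}$ over the feasible set. At this minimizer one has an unconstrained (modulo the indicator of $\widehat\Omega$) minimum, so $0$ lies in the Fréchet subdifferential of the sum of the smooth-away-from-its-zero term, the Lipschitz penalty, and $\delta_{\widehat\Omega}$. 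Here I must be slightly careful that $f$ need not be smooth at the minimizer; the standard fix is to note that if $f$ vanishes at the minimizer we already have a contradiction, so $f>0$ there and $f$ is strictly differentiable in a neighbourhood, with gradient components of the form $\sum$ of unit-type vectors that do \emph{not} all vanish — this is exactly what will later yield the normalization $\sum_i\|x_i^*\|=1$.

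Then comes the technical heart: apply the \emph{fuzzy sum rule} valid in Asplund spaces to $\delta_{\widehat\Omega}=\sum_{i=1}^n\delta_{\Omega_i}\circ\pi_i$ (projections) together with the differentiable terms. This yields, for a prescribed tolerance, points $x_i'\in\Omega_i$ near $x_i$ and Fréchet normals $x_i^*\in N^F_{\Omega_i}(x_i')$ (up to $\varepsilon$-error in the dual norm) whose sum is the negative of the gradient of the smooth part plus a term of dual norm $\le\sqrt\delta$, i.e. $\sum_i x_i^*\to 0$, while the total norm $\sum_i\|x_i^*\|$ is bounded away from $0$ and can be rescaled to $1$. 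Choosing $\delta$ small enough relative to $\varepsilon$ delivers $d(x_i^*,N^F_{\Omega_i}(x_i))<\varepsilon$, $x_i\in\Omega_i\cap B_\varepsilon(\bx)$, $\sum_i x_i^*=0$ and $\sum_i\|x_i^*\|=1$ after normalization. The main obstacle, and the step that the paper's claim-by-claim decomposition is evidently designed to isolate, is the interplay between the (arbitrary) product norm $\vertiii{\,\cdot\,}$ used in the Ekeland step and the dual norm estimates in the sum rule: one must track how the chosen product norm on $X^n$ affects the constants so that the perturbation and the fuzzy-calculus errors can both be absorbed into a single $\varepsilon$, and so that the normalization $\sum_i\|x_i^*\|=1$ survives rather than some other product-norm normalization. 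I expect the subsequent proof to do precisely this bookkeeping, with the $\rho\in(0,+\infty]$ / global case handled by simply dropping the ball constraint.
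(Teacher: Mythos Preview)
Your approach is essentially correct and follows the classical Mordukhovich route, but note that the paper does not prove Theorem~\ref{EP} directly: it records that Theorem~\ref{EP} is an immediate consequence of the Zheng--Ng unified separation theorem (Theorem~\ref{ZhNg}), which is itself the max-norm instance of the paper's main result, Theorem~\ref{T3.3}. The underlying mechanics of Theorem~\ref{T3.3} are the same as yours---Ekeland on $\widehat\Omega$ followed by the (Clarke or fuzzy Fr\'echet) sum rule---but the discrepancy functional is different: the paper takes $f_1(\hat u)=\vertiii{(u_1-u_n,\ldots,u_{n-1}-u_n)}$ with one distinguished index (when specialized to the extremal principle, the distinguished $(n{+}1)$st set is the added ball $\overline B_\rho(\bx)$, as in Corollary~\ref{C3.3}), whereas you use a symmetric cyclic $\ell^2$ sum. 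The asymmetric choice buys cleaner bookkeeping: the convex subdifferential of $f_1$ at any point where $f_1>0$ delivers $\sum_i x_i^*=0$ and the normalization $\vertiii{(x_1^*,\ldots,x_{n-1}^*)}=1$ \emph{exactly} (Claim~3\,(a) in the proof of Theorem~\ref{T3.3}), so no post-hoc rescaling is needed, and it is precisely this structure that lets the paper replace the max norm by an arbitrary product norm. One small gap in your sketch: if your $f$ vanishes at a feasible $(x_1,\ldots,x_n)$, the common value $x=x_i-a_i$ lies only in $B_{\rho+\delta}(\bx)$, not in $\overline B_\rho(\bx)$, so non-intersection is not immediately contradicted; this is harmless but requires shrinking the feasibility radius by $\delta$ before invoking the hypothesis.
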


Theorem~\ref{EP} employs \emph{\Fr\ normal cones};
see the definition in Section~\ref{Prel}.
The dual conditions are formulated in a \emph{fuzzy} form and can be interpreted as \emph{generalized separation}; cf. \cite{BuiKru18,BuiKru19}.
The statement naturally yields the limiting version of the extremal principle in terms of \emph{limiting normal cones} \cite{KruMor80,Kru85.1,MorSha96,Kru03, Mor06.1}.
In finite dimensions, the limiting version comes for free, while in infinite dimensions, additional \emph{sequential normal compactness} type assumptions are required to ensure that the second equality in \eqref{EP-3} is preserved when passing to limits; cf. \cite{Mor06.1}.

The extremality property in Definition~\ref{D1.1} embraces many conventional and generalized optimality notions.
It assumes neither convexity of the sets nor that any of the sets has nonempty interior.
The generalized separation conditions in Theorem~\ref{EP} naturally translate into necessary
optimality conditions (multiplier rules) and various subdifferential and coderivative calculus results in nonconvex settings \cite{KruMor80,Kru85.1,Kru03,Mor06.1,Mor06.2,BorZhu05}.

The conventional extremal principle was established in this form in \cite{MorSha96} as an extension of the original result from \cite{KruMor80}, which had been formulated in \emph{\Fr\ smooth} spaces and,
{by analogy with \cite{DubMil65},}
referred to as the \emph{generalized Euler equation}.
The proof employs two fundamental results of variational analysis: the \emph{\EVP} \cite{Eke74} and fuzzy \emph{\Fr\ subdifferential sum rule} due to Fabian \cite{Fab89}; see Lemmas~\ref{EVP} and \ref{SR}\,\eqref{SR.Fr}.
Recall that a Banach space is {Asplund} if every continuous convex function on an open convex set is Fr\'echet differentiable on a dense subset, or equivalently, if the dual of each its separable subspace is separable.
We refer the reader to \cite{Phe93,Mor06.1,BorZhu05} for discussions about and characterizations of Asplund spaces.
All reflexive, particularly, all finite dimensional Banach spaces are Asplund.
It was also shown in \cite{MorSha96} that the necessary conditions in Theorem~\ref{EP} are actually equivalent to the Asplund property of the space (see also \cite[Theorem~2.20]{Mor06.1}).

The statement can be easily extended to general Banach spaces.
This only requires replacing the application of the \Fr\ subdifferential sum rule in the conventional proof by the Clarke subdifferential sum rule \cite{Cla83}.
Other normal cones corresponding to subdifferentials possessing a satisfactory calculus in Banach spaces can be employed instead of the Clarke ones, e.g., Ioffe normal cones \cite{Iof17}; cf. \cite[Remark~2.1\,(iii)]{BuiKru19}.

The key point of Definition~\ref{D1.1} of extremality is the replacement of the given  collection $\{\Omega_1,\ldots,\Omega_n\}$ with the property $\bigcap_{i=1}^n\Omega_i\ne\es$ by another collection $\{\Omega_1',\ldots,\Omega_{n+1}'\}$ satisfying $\bigcap_{i=1}^{n+1}\Omega_i'=\es$.
The first $n$ sets of the new collection are linear translations of the original ones: $\Omega_i':=\Omega_i-a_i$ $(i=1,\ldots,n)$, and the last one is a \nbh\ of the given point $\bx$: $\Omega_{n+1}':=B_{\rho}(\bx)$.
(To ensure that all new sets are closed when the original sets are, it may be convenient to use a closed \nbh\
$\overline B_{\rho}(\bx)$.)

The proof of the extremal principle is applicable with minor changes to more general deformations of the original sets than linear translations.
This observation allowed Mordukhovich et al. \cite{MorTreZhu03} (see also \cite{BorZhu05,Mor06.2}) to present a more flexible extended version of the extremal principle with the definition of extremality using deformations of the sets by set-valued mappings.
This extension has proved useful when studying various multiobjective problems \cite{MorTreZhu03, ZheNg06,LiNgZhe07,Bao14.2}.

As noted above, the core of the proof of the extremal principle in \cite{KruMor80,MorSha96,MorTreZhu03} (as well as in numerous subsequent versions) consists of deriving dual necessary conditions for non-intersection of a certain collection of closed sets.
It was observed by Zheng \& Ng in \cite{ZheNg05.2} that such conditions can be of interest on their own as powerful tools for proving various `extremal' results (see, e.g., the \emph{sequential extremal principle} in \cite{CuoKru}).
They formulated the core of the proof of the extremal principle as two abstract non-inter\-section lemmas
%from which the assertions in Theorem~\ref{EP} follow immediately
and demonstrated their other applications in optimization.
This fruitful idea has been further refined in the \emph{unified separation theorem} in \cite{ZheNg11}.
The next theorem basically combines the statements of \cite[Theorems~3.1 and 3.4]{ZheNg11}.
Note that, unlike Theorem~\ref{EP}, the number $\eps$ in the next theorem is fixed.

\begin{theorem}
\label{ZhNg}
Let $\Omega_1,\ldots,\Omega_n$ %$(n>1)$
be closed subsets of a Banach space ${(X,\|\cdot\|)}$,
$\omega_i\in\Omega_i$ $(i=1,\ldots,n)$,
$\eps>0$ and $\de>0$.
Suppose that
$\bigcap_{i=1}^n\Omega_i=\emptyset$ and
\begin{gather}
\label{T3.1-1}
\max_{1\le i\le n-1} \|\omega_i-\omega_n\| <\inf_{u_i\in\Omega_i\;(i=1,\ldots,n)}\max_{1\le i\le n-1}\|u_i-u_n\| +\varepsilon.
\end{gather}
The following assertions hold true:
\begin{enumerate}
\item
there exist $x_i\in\Omega_i\cap B_\de(\omega_i)$ and $x_i^*\in X^*$ $(i=1,\ldots,n)$ such that
\begin{gather}
\label{ZhNg-2}
\sum_{i=1}^n x_i^*=0,\quad
\sum_{i=1}^{n-1}\|x_i^*\|=1,
\\
\label{ZhNg-3}
\sum_{i=1}^nd\left(x_i^*,N^C_{\Omega_i}(x_i)\right) <\frac{\eps}{\de},
\\
\label{ZhNg-4}
\sum_{i=1}^{n-1}\langle x^*_i,x_n-x_i\rangle =\max_{1\le i \le n-1} \norm{x_n-x_i};
\end{gather}
\item
if ${(X,\|\cdot\|)}$ is Asplund,
then, for any $\tau\in(0,1)$, there exist $x_i\in\Omega_i\cap B_\de(\omega_i)$ and $x^*_i\in X^*$ $(i=1,\ldots,n)$ such that conditions \eqref{ZhNg-2} are satisfied, and
\begin{gather}
\label{ZhNg-5}
\sum_{i=1}^nd\left(x_i^*,N^F_{\Omega_i}(x_i)\right) <\frac{\eps}{\de},
\\
\label{ZhNg-6}
\sum_{i=1}^{n-1}\langle x^*_i,x_n-x_i\rangle >\tau\max_{1\le i \le n-1}\norm{x_n-x_i}.
\end{gather}
\end{enumerate}
\end{theorem}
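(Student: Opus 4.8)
The plan is to handle both assertions by a single Ekeland--Fermat argument on the product space $X^n$, the two cases differing only in the subdifferential sum rule invoked. On $X^n$ I would use the product norm $\vertiii{u}:=\max_{1\le i\le n}\norm{u_i}$ for $u=(u_1,\dots,u_n)$; its dual norm is $\vertiii{u^*}=\sum_{i=1}^n\norm{u^*_i}$, and it is precisely this choice that makes the bounds $\eps/\de$ and the normalization $\sum_{i=1}^{n-1}\norm{x_i^*}=1$ come out. Put $\varphi\colon X^n\to\R$, $\varphi(u):=\max_{1\le i\le n-1}\norm{u_i-u_n}$; this function is convex, $2$-Lipschitz with respect to $\vertiii{\cdot}$, and vanishes exactly when $u_1=\dots=u_n$, so $\varphi>0$ on $\widehat\Omega$ because $\bigcap_{i=1}^n\Omega_i=\es$. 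Condition~\eqref{T3.1-1} reads $\varphi(\omega)<\inf_{\widehat\Omega}\varphi+\eps$ with $\omega:=(\omega_1,\dots,\omega_n)\in\widehat\Omega$, i.e.\ $\omega$ is an $\eps$-minimizer of the lower semicontinuous, bounded-below function $\varphi+i_{\widehat\Omega}$ (the indicator of $\widehat\Omega$, which is closed) on the Banach space $(X^n,\vertiii{\cdot})$. Exploiting the slack in the strict inequality \eqref{T3.1-1}, I would apply the \EVP\ (Lemma~\ref{EVP}) with parameters arranged so that the resulting penalty coefficient is some $\la_0<\eps/\de$ and the displacement is strictly below $\de$ with a little room to spare. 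This yields $\hat x=(x_1,\dots,x_n)\in\widehat\Omega$ with $\vertiii{\hat x-\omega}<\de$ (hence $x_i\in\Omega_i\cap B_\de(\omega_i)$), with $\varphi(\hat x)>0$, and such that $\hat x$ minimizes $u\mapsto\varphi(u)+\la_0\vertiii{u-\hat x}$ over $\widehat\Omega$.

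The second ingredient is the form of the convex subdifferential of $\varphi$. Writing $\varphi=\max_{1\le i\le n-1}g_i$ with $g_i(u):=\norm{u_i-u_n}$, the subdifferential of a finite maximum together with the chain rule for $\norm{\cdot}$ shows that every $\xi\in\partial\varphi(u)$ has the form $\xi=\sum_{i\in I(u)}\la_i\,L_i^*v_i$, where $I(u):=\set{i\le n-1:\norm{u_i-u_n}=\varphi(u)}$ is the active set, $\la_i\ge0$, $\sum_{i\in I(u)}\la_i=1$, $v_i\in\partial\norm{\cdot}(u_i-u_n)$, and $L_i^*v_i\in(X^*)^n$ has $v_i$ in position $i$, $-v_i$ in position $n$, and $0$ elsewhere. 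When $\varphi(u)>0$ two facts follow, which I would record: (a) each active $u_i-u_n$ is nonzero, so $\norm{v_i}=1$, whence $\sum_{i=1}^{n-1}\norm{\xi_i}=\sum_{i\in I(u)}\la_i=1$ and $\sum_{i=1}^n\xi_i=0$; and (b) since $\ang{v_i,u_i-u_n}=\norm{u_i-u_n}=\varphi(u)$ for $i\in I(u)$, the alignment identity $\sum_{i=1}^{n-1}\ang{-\xi_i,\,u_n-u_i}=\sum_{i\in I(u)}\la_i\norm{u_i-u_n}=\varphi(u)=\max_{1\le i\le n-1}\norm{u_n-u_i}$ holds.

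For assertion~(i), the generalized Fermat rule at the minimizer $\hat x$ combined with the Clarke subdifferential sum rule \cite{Cla83} — using that the Clarke subdifferential agrees with the convex one for the convex Lipschitz functions $\varphi$ and $\la_0\vertiii{\cdot-\hat x}$, and the product formula $N^C_{\widehat\Omega}(\hat x)=N^C_{\Omega_1}(x_1)\times\dots\times N^C_{\Omega_n}(x_n)$ — produces $\xi\in\partial\varphi(\hat x)$, $\eta\in(X^*)^n$ with $\sum_{i=1}^n\norm{\eta_i}=\vertiii{\eta}\le\la_0$, and $\nu_i\in N^C_{\Omega_i}(x_i)$ such that $\xi_i+\eta_i+\nu_i=0$ for all $i$. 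Setting $x_i^*:=-\xi_i$: fact~(a) gives \eqref{ZhNg-2}; fact~(b) evaluated at $u=\hat x$ gives \eqref{ZhNg-4} as an exact equality; and $x_i^*-\nu_i=\eta_i$ together with $\vertiii{\eta}\le\la_0<\eps/\de$ gives $\sum_{i=1}^n d(x_i^*,N^C_{\Omega_i}(x_i))\le\sum_{i=1}^n\norm{\eta_i}<\eps/\de$, i.e.\ \eqref{ZhNg-3}.

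For assertion~(ii) the scheme is identical, but since $X$ is Asplund the exact Clarke sum rule is replaced by the fuzzy \Fr\ subdifferential sum rule (Lemma~\ref{SR}\,\eqref{SR.Fr}), applied to the three summands $\varphi$, $\la_0\vertiii{\cdot-\hat x}$, $i_{\widehat\Omega}$ at the point $\hat x$ (where $0$ lies in the \Fr\ subdifferential of their sum, by the Fermat rule). Given $\tau\in(0,1)$ and a small tolerance $\ga>0$, this produces points $u^1,u^2,u^3$ within $\ga$ of $\hat x$ with $u^3\in\widehat\Omega$, together with $\xi\in\partial\varphi(u^1)$, $\eta$ with $\vertiii{\eta}\le\la_0$, and $\nu_i\in N^F_{\Omega_i}(u^3_i)$ (using the product formula for \Fr\ normal cones) such that $\sum_{i=1}^n\norm{\xi_i+\eta_i+\nu_i}<\ga$. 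Put $x_i:=u^3_i\in\Omega_i\cap B_\de(\omega_i)$ and $x_i^*:=-\xi_i$; for $\ga$ small one has $\varphi(u^1)>0$, so fact~(a) again gives the exact equalities \eqref{ZhNg-2}, while $\sum_{i=1}^n d(x_i^*,N^F_{\Omega_i}(x_i))\le\sum_{i=1}^n\norm{\eta_i}+\sum_{i=1}^n\norm{\xi_i+\eta_i+\nu_i}<\la_0+\ga<\eps/\de$, which is \eqref{ZhNg-5}. Finally, \eqref{ZhNg-6} is obtained by transferring the exact identity $\sum_{i=1}^{n-1}\ang{x_i^*,u^1_n-u^1_i}=\varphi(u^1)$ of fact~(b) at $u^1$ to the points $x_i=u^3_i$: the transfer costs at most $4\vertiii{u^1-u^3}\le8\ga$ (using $\sum_{i=1}^{n-1}\norm{x_i^*}=1$ and the $2$-Lipschitz property of $\varphi$), so $\sum_{i=1}^{n-1}\ang{x_i^*,x_n-x_i}\ge\varphi(u^3)-8\ga=\max_{1\le i\le n-1}\norm{x_n-x_i}-8\ga$; since $\varphi(\hat x)>0$ is already fixed and $\varphi(u^3)\ge\varphi(\hat x)-2\ga$, choosing $\ga$ small enough relative to $(1-\tau)\varphi(\hat x)$ makes the right-hand side strictly exceed $\tau\max_{1\le i\le n-1}\norm{x_n-x_i}$. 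The main obstacle is exactly this fuzzy bookkeeping: because the \Fr\ sum rule evaluates $\partial\varphi$, the penalty term and the normal cones at three distinct (though arbitrarily close) points, the exact alignment equality \eqref{ZhNg-4} of the Clarke case inevitably weakens to \eqref{ZhNg-6} with $\tau<1$, and absorbing all the $O(\ga)$ discrepancies hinges on $\varphi$ being bounded away from $0$ near $\hat x$ — which is guaranteed by $\hat x\in\widehat\Omega$ and $\bigcap_{i=1}^n\Omega_i=\es$. A minor, purely technical point, handled by the choice of \EVP\ parameters indicated above, is upgrading the non-strict \EVP\ estimates to the strict ball memberships $x_i\in B_\de(\omega_i)$ and the strict inequalities in \eqref{ZhNg-3} and \eqref{ZhNg-5}.
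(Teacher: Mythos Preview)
Your proposal is correct and follows essentially the same route as the paper. The paper does not prove Theorem~\ref{ZhNg} directly; it derives it (Remark~\ref{R3}) from the more general Theorem~\ref{T3.3}, whose proof is exactly your Ekeland--Fermat scheme on $X^n$: define $f_1=\varphi$, apply the \EVP\ with slack $\eps'<\eps$ to land at a point where $f_1>0$, record the structure of $\partial f_1$ (your facts (a) and (b) are the paper's Claim~3(a)), then split via the Clarke sum rule for part~(i) and the fuzzy \Fr\ sum rule for part~(ii), with the same transfer-of-alignment bookkeeping controlled by a small $\xi$ chosen against $(1-\tau)f_1(\hat x^\circ)$. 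The only cosmetic difference is that the paper groups the two convex Lipschitz summands $f_1,f_2$ first (via Lemma~\ref{SR}\,\eqref{SR.convex}) before invoking the two-function fuzzy rule, yielding two nearby points rather than your three; this changes nothing of substance.
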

\if{
\AK{13/09/24.
To discuss the ``non-intersect index'' and the ``additional conditions'' in Theorem~\ref{ZhNg}.
Anything else from \cite{ZheNg11} to be mentioned?
Have there been more advances after \cite{ZheNg11}?}
\NDC{19/9/24.
A part from the `non-intersect index' and the `additional conditions', they applied the results to `approximate projections' and set-valued optimization.
The paper \cite{ZheYanZou17} studies an exact form of Theorem~\ref{ZhNg} under a compactness assumption.}
}\fi

\begin{remark}
\label{R1}
\begin{enumerate}
\item
\label{R1.1}
The expression $\inf\limits_{u_i\in\Omega_i\;(i=1,\ldots,n)} \max\limits_{1\le i\le n-1}\|u_i-u_n\|$ in the \RHS\ of \eqref{T3.1-1} corresponds to taking $p=+\infty$ in the definition of the \emph{$p$-weighted nonintersect index} \cite[p.~890]{ZheNg11} of $\Omega_1,\ldots,\Omega_n$:
\sloppy
\begin{gather*}
\ga_p(\Omega_1,\ldots,\Omega_n):=
\inf_{u_i\in\Omega_i\;(i=1,\ldots,n)} \Big(\sum_{i=1}^{n-1}\|u_i-u_n\|^p\Big)^{\frac1p}.
\end{gather*}
It is easy to see that $\ga_p(\Omega_1,\ldots,\Omega_n)=0$ when $\bigcap_{i=1}^n\Omega_i\ne\emptyset$.
%We restrict the presentation to the case $p=+\infty$ for simplicity.
Employing the general $p$-wei\-gh\-ted index leads to obvious changes in the statement: replacing the maximum of the norms and the sums of the distances by the, respectively, $p$-weighted and $q$-weighted sums with $\frac1p+\frac1q=1$.
\item
Conditions \eqref{ZhNg-4} and \eqref{ZhNg-6} relate the dual vectors $x_i^*$ and the primal vectors $x_n-x_{i}$ ${(i=1,\ldots,n-1)}$.
Such conditions, though not common in the conventional formulations of the extremal principle/generalized separation statements, provide some additional characterizations of the properties.
Conditions of this kind first appeared explicitly in the unified separation theorem in \cite{ZheNg11}, where the authors also provided motivations for employing such conditions.
\end{enumerate}
\end{remark}

Theorem~\ref{EP} and its extension in \cite{MorTreZhu03} are immediate consequences of Theorem~\ref{ZhNg}, see \cite[Remark on p.~1161]{ZheNg05.2} and \cite[Remark on p.~897]{ZheNg11}.

The last set in the collection $\{\Omega_1,\ldots,\Omega_n\}$ plays a special role in Theorem~\ref{ZhNg}; see assumption \eqref{T3.1-1}.
(Of course, this can be any of the sets.)
The last set can indeed be special as it can be seen from Definition~\ref{D1.1}, where the last set is a \nbh\ of the given point.
However, this fact is not reflected in conditions $x_i\in\Omega_i\cap B_\de(\omega_i)$ $(i=1,\ldots,n)$, \eqref{ZhNg-3} and \eqref{ZhNg-5}.
A slightly more advanced version of generalized separation has been given (without proof) in \cite[Lemma~2.1]{CuoKruTha24} and employed in \cite{CuoKruTha24,CuoKruTha25} when characterizing extremality of collections of abstract families of sets.

%The next statement is \cite{CuoKruTha24}.

\begin{theorem}
%[Generalized separation]
\label{T1.4}
Let $\Omega_1,\ldots,\Omega_n$ %$(n>1)$
be closed subsets of a Banach space ${(X,\|\cdot\|)}$,
$\omega_i\in\Omega_i$ ${(i=1,\ldots,n)}$, $\eps>0$, $\de>0$ and $\eta>0$.
Suppose that
$\bigcap_{i=1}^n\Omega_i=\emptyset$ and
condition \eqref{T3.1-1} is satisfied.
The following assertions hold true:
\begin{enumerate}
\item
there exist $x_i\in\Omega_i$ and $x_i^*\in X^*$ $(i=1,\ldots,n)$ such that conditions \eqref{ZhNg-2} and \eqref{ZhNg-4} are satisfied, and
\begin{gather}
\label{T1.4-1}
\|x_i-\omega_i\|<\de\;\;
(i=1,\ldots,n-1),\quad
\|x_n-\omega_n\|<\eta,
\\
\label{T5.1-2}
\de\sum_{i=1}^{n-1} d\left(x_i^*,N^C_{\Omega_i}(x_i)\right)+ \eta d\left(x_n^*,N^C_{\Omega_n}(x_n)\right) <\eps;
\end{gather}
\item
if $(X,\|\cdot\|)$ is Asplund, then, for any $\tau\in(0,1)$, there exist $x_i\in\Omega_i$ and $x_i^*\in X^*$ ${(i=1,\ldots,n)}$ such that conditions \eqref{ZhNg-2}, \eqref{ZhNg-6} and \eqref{T1.4-1} are satisfied, and
\begin{gather}\label{T5.1-3}
\de\sum_{i=1}^{n-1} d\left(x_i^*,N^F_{\Omega_i}(x_i)\right)+ \eta d\left(x_n^*,N^F_{\Omega_n}(x_n)\right) <\eps.
\end{gather}
\end{enumerate}
\end{theorem}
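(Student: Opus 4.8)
The plan is to run, on the aggregate set $\widehat\Omega=\Omega_1\times\dots\times\Omega_n$, the classical two-step scheme underlying every proof of the extremal principle — Ekeland's variational principle followed by a subdifferential sum rule — but with $X^n$ carrying the \emph{weighted maximum norm}
\begin{gather*}
\vertiii{(u_1,\dots,u_n)}:=\max\Big\{\frac{\|u_1\|}{\de},\dots,\frac{\|u_{n-1}\|}{\de},\frac{\|u_n\|}{\eta}\Big\},
\end{gather*}
whose dual norm one computes to be $\vertiii{(u_1^*,\dots,u_n^*)}=\de\sum_{i=1}^{n-1}\|u_i^*\|+\eta\|u_n^*\|$. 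It is precisely this choice of product norm that turns the symmetric estimates of Theorem~\ref{ZhNg} into the asymmetric \eqref{T5.1-2} and \eqref{T5.1-3}, with the distinguished last set weighted by $\eta$ and the remaining ones by $\de$; so Theorem~\ref{T1.4} should be read as the specialization to this norm of a general product-norm separation statement.

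First I would set $f(x_1,\dots,x_n):=\max_{1\le i\le n-1}\|x_i-x_n\|$, a convex continuous nonnegative function on $X^n$ with $f(\omega)<\inf_{\widehat\Omega}f+\eps$ by \eqref{T3.1-1}, where $\omega:=(\omega_1,\dots,\omega_n)$. Choose $\eps'\in(0,\eps)$ with $\eps'\ge f(\omega)-\inf_{\widehat\Omega}f$ (if that difference is $0$, any positive $\eps'<\eps$ works), and then $\mu\in(\eps'/\eps,1)$. Since $\widehat\Omega$ is complete for the metric induced by $\vertiii{\,\cdot\,}$, Ekeland's variational principle (Lemma~\ref{EVP}) produces $\bx=(\bx_1,\dots,\bx_n)\in\widehat\Omega$ with $\vertiii{\bx-\omega}\le\mu$ such that $\bx$ minimizes $x\mapsto f(x)+\frac{\eps'}{\mu}\vertiii{x-\bx}$ over $\widehat\Omega$. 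Two facts are used throughout: $\vertiii{\bx-\omega}\le\mu<1$ already forces $\|\bx_i-\omega_i\|<\de$ $(i<n)$ and $\|\bx_n-\omega_n\|<\eta$; and $f(\bx)>0$, because $f(\bx)=0$ would mean $\bx_1=\dots=\bx_n$, a point of $\bigcap_{i=1}^n\Omega_i=\es$.

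For part (i), apply the Clarke subdifferential sum rule to $f+\frac{\eps'}{\mu}\vertiii{\,\cdot-\bx}+\iota_{\widehat\Omega}$ at $\bx$, using $\partial_C f(\bx)=\partial f(\bx)$, $\partial_C\vertiii{\,\cdot-\bx}(\bx)=\{v^*:\vertiii{v^*}\le1\}$, and $N^C_{\widehat\Omega}(\bx)=N^C_{\Omega_1}(\bx_1)\times\dots\times N^C_{\Omega_n}(\bx_n)$. Since $f(\bx)>0$, every element of $\partial f(\bx)$ has the form $\big((\la_iu_i^*)_{i=1}^{n-1},-\sum_{i=1}^{n-1}\la_iu_i^*\big)$ with $\la_i\ge0$, $\sum\la_i=1$, $u_i^*\in\partial\|\cdot\|(\bx_i-\bx_n)$, and $\|u_i^*\|=1$ resp. $\la_i=0$ according as the $i$-th norm is or is not active in the maximum. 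Setting $x_i:=\bx_i$, $x_i^*:=-\la_iu_i^*$ $(i<n)$ and $x_n^*:=\sum_{i=1}^{n-1}\la_iu_i^*$, the normalization $\sum\la_i=1$ yields \eqref{ZhNg-2}; the equalities $\langle u_i^*,\bx_i-\bx_n\rangle=\|\bx_i-\bx_n\|=f(\bx)$ on active indices together with $\sum\la_i=1$ yield \eqref{ZhNg-4}; and writing $v^*$ (with $\vertiii{v^*}\le\eps'/\mu$) for the residual furnished by the sum rule, one gets $d\big(x_i^*,N^C_{\Omega_i}(x_i)\big)\le\|v_i^*\|$ for each $i$, so the left-hand side of \eqref{T5.1-2} is at most $\de\sum_{i=1}^{n-1}\|v_i^*\|+\eta\|v_n^*\|=\vertiii{v^*}\le\eps'/\mu<\eps$. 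For part (ii) the same bookkeeping is run with the fuzzy \Fr\ sum rule (Lemma~\ref{SR}\,\eqref{SR.Fr}) in place of the Clarke one: for a precision $\theta>0$ it delivers points $x^1,x^2,x^3$ within $\theta$ of $\bx$ in $\vertiii{\,\cdot\,}$ with $x^3\in\widehat\Omega$, subgradients $a^*\in\partial f(x^1)$, $v^*$ with $\vertiii{v^*}\le\eps'/\mu$, $w^*\in N^F_{\Omega_1}(x^3_1)\times\dots\times N^F_{\Omega_n}(x^3_n)$, and $\vertiii{a^*+v^*+w^*}<\theta$; one takes $x_i:=x^3_i$ and builds $x_i^*$ from $a^*$ as before. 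The estimate \eqref{T5.1-3} then comes out as $\eps'/\mu+\theta$, the localization \eqref{T1.4-1} needs $\mu+\theta<1$, and the exact alignment degrades to \eqref{ZhNg-6}: transporting $\langle u_i^*,x^1_i-x^1_n\rangle=\|x^1_i-x^1_n\|$ from $x^1$ to $x^3$ costs $O(\theta)$ and $\max_{1\le i\le n-1}\|x^3_i-x^3_n\|$ differs from the fixed positive number $f(\bx)$ by $O(\theta)$, so the ratio $\sum_{i=1}^{n-1}\langle x_i^*,x^3_n-x^3_i\rangle\big/\max_{1\le i\le n-1}\|x^3_n-x^3_i\|$ tends to $1$ as $\theta\to0$ and hence exceeds any prescribed $\tau<1$ once $\theta$ is small.

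The hard part is not any single estimate but the order and smallness of the constants, especially in part (ii): one must fix $\bx$ — hence the positive number $f(\bx)$ — before choosing $\theta$, and then take $\theta$ small enough to satisfy simultaneously $\eps'/\mu+\theta<\eps$ (for \eqref{T5.1-3}), $\mu+\theta<1$ (for \eqref{T1.4-1}), and $\theta$ small relative to $f(\bx)$ (for \eqref{ZhNg-6}); this is feasible only because the slacks $\eps-\eps'>0$ and $1-\mu>0$ were built in at the Ekeland step. I also anticipate having to dispose of the degenerate case $f(\omega)=\inf_{\widehat\Omega}f$ explicitly (it can occur for disjoint but $\inf$-close closed sets), and to emphasize that the exact alignment \eqref{ZhNg-4} in part (i) is available only because the Clarke sum rule introduces no fuzz, whereas the \Fr\ version unavoidably yields the weaker \eqref{ZhNg-6}.
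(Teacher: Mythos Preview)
Your proposal is correct and follows essentially the same route as the paper. The paper does not prove Theorem~\ref{T1.4} directly but derives it (see Remark~\ref{R3}) as the special case of the general product-norm separation Theorem~\ref{T3.3} obtained by taking $\vertiii{\,\cdot\,}$ to be the maximum norm on $X^{n-1}$ and $\vertiii{(u_1,\ldots,u_n)}_+:=\max\{\max_{i<n}\|u_i\|,(\de/\eta)\|u_n\|\}$ on $X^n$; your weighted maximum norm is just $\de^{-1}\vertiii{\,\cdot\,}_+$, and your argument is exactly the proof of Theorem~\ref{T3.3} (Ekeland on $\widehat\Omega$, then Clarke or fuzzy \Fr\ sum rule, with the careful staging of constants you describe) instantiated for these norms. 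Two cosmetic points: you need $\eps'$ \emph{strictly} larger than $f(\omega)-\inf_{\widehat\Omega}f$ for Lemma~\ref{EVP} as stated, and in part~(ii) the paper applies the fuzzy rule once to split off $i_{\widehat\Omega}$ and then the exact convex sum rule to separate $f$ and the Ekeland penalty at the \emph{same} point (so your $x^1$ and $x^2$ coincide), but neither affects the substance.
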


Theorem~\ref{ZhNg} is obviously a particular case of Theorem~\ref{T1.4} with $\de=\eta$.
It is not difficult to establish an $n$-parameter extension of Theorem~\ref{T1.4}.
Below we show that Theorems~\ref{ZhNg} (including its version involving the $p$-weighted nonintersect index mentioned in Remark~\ref{R1}\,\eqref{R1.1}) and \ref{T1.4} as well as their multi-parameter extensions actually work with sets in product spaces and differ in the ways norms on product spaces are defined.
We prove a general statement with arbitrary product norms covering all the mentioned results.
The statement follows the structure of Theorems~\ref{ZhNg} and \ref{T1.4}, clarifying all the conditions involved in them.
The proof follows that of the core part of the conventional extremal principle and exposes the key steps and arguments used when proving generalized separation statements of this kind.

The next Section~\ref{Prel} collects some general definitions and facts used throughout the paper.
In particular, it contains a discussion of compatibility conditions for norms on product spaces.
Section~\ref{S3} is devoted to generalized separation statements with arbitrary product norms covering all the discussed results.
As an application, we prove in Section~\ref{S4} dual necessary (sufficient) conditions for an abstract product norm extension of the approximate stationarity (transversality) property from \cite{Kru98,Kru05,Kru09,KruTha13,BuiKru19,CuoKru21.2}.

\section{Preliminaries}
\label{Prel}
%Our basic notation is standard, see, e.g., \cite{Mor06.1,RocWet98,DonRoc14,Iof17}.
%In the rest of the paper, $\B_{X^*}$ denotes the open unit ball in $X^*$.

\paragraph*{Normal cones and subdifferentials.}
We first recall the definitions of normal cones and subdifferentials in the sense of Fr\'echet and Clarke;
see, e.g., \cite{Cla83,Kru03,Mor06.1}.
Given a subset $\Omega$ of a normed space $X$ and a point $\bx\in \Omega$, the sets
\begin{gather}\label{NC}
N_{\Omega}^F(\bx):= \Big\{x^\ast\in X^\ast\mid
\limsup_{\Omega\ni x{\rightarrow}\bar x,\;x\ne\bx} \frac {\langle x^\ast,x-\bx\rangle}
{\|x-\bx\|} \le 0 \Big\},
\\\label{NCC}
N_{\Omega}^C(\bx):= \left\{x^\ast\in X^\ast\mid
\ang{x^\ast,z}\le0
\;\;
\text{for all}
\;\;
z\in T_{\Omega}^C(\bx)\right\}
\end{gather}
are, respectively, the \emph{Fr\'echet} and \emph{Clarke normal cones} to $\Omega$ at $\bx$.
%The notation $x\stackrel{\Omega}{\rightarrow}\bar x$ in \eqref{NC} means $x \rightarrow \bar x$ and $x \in \Omega \setminus\{\bx\}$, and
Symbol $T_{\Omega}^C(\bx)$
in \eqref{NCC}
stands for the \emph{Clarke tangent cone} to $\Omega$ at $\bx$:
\begin{multline*}%\label{TCC}
T_{\Omega}^C(\bx):= \big\{z\in X\mid
\forall x_k{\rightarrow}\bx,\;x_k\in\Omega,\;\forall t_k\downarrow0,\;\exists z_k\to z
\;\;
\text{with}
\;\;
x_k+t_kz_k\in \Omega
\;\;
\text{for all}
\;\;
k\in\N\big\}.
\end{multline*}
%The Clarke tangent cone is always convex and contains 0.
The sets \eqref{NC} and \eqref{NCC} are nonempty
closed convex cones satisfying $N_{\Omega}^F(\bx)\subset N_{\Omega}^C(\bx)$.
If $\Omega$ is a convex set, they reduce to the normal cone in the sense of convex analysis:
\begin{gather*}\label{CNC}
N_{\Omega}(\bx):= \left\{x^*\in X^*\mid \langle x^*,x-\bx \rangle \leq 0
\;\;
\text{for all}
\;\;	
x\in \Omega\right\}.
\end{gather*}

For an extended-real-valued function $f:X\to\R_\infty:=\R\cup\{+\infty\}$ on a normed space $X$,
its domain and epigraph are defined,
respectively, by
$\dom f:=\{x \in X\mid f(x) < +\infty\}$
and
$\epi f:=\{(x,\alpha) \in X \times \mathbb{R}\mid f(x) \le \alpha\}$.
The \emph{Fr\'echet} and \emph{Clarke subdifferentials} of $f$ at $\bar x\in\dom f$
%(cf. \cite{Kru03,Cla83})
are defined, respectively, by
\begin{gather}\label{SF}
	\sd^F f(\bar x):= \left\{x^* \in X^*\mid (x^*,-1) \in N^F_{\epi f}(\bar x,f(\bar x))\right\},
	\\ \label{SC}
	\partial^C{f}(\bx):= \left\{x^\ast\in X^\ast\mid
	(x^*,-1)\in N_{\epi f}^C(\bx,f(\bx))\right\}.
\end{gather}
The sets \eqref{SF} and \eqref{SC} are closed and convex, and satisfy
$\partial^F{f}(\bx)\subset\partial^C{f}(\bx)$.
If $f$ is convex, they
reduce to the subdifferential $\partial{f}(\bx)$ in the sense of convex analysis.
%In this case, we simply write $\partial{f}(\bx)$.
We set $N_{\Omega}^F(\bx)=N_{\Omega}^C(\bx):=\es$ if $\bx\notin \Omega$ and $\partial^F{f}(\bx)=\partial^C{f}(\bx):=\es$ if $\bx\notin\dom f$.
It holds: $N_{\Omega}^F(\bx)=\partial^Fi_\Omega(\bx)$ and $N_{\Omega}^C(\bx)=\partial^Ci_\Omega(\bx)$, where $i_\Omega$ is the \emph{indicator function} of $\Omega$: $i_\Omega(x)=0$ if $x\in \Omega$ and $i_\Omega(x)=+\infty$ if $x\notin \Omega$.

\paragraph*{Ekeland variational principle and subdifferential sum rules.}
The main tools used in the proofs of generalized separation results are
the celebrated Ekeland variational principle and conventional subdifferential sum rules; see, e.g.,
\cite{Eke74,Cla83,Fab89,Kru03,Mor06.1}.

\begin{lemma}\label{EVP}
Let $(X,d)$ be a complete metric space, $f: X\to\R_{\infty}$ be lower semicontinuous, $x_0\in X$, $\varepsilon>0$ and $\lambda>0$.
If $f(x_0)<\inf_{X} f+\varepsilon$, then there exists an $x\in X$ such that
$d(x,x_0)<\lambda$,
$f(x)\le f(x_0)$, and
$f(u)+(\varepsilon/\lambda)d(u,x){\ge f(x)}$ for all $u\in X.$
\end{lemma}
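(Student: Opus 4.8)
The statement is the Ekeland variational principle, so the plan is to run the standard partial-order argument. Write $\alpha:=\varepsilon/\lambda$ for the perturbation weight, and note that the hypothesis $f(x_0)<\inf_X f+\varepsilon$ forces $\inf_X f>-\infty$ and $f(x_0)<+\infty$, so $f$ is proper and bounded below. On $X$ I would introduce the \emph{Ekeland order} $u\preceq v$ defined by $f(u)+\alpha\,d(u,v)\le f(v)$, and first check that it is a genuine partial order: reflexivity is immediate; antisymmetry follows by adding $f(u)+\alpha\,d(u,v)\le f(v)$ and $f(v)+\alpha\,d(u,v)\le f(u)$ to obtain $2\alpha\,d(u,v)\le0$, hence $u=v$; and transitivity follows from the triangle inequality for $d$. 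The one structural fact I would record for later use is that each lower section $S(v):=\{u\in X:u\preceq v\}$ contains $v$ and is closed, the latter because $u\mapsto f(u)+\alpha\,d(u,v)$ is \lsc\ ($f$ being \lsc\ and $d(\cdot,v)$ continuous).

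Next I would build a decreasing sequence of sections whose diameters vanish. Starting from $x_0$, and having chosen $x_n$, I pick $x_{n+1}\in S(x_n)$ with $f(x_{n+1})<\inf_{S(x_n)}f+2^{-(n+1)}$. Transitivity gives $S(x_{n+1})\subseteq S(x_n)$, and for any $u\in S(x_{n+1})$ the defining inequality yields $\alpha\,d(u,x_{n+1})\le f(x_{n+1})-f(u)\le f(x_{n+1})-\inf_{S(x_n)}f<2^{-(n+1)}$, where $f(u)\ge\inf_{S(x_n)}f$ because $S(x_{n+1})\subseteq S(x_n)$. Hence $\diam S(x_{n+1})\le 2^{-n}/\alpha\to0$. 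The sections are nonempty, closed, nested, with vanishing diameter, so completeness of $(X,d)$ together with Cantor's intersection theorem produces a single point $x$ with $\bigcap_{n}S(x_n)=\{x\}$.

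Finally I would read off the conclusions from $x\in S(x_0)$, i.e. $x\preceq x_0$. This gives $f(x)+\alpha\,d(x,x_0)\le f(x_0)$, whence $f(x)\le f(x_0)$ and $\alpha\,d(x,x_0)\le f(x_0)-f(x)\le f(x_0)-\inf_X f<\varepsilon$, so $d(x,x_0)<\varepsilon/\alpha=\lambda$, the strict inequality coming directly from the strict hypothesis. For the variational property, suppose $u\preceq x$; by transitivity $u\preceq x_n$ for every $n$, so $u\in\bigcap_n S(x_n)=\{x\}$ and $u=x$. Thus no $u\ne x$ satisfies $f(u)+\alpha\,d(u,x)\le f(x)$, which is precisely $f(u)+(\varepsilon/\lambda)\,d(u,x)\ge f(x)$ for all $u\in X$, with strict inequality unless $u=x$; that is, $x$ is a strict minimizer of the perturbed functional $f+(\varepsilon/\lambda)\,d(\cdot,x)$, the minimality conclusion of the lemma.

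I expect the genuinely delicate step to be the vanishing of $\diam S(x_n)$: it is what upgrades the nested-closed-sets picture from a merely nonempty intersection to a single point, and hence what makes the minimality hold at the limit point rather than only along the approximating sequence. Everything else is bookkeeping with the triangle inequality and lower semicontinuity. An alternative to the explicit $2^{-n}$ near-minimizer choice is to invoke a maximality principle for $\preceq$, but the quantitative construction keeps the argument elementary and self-contained, needing only completeness of $(X,d)$.
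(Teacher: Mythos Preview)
The paper does not prove Lemma~\ref{EVP}; it is quoted as a standard tool with a reference to Ekeland \cite{Eke74}. Your argument is the classical Bishop--Phelps/Br\'ezis--Browder partial-order proof and is correct in every step: the order $\preceq$, the closedness of lower sections via lower semicontinuity, the $2^{-n}$ near-minimizer choice forcing $\diam S(x_n)\to0$, and the Cantor intersection yielding a single limit point are all handled cleanly.

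One point deserves comment. The lemma as printed in the paper asks for $f(u)+(\varepsilon/\lambda)d(u,x)\ge f(x_0)$, whereas you establish $f(u)+(\varepsilon/\lambda)d(u,x)\ge f(x)$. Your version is the correct one: the inequality with $f(x_0)$ on the right is in general false (take $u=x$ to force $f(x)=f(x_0)$, which need not be achievable), and the paper's own application of the lemma in the proof of Theorem~\ref{T3.3} uses precisely the $\ge f(\hat x^\circ)$ form, i.e., your version. So the discrepancy is a typo in the stated lemma, not a gap in your proof.
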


\begin{lemma}\label{SR}
Let $(X,\|\cdot\|)$ be a Banach space, $f_1,f_2\colon X\to\R_\infty$,
and $\bar x\in\dom f_1\cap\dom f_2$.
Suppose that $f_1$ is Lipschitz continuous near $\bar x$
and $f_2$ is lower semicontinuous near $\bar x$.
Then
\begin{enumerate}
\item
\label{SR.Cl}
$\sd^C(f_1+f_2)(\bar x)\subset \sd^Cf_1(\bar x)+\sd^C f_2(\bar x)$;
\item
\label{SR.convex}
if $f_1$ and $f_2$ are convex, then
$\sd(f_1+f_2)(\bar x)=\sd f_1(\bar x)+\sd f_2(\bar x)$;
\item
\label{SR.Fr}
if $(X,\|\cdot\|)$ is Asplund, then, for any $x^*\in{\sd^F}(f_1+f_2)(\bar x)$ and $\varepsilon>0$, there exist $x_1,x_2\in X$
such that $\|x_i-\bx\|<\varepsilon$, $|f_i(x_i)-f_i(\bx)|<\varepsilon$ $(i=1,2)$, and
$d\big(x^*,\partial^F f_1(x_1)+\partial^F f_2(x_2)\big){<\varepsilon}.$
\end{enumerate}
\end{lemma}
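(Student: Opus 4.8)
I would establish the three assertions separately; the first two are routine convex and nonsmooth calculus, whereas the fuzzy \Fr\ rule is where the real work lies.

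\emph{The convex identity (Moreau--Rockafellar).} The inclusion ``$\supseteq$'' is immediate from the definition of the convex subdifferential, so only ``$\subseteq$'' needs an argument, which I would set up by separation. Fix $x^*\in\sd(f_1+f_2)(\bx)$ and put $g_1:=f_1-\ang{x^*,\cdot}$, so that $\bx$ minimizes the convex function $g_1+f_2$ on $X$. Since $f_1$ is Lipschitz continuous near $\bx$, it is finite and continuous there, hence the convex set $\epi(g_1-g_1(\bx))$ in $X\times\R$ has nonempty interior; by the minimality of $\bx$ this interior does not meet the convex set $\hyp\bigl(f_2(\bx)-f_2\bigr)$, so the two can be separated by a closed hyperplane through the origin. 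The interiority forces the ``vertical'' coordinate of the separating functional to be nonzero, and normalizing it to $-1$ yields $y^*\in\sd g_1(\bx)$ with $-y^*\in\sd f_2(\bx)$, that is, $x^*\in\sd f_1(\bx)+\sd f_2(\bx)$.

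\emph{The Clarke inclusion.} Here I would argue through the epigraphical definition $\sd^Cf(\bx)=\{x^*\in X^*\mid(x^*,-1)\in N^C_{\epi f}(\bx,f(\bx))\}$. The equivalence $(x,\alpha)\in\epi(f_1+f_2)\Leftrightarrow(x,\alpha-f_1(x))\in\epi f_2$ shows that $\Phi(x,\alpha):=(x,\alpha-f_1(x))$ is a homeomorphism of $X\times\R$, bi-Lipschitz precisely because $f_1$ is Lipschitz near $\bx$, which carries $\epi(f_1+f_2)$ onto $\epi f_2$ and sends $(\bx,(f_1+f_2)(\bx))$ to $(\bx,f_2(\bx))$. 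Applying Clarke's transformation rule for normal cones under Lipschitz mappings to $\Phi$, and noting that the generalized Jacobian of $\Phi$ brings in $\sd^Cf_1(\bx)$ in exactly the way that splits the ``$-1$''-component additively, I would obtain $(x^*,-1)\in N^C_{\epi f_1}(\bx,f_1(\bx))+N^C_{\epi f_2}(\bx,f_2(\bx))$, which is the asserted inclusion. (Alternatively, one can invoke the generalized directional derivative of the Lipschitz summand $f_1$ together with the convex sum rule for the support functions of $\sd^Cf_1(\bx)$.)

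\emph{The fuzzy \Fr\ rule (Fabian) --- the substantive part.} Fix $x^*\in\sd^F(f_1+f_2)(\bx)$ and $\eps>0$; replacing $f_1$ by $f_1-\ang{x^*,\cdot}$ (still Lipschitz near $\bx$) we may assume $x^*=0$. By the definition of the \Fr\ subdifferential, after shrinking a neighbourhood, $\bx$ minimizes $f_1+f_2+\frac{\eps}{4}\norm{\cdot-\bx}$ on a closed ball up to an additive constant and an $o(\norm{\cdot-\bx})$ error. I would then: (a) pass to the separable case by the standard separable-reduction technique for Asplund spaces, so that $X$ may be renormed with an equivalent \Fr\ smooth norm, whose square serves as a smooth penalty; (b) apply a smooth (Borwein--Preiss type) variational principle to trade the nonsmooth penalty and the $o(\cdot)$ error for an exact local minimum of $f_1+f_2+g$ near $\bx$, with $g$ of class $C^1$ and $\norm{\nabla g}$ as small as desired; (c) decouple the summands by minimizing $(x,y)\mapsto f_1(x)+f_2(y)+\beta\norm{x-y}^2+g\bigl(\frac{x+y}{2}\bigr)$, plus a localizing term, over a product of small closed balls for large $\beta$, using lower semicontinuity of $f_1,f_2$ and the \EVP\ to get a near-minimizing pair $(x_1,x_2)$ with $\norm{x_i-\bx}<\eps$, $\norm{x_1-x_2}$ small and $\abs{f_i(x_i)-f_i(\bx)}<\eps$; (d) express the first-order optimality of this pair separately in $x$ and in $y$ --- legitimate because the penalty $\beta\norm{x-y}^2$, the function $g$ and the localizing term are smooth --- obtaining $u_1^*\in\sd^Ff_1(x_1)$ and $u_2^*\in\sd^Ff_2(x_2)$ whose sum, the two penalty-derivative contributions cancelling, reduces to minus the derivative of $g$ and of the localizing term at the relevant point, so that $\norm{u_1^*+u_2^*}$, and hence $d\bigl(0,\sd^Ff_1(x_1)+\sd^Ff_2(x_2)\bigr)$, is below $\eps$. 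The main obstacle is precisely this last part: unlike the convex and Clarke cases, which are routine subdifferential calculus, the fuzzy \Fr\ rule rests on the full Asplund-space machinery --- separable reduction together with a smooth variational principle --- and on the variational decoupling of the two summands.
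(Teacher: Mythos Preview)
The paper does not prove this lemma: it is stated in the Preliminaries section as a known tool, with the phrase ``see, e.g.,'' followed by references to Ekeland, Clarke, Fabian, Kruger and Mordukhovich standing in for a proof. So there is no in-paper argument to compare your proposal against.

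Your outlines are nonetheless consistent with the standard literature proofs the paper is pointing to. The Moreau--Rockafellar argument via separation of $\epi(g_1-g_1(\bx))$ and $\hyp(f_2(\bx)-f_2)$ is the classical one and is fine as written. For the Clarke inclusion, the bi-Lipschitz change of variables $\Phi(x,\alpha)=(x,\alpha-f_1(x))$ carrying $\epi(f_1+f_2)$ onto $\epi f_2$ is a legitimate route --- and indeed the natural one here, since $f_2$ is only \lsc\ and the directional-derivative calculus is unavailable for it --- though the phrase ``splits the $-1$-component additively'' is where the real computation hides and would need to be unpacked in a full proof. For the fuzzy Fr\'echet rule your plan --- separable reduction to obtain an equivalent Fr\'echet-smooth norm, a Borwein--Preiss smooth variational principle, then penalized decoupling in $X\times X$ with the cancellation $\nabla_x\be\norm{x-y}^2+\nabla_y\be\norm{x-y}^2=0$ --- is exactly Fabian's strategy as reproduced in the cited monographs. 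Nothing in your sketches is wrong in spirit; they simply supply more than the paper itself does.
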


\paragraph*{Product norms.}
As discussed in the Introduction, the proofs of the conventional extremal principle and its extensions involve working with norms and distances in product spaces.
They are also implicitly involved in the definitions and statements.
For instance, Definition~\ref{D1.1} and Theorem~\ref{EP} (implicitly) employ the ``maximum'' norm on $X^n$ given by
\begin{gather}
\label{norm}
\vertiii{(u_1,\ldots,u_{n})}:=\max_{1\le i\le n} \|u_i\|
\;\;
\text{for all}
\;\;
u_1,\ldots,u_{n}\in X.
\end{gather}	
Theorems~\ref{ZhNg} and \ref{T1.4} use norms on $X^{n-1}$ and $X^n$, which can be different from the conventional maximum norm.

In this paper, we consider the spaces $X$, $X^{n-1}$ and $X^n$ (sometimes also $X^{n+1}$) equipped with their own norms.
We use symbols $\vertiii{\,\cdot\,}$ and $\vertiii{\,\cdot\,}_+$ to denote norms on product spaces.
The first one is normally used in $X^{n-1}$ (or $X^n$) while the latter in $X^{n}$ (or $X^{n+1}$).
Given a $u\in X$, we write $(u,\ldots,u)_n$ to specify that $(u,\ldots,u)\in X^n$.
In some statements, we assume certain compatibility between these norms as well as between the corresponding dual norms.
The following compatibility conditions are in use:
{\renewcommand\theenumi{C\arabic{enumi}}
\begin{enumerate}
\item\label{C1}
${n>1}$ and $\max\{\vertiii{(u_1,\ldots,u_{n-1})}, \vertiii{(u_n,\ldots,u_{n})_{n-1}}\}\le \kappa\vertiii{(u_1,\ldots,u_n)}_+$
for some ${\kappa>0}$ and all $u_1,\ldots,u_n\in X$;
\item\label{C2}
${n>1}$ and $\max\{\vertiii{(u_1,\ldots,u_{n-1})}, \|u_{n}\|\}\le \kappa\vertiii{(u_1,\ldots,u_n)}_+$
for some $\kappa>0$ and all $u_1,\ldots,u_n\in X$;
\item\label{C3}
$\max_{1\le i\le n}\|u_i\|\le\kappa\vertiii{(u_1,\ldots,u_n)}$
for some $\kappa>0$ and all $u_1,\ldots,u_n\in X$;
\item\label{C4}
$\vertiii{(u_1,\ldots,u_n)}\le\kappa\max_{1\le i\le n}\|u_i\|$
for some $\kappa>0$ and all $u_1,\ldots,u_n\in X$;
\item\label{C5}
$\vertiii{(u,\ldots,u)_n}\le\kappa\|u\|$
for some $\kappa>0$ and all $u\in X$;
\item\label{C6}
$\sum_{i=1}^n\|u_i^*\|\le \kappa\vertiii{(u_1^*,\ldots,u_n^*)}$ for some $\kappa>0$ and all $u_1^*,\ldots,u_n^*\in X^*${;}
{\item\label{C7}
$\vertiii{(u^*_1,\ldots,u^*_n)}\le\kappa \sum_{i=1}^{n}\|u^*_i\|$ for some $\kappa>0$ and all $u^*_1,\ldots,u^*_n\in X^*$.
}
\end{enumerate}
}
%\NDC{23/12/24.
%	I have removed the break below.
%}
Note that the last two conditions refer to dual norms.
It is easy to check that all the above conditions are satisfied for the maximum norm \eqref{norm} (and the corresponding dual sum norm), and more generally, for $p$-norms ($p\in[1,+\infty]$):
\begin{gather}
\label{pnorm}
\vertiii{(u_1,\ldots,u_n)}:= \Big(\sum_{i=1}^{n}\|u_i\|^p\Big)^{\frac{1}{p}}
\;\;
\text{for all}
\;\;
u_1,\ldots,u_{n}\in X,
\end{gather}
and their combinations.
Moreover,
{all the conditions are satisfied in finite dimensions since both sides of the inequalities in \eqref{C1}--\eqref{C7} are actually norms in appropriate finite-dimensional spaces\footnote{Observed by a reviewer}.}

In the case of \eqref{pnorm}, if $(X,\|\cdot\|)$ is Banach (Asplund), then $(X^n,\vertiii{\,\cdot\,})$ is also Banach (Asplund).
The $p$-norm \eqref{pnorm} is a composition of the mapping
$(u_1,\ldots,u_n)\mapsto(\|u_1\|,\ldots,\|u_n\|)$
and the $p$-norm on $\R^n$, which explains its good compatibility properties.
Note that this is not the case for an arbitrary norm on $\R^n$: without assuming certain monotonicity of the second mapping, the composition may not be a norm; see, e.g., \cite[p.134]{LooSte90}.
Below we provide a short proof of the fact for completeness.

\begin{proposition}
\label{P2.4}
Let $(X,\|\cdot\|)$ be a normed space, and $\|\cdot\|_n$ be an arbitrary norm on $\R^n$ such that
$\|(\al_1,\ldots,\al_n)\|_n\le \|(\be_1,\ldots,\be_n)\|_n$ if $\abs{\al_i}\le\abs{\be_i}$ for all $i=1,\ldots,n$.
Then the mapping
\begin{gather*}
(u_1,\ldots,u_n)\mapsto
\vertiii{\,(u_1,\ldots,u_n)\,}:= \|(\|u_1\|,\ldots,\|u_n\|)\|_n
\end{gather*}
%where $\varphi$ is given by \eqref{mapping},
is a norm on $X^n$.
\end{proposition}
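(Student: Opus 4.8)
The plan is to verify the three axioms of a norm directly from the definition $\vertiii{\,(u_1,\ldots,u_n)\,} := \|(\|u_1\|,\ldots,\|u_n\|)\|_n$, using the fact that $\|\cdot\|$ is a norm on $X$ and $\|\cdot\|_n$ is a monotone norm on $\R^n$. First I would check positive definiteness: clearly $\vertiii{\,(u_1,\ldots,u_n)\,}\ge 0$, and it equals $0$ iff $\|(\|u_1\|,\ldots,\|u_n\|)\|_n=0$, which, since $\|\cdot\|_n$ is a norm, holds iff $\|u_i\|=0$ for every $i$, i.e. iff $u_i=0$ for every $i$.

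Next I would check positive homogeneity. For a scalar $\lambda$, we have $\vertiii{\,\lambda(u_1,\ldots,u_n)\,}=\|(\|\lambda u_1\|,\ldots,\|\lambda u_n\|)\|_n=\|(\,|\lambda|\,\|u_1\|,\ldots,|\lambda|\,\|u_n\|)\|_n=|\lambda|\,\|(\|u_1\|,\ldots,\|u_n\|)\|_n=|\lambda|\,\vertiii{\,(u_1,\ldots,u_n)\,}$, using homogeneity of $\|\cdot\|$ in $X$ in the second step and of $\|\cdot\|_n$ in $\R^n$ in the third.

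The only step that requires the monotonicity hypothesis is the triangle inequality, and this is where I expect the one genuine (if still routine) argument. Given $(u_1,\ldots,u_n)$ and $(v_1,\ldots,v_n)$ in $X^n$, the triangle inequality in $X$ gives $\|u_i+v_i\|\le\|u_i\|+\|v_i\|$ for each $i$. Since all quantities are nonnegative, monotonicity of $\|\cdot\|_n$ applies with $\al_i=\|u_i+v_i\|$ and $\be_i=\|u_i\|+\|v_i\|$, yielding
\begin{gather*}
\vertiii{\,(u_1,\ldots,u_n)+(v_1,\ldots,v_n)\,}
=\|(\|u_1+v_1\|,\ldots,\|u_n+v_n\|)\|_n
\le\|(\|u_1\|+\|v_1\|,\ldots,\|u_n\|+\|v_n\|)\|_n.
\end{gather*}
Then the triangle inequality for $\|\cdot\|_n$ on $\R^n$, applied to the vectors $(\|u_1\|,\ldots,\|u_n\|)$ and $(\|v_1\|,\ldots,\|v_n\|)$, bounds the right-hand side by $\|(\|u_1\|,\ldots,\|u_n\|)\|_n+\|(\|v_1\|,\ldots,\|v_n\|)\|_n=\vertiii{\,(u_1,\ldots,u_n)\,}+\vertiii{\,(v_1,\ldots,v_n)\,}$. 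This completes the verification. The main (and essentially only) obstacle is remembering that monotonicity is precisely what lets one pass from the coordinatewise bound $\|u_i+v_i\|\le\|u_i\|+\|v_i\|$ to an inequality between the $\|\cdot\|_n$-values; without it the map need not be subadditive, as the cited example in \cite{LooSte90} shows.
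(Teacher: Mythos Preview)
Your proof is correct and follows essentially the same route as the paper's: the paper declares positive definiteness and homogeneity trivial and proves only the triangle inequality, via exactly your two steps (componentwise triangle inequality in $X$ plus monotonicity of $\|\cdot\|_n$, then the triangle inequality for $\|\cdot\|_n$). Your write-up is simply more explicit about the first two axioms.
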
	
\if{
\NDC{23/12/24.
The proposition is not new as observed (without proof) in the calculus book I sent you.
However, I think it is not bad to have the proof explicitly.
}
}\fi
\begin{proof}
We only need to show the triangle inequality since the other conditions are trivially satisfied.
For any $(x_1,\ldots,x_n),(u_1,\ldots,u_n)\in X^n$, we have
\begin{align*}
\vertiii{\,(x_1,\ldots,x_n)+(u_1,\ldots,u_n)\,}
&=\|(\|x_1+u_1\|,\ldots,\|x_n+u_n\|)\|_n\\
&\le
\|(\|x_1\|+\|u_1\|,\ldots,\|x_n\|+\|u_n\|)\|_n\\
&=\|((\|x_1\|,\ldots,\|x_n\|)+(\|u_1\|,\ldots,\|u_n\|))\|_n\\
&\le \|(\|x_1\|,\ldots,\|x_n\|)\|_n+ \|(\|u_1\|,\ldots,\|u_n\|)\|_n\\
&=\vertiii{\,(x_1,\ldots,x_n)\,}+ \vertiii{\,(u_1,\ldots,u_n)\,},
\end{align*}
which completes the proof.
\end{proof}	

The monotonicity of the norm $\|\cdot\|_n$ in Proposition~\ref{P2.4} is essential to ensure that the composition mapping is a norm.

\begin{example}
Consider $(\R,|\cdot|)$ and $(\R^2,\|\cdot\|_2)$, where $\|(\al,\be)\|_2:=|\al-\be|+|\beta|$ for all $(\al,\be)\in\R^2$.
First, observe that $\|\cdot\|_2$ is a norm.
It does not satisfy the monotonicity property in Proposition~\ref{P2.4}.
Indeed, let $(\al_1,\be_1):=(1,-1)$ and $(\al_2,\be_2):=(2,1)$.
Then $|\al_1|<|\al_2|$, $|\be_1|=|\be_2|$, while
%At the same time,
$\|(\al_1,\be_1)\|_2=3>2= \|(\al_2,\be_2)\|_2$.
For all $(\al,\be)\in\R^2$, set $\vertiii{\,(\al,\be)\,}:=\|(|\al|,|\be|)\|_2$, and observe that $\vertiii{\,\cdot\,}$ is not a norm on $\R^2$ as it violates the triangle inequality.
Indeed, let $(\al_1,\be_1):=(1,1)$ and $(\al_2,\be_2):=(-1,1)$.
Then $\vertiii{\,(\al_1,\be_1)\,}=\vertiii{\,(\al_2,\be_2)\,}=1$, while $\vertiii{\,(\al_1,\be_1)+(\al_2,\be_2)\,}=\|(0,2)\|_2=4$.
\end{example}
\if{
\AK{16/11/24.
Are the above proposition and example new?}
\NDC{23/23/24.
The previous example was not new.
I took it from the website (that I sent you some time ago) and planned to change it later.
I have made some modifications to make it look new.
}
\AK{26/11/24.
Proper references must be provided.}
}\fi
The next proposition summarizes relations between the compatibility properties.

\begin{proposition}
\label{P02.3}
%Let $\|\cdot\|$, $\vertiii{\,\cdot\,}$ and $\vertiii{\,\cdot\,}_+$ be as above.
The following relations hold true:
\begin{enumerate}
\item\label{P2.5-1}
\eqref{C2} $\&$ \eqref{C5} $\Rightarrow$ \eqref{C1};
\if{
If norms $\vertiii{\,\cdot\,}$ and $\vertiii{\,\cdot\,}'$ satisfy \eqref{C2}, and norms $\vertiii{\,\cdot\,}$ and $\|\cdot\|$ satisfy \eqref{C5}, then norms $\vertiii{\,\cdot\,}$ and $\vertiii{\,\cdot\,}'$ satisfy \eqref{C1}.
}\fi
\item\label{P2.5-2}
\eqref{C1} $\&$ \eqref{C3} $\Rightarrow$ \eqref{C2};
\if{
If norms $\vertiii{\,\cdot\,}$ and $\vertiii{\,\cdot\,}'$ satisfy \eqref{C1}, and norms $\vertiii{\,\cdot\,}$ and $\|\cdot\|$ satisfy \eqref{C3}, then norms $\vertiii{\,\cdot\,}$ and $\vertiii{\,\cdot\,}'$ satisfy \eqref{C2}.
}\fi
\item
\label{P02.3.3}
\eqref{C4} $\Rightarrow$ \eqref{C5} $\&$ \eqref{C6}{;}
{\item\label{P2.5-4}
\eqref{C3} $\Rightarrow$ \eqref{C7}.
}
\if{
If norms $\vertiii{\,\cdot\,}$ and $\|\cdot\|$ satisfy \eqref{C4}, then they satisfy \eqref{C5}, and the corresponding dual norms satisfy \eqref{C6}.
}\fi
\end{enumerate}
\end{proposition}

\begin{proof}
Assertions \eqref{P2.5-1} and \eqref{P2.5-2}, and the implication \eqref{C4} $\Rightarrow$ \eqref{C5} in \eqref{P02.3.3} are straightforward.
We show that  \eqref{C4} $\Rightarrow$ \eqref{C6}.
Let $\|\cdot\|$ and $\vertiii{\,\cdot\,}$  satisfy \eqref{C4} with some $\kappa>0$, and $u_1^*,\ldots,u_n^*\in X^*$.
Then
\begin{align*}
\vertiii{(u_1^*,\ldots,u_n^*)}= \sup_{\vertiii{(u_1,\ldots,u_n)}=1} \sum_{i=1}^n\ang{u_i^*,u_i}\ge&\kappa\iv \sup_{\max\limits_{1\le i\le n}\|u_i\|=1} \sum_{i=1}^n\ang{u_i^*,u_i}
\\=&
\kappa\iv \sum_{i=1}^n\sup_{\|u_i\|=1} \ang{u_i^*,u_i}=
\kappa\iv \sum_{i=1}^n\|u_i^*\|.
\end{align*}
{The proof of assertion \eqref{P2.5-4} is similar.}
%The proof is complete.
\end{proof}

\begin{remark}
%\begin{enumerate}
\if{
\item
Similar to the above proof,
it is not difficult to show that condition \eqref{C3} implies the following counterpart of condition \eqref{C6}: $\vertiii{(u^*_1,\ldots,u^*_n)}\le\kappa \sum_{i=1}^{n}\|u^*_i\|$ for some $\kappa>0$ and all $u^*_1,\ldots,u^*_n\in X^*$.
}\fi
%\item
As one can see from the results in Section~\ref{S3}, the norm $\vertiii{\,\cdot\,}$ on $X^{n-1}$ plays a key role in generalized separation statements; the role of the norm $\vertiii{\,\cdot\,}_+$ on $X^{n}$ is more modest:
in many cases, it would be sufficient to consider $\vertiii{\,\cdot\,}_+$ defined for some $\ga>0$ as follows:
\begin{gather}
\label{ganorm}
\vertiii{(u_1,\ldots,u_n)}_+:= \max\{\vertiii{(u_1,\ldots,u_{n-1})},\ga\|u_n\|\}
\;\;
\text{for all}
\;\;
u_1,\ldots,u_{n}\in X.
\end{gather}
Then condition \eqref{C2} is automatically satisfied, and the properties of the norm $\vertiii{\,\cdot\,}_+$ are determined by those of $\|\cdot\|$ and $\vertiii{\,\cdot\,}$.
In particular, $(X^n,\vertiii{\,\cdot\,}_+)$ is Banach (Asplund) whenever $(X,\|\cdot\|)$ and $(X^{n-1},\vertiii{\,\cdot\,})$ are.
%\end{enumerate}
\end{remark}

Compatibility conditions \eqref{C3} and \eqref{C4} ensure, in particular, that conditions formulated in terms of tangent or normal cones to the aggregate set $\widehat\Omega:=\Omega_1\times\ldots\times\Omega_n$ can be reformulated in terms of the corresponding cones to the individual sets.

\begin{proposition}
\label{P2.3}
Let $(X,\|\cdot\|)$ and $(X^n,\vertiii{\,\cdot\,})$ be normed spaces, $\|\cdot\|$ and $\vertiii{\,\cdot\,}$ be compatible in the sense of \eqref{C3} and \eqref{C4}, and $\hat x:=(x_1,\ldots,x_n)\in\widehat\Omega$.
The following assertions hold true:
\begin{enumerate}
\item\label{P2.6-1}
$T^C_{\widehat\Omega}(\hat x)= T^C_{\Omega_1}(x_1)\times\ldots\times T^C_{\Omega_n}(x_n)$;
\item
$N^C_{\widehat\Omega}(\hat x)= N^C_{\Omega_1}(x_1)\times\ldots\times N^C_{\Omega_n}(x_n)$;
\item
$N^F_{\widehat\Omega}(\hat x)= N^F_{\Omega_1}(x_1)\times\ldots\times N^F_{\Omega_n}(x_n)$.
\end{enumerate}
\end{proposition}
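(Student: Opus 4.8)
The plan is to reduce each of the three identities to the corresponding one-dimensional computation by exploiting the product structure of $\widehat\Omega$ together with the two-sided norm equivalence provided by \eqref{C3} and \eqref{C4}. The key observation is that, since $\widehat\Omega=\Omega_1\times\ldots\times\Omega_n$, a sequence $\hat x_k=(x_{1,k},\ldots,x_{n,k})$ converges to $\hat x$ in $(X^n,\vertiii{\,\cdot\,})$ if and only if each $x_{i,k}\to x_i$ in $(X,\|\cdot\|)$; this follows at once from \eqref{C3} and \eqref{C4}, which make $\vertiii{\,\cdot\,}$ equivalent to the maximum norm. Likewise, a sequence $\hat z_k=(z_{1,k},\ldots,z_{n,k})$ converges to $\hat z$ in $X^n$ iff each component converges in $X$, and a scaling $\hat x_k+t_k\hat z_k$ lies in $\widehat\Omega$ iff $x_{i,k}+t_kz_{i,k}\in\Omega_i$ for every $i$.

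First I would prove \eqref{P2.6-1}. For the inclusion ``$\supseteq$'', take $z_i\in T^C_{\Omega_i}(x_i)$ for each $i$ and let $\hat x_k\to\hat x$ with $\hat x_k\in\widehat\Omega$ and $t_k\downarrow 0$; by the component-wise characterization each $x_{i,k}\to x_i$, so the definition of the Clarke tangent cone yields $z_{i,k}\to z_i$ with $x_{i,k}+t_kz_{i,k}\in\Omega_i$, and then $\hat z_k:=(z_{1,k},\ldots,z_{n,k})\to(z_1,\ldots,z_n)$ with $\hat x_k+t_k\hat z_k\in\widehat\Omega$, giving $(z_1,\ldots,z_n)\in T^C_{\widehat\Omega}(\hat x)$. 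For ``$\subseteq$'', given $\hat z=(z_1,\ldots,z_n)\in T^C_{\widehat\Omega}(\hat x)$, fix an index $i$ and test the Clarke tangent cone definition against sequences of the form $\hat x_k=(x_1,\ldots,x_{i-1},x_{i,k},x_{i+1},\ldots,x_n)$ where only the $i$th component moves (these lie in $\widehat\Omega$ since $x_j\in\Omega_j$); the resulting $\hat z_k\to\hat z$ with $\hat x_k+t_k\hat z_k\in\widehat\Omega$ forces $x_{i,k}+t_kz_{i,k}\in\Omega_i$, so $z_i\in T^C_{\Omega_i}(x_i)$. This argument works for each $i$, establishing the reverse inclusion.

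Next, the second identity follows from the first by polarity: $N^C_{\widehat\Omega}(\hat x)$ is by definition the polar cone of $T^C_{\widehat\Omega}(\hat x)=T^C_{\Omega_1}(x_1)\times\ldots\times T^C_{\Omega_n}(x_n)$, and the polar of a product of cones is the product of the polars — here one uses that $\ang{(x_1^*,\ldots,x_n^*),(z_1,\ldots,z_n)}=\sum_{i=1}^n\ang{x_i^*,z_i}$ and that one may feed in test vectors that are zero in all but one component. For the third identity I would either give the analogous direct argument from the definition \eqref{NC} of the \Fr\ normal cone — again the norm equivalence from \eqref{C3}/\eqref{C4} lets one pass the $\limsup$ back and forth between $X^n$ and the factors, and one separates variables by choosing $x\to\bx$ along one component at a time — or, more slickly, invoke $N^F_{\widehat\Omega}(\hat x)=\partial^F i_{\widehat\Omega}(\hat x)$ together with $i_{\widehat\Omega}(u_1,\ldots,u_n)=\sum_{i=1}^n i_{\Omega_i}(u_i)$ and the exact \Fr\ subdifferential formula for separable sums.

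The main obstacle, and the only place the compatibility hypotheses genuinely enter, is verifying that the various limiting and $\limsup$ operations are insensitive to whether norms on $X^n$ or on the individual factors are used; once the equivalence $\kappa^{-1}\max_i\|u_i\|\le\vertiii{(u_1,\ldots,u_n)}\le\kappa\max_i\|u_i\|$ is in hand (which is precisely \eqref{C3} together with \eqref{C4}), all three identities are routine. A minor point to handle carefully in the ``$\subseteq$'' directions is that restricting to one-component-moving test sequences is legitimate because $\widehat\Omega$ is a full Cartesian product, so freezing the other components keeps the point in $\widehat\Omega$; this is what lets the product structure decouple.
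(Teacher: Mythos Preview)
Your proposal is correct and follows essentially the same route as the paper: both prove \eqref{P2.6-1} by testing with one-component-moving sequences (using \eqref{C3} and \eqref{C4} to pass between convergence in $X^n$ and componentwise convergence), deduce the Clarke normal cone identity by polarity with single-component test vectors, and handle the \Fr\ normal cone by a direct $\limsup$ argument using the norm equivalence. Your alternative for part (iii) via $\sd^F i_{\widehat\Omega}$ and separable sums is a legitimate shortcut, though it ultimately rests on the same computation.
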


\begin{proof}
\begin{enumerate}
\item
Let $\hat z:=(z_1,\ldots,z_n)\in T^C_{\widehat\Omega}(\hat x)$ and $i\in\{1,\ldots,n\}$.
Take arbitrary sequences $t_k\downarrow 0$ and $x^k\to x_i$ with $x^k\in\Omega_i$ for all $k\in\N$.
Denote $\hat x^k:=(x_1,\ldots,x^k,\ldots,x_n)$ ($x^k$ is in the $i$th position) for all $k\in\N$.
By \eqref{C4}, $\hat x^k \to\hat x$.
Thus, there is a sequence $\hat z^k:=(z_1^{k},\ldots,z_n^{k})\to\hat z$ such that, for all $k\in\N$, we have $\hat x^k+t_k\hat z^k\in\widehat\Omega$, and consequently, $x^k+t_kz_i^k\in\Omega_i$.
By \eqref{C3}, we have $z_i^k\to z_i$.
Hence, $z_i\in T^C_{\Omega_i}(x_i)$, and consequently, $T^C_{\widehat\Omega}(\hat x)\subset T^C_{\Omega_1}(x_1)\times\ldots\times T^C_{\Omega_n}(x_n)$.
		
Conversely, let $z_i\in T^C_{\Omega_i}(x_i)$ $(i=1,\ldots,n)$ and $\hat z:=(z_1,\ldots,z_n)$.
Take arbitrary sequences $t_k\downarrow 0$ and  $\hat x^k:=(x_1^{k},\ldots,x_n^{k})	\to \hat x$ with $\hat x^k\in\widehat\Omega$ for all $k\in\N$.
By \eqref{C3}, $x_i^k\to x_i$ for all $i=1,\ldots,n$.
Thus, there exist  $z_i^k\to z_i$ such that $x^k_{i}+t_kz^k_i\in\Omega_i$ for all $k\in\N$ and $i=1,\ldots,n$.
By \eqref{C4}, $(z_1^{k},\ldots,z_n^{k})\to\hat z$.
Hence, $\hat z\in T^C_{\widehat\Omega}(\hat x)$, and consequently, $T^C_{\Omega_1}(x_1)\times\ldots\times T^C_{\Omega_n}(x_n)\subset T^C_{\widehat\Omega}(\hat x)$.
\item
Let $\hat x^*:=(x^*_1,\ldots,x^*_n)\in N^C_{\widehat\Omega}(\hat x)$.
For each $i\in\{1,\ldots,n\}$ and $z\in T^C_{\Omega_i}(x_i)$, one has
$\hat z:=(0,\ldots,z,\ldots,0)\in T^C_{\widehat\Omega}(x)$
($z$ is in the $i$th position),
and consequently,
$\langle x^*_i,z\rangle=\langle\hat x^*,\hat z\rangle\le 0$.
Hence, $x^*_i\in N^C_{\Omega_i}(x_i)$, and consequently, $N^C_{\widehat\Omega}(\hat x)\subset N^C_{\Omega_1}(x_1)\times\ldots\times N^C_{\Omega_n}(x_n)$.

Conversely, let $x^*_i\in N^C_{\Omega_i}(x_i)$ $(i=1,\ldots,n)$, $\hat x^*:=(x^*_1,\ldots,x^*_n)$ and
$\hat z:=(z_1,\ldots,z_n)\in T^C_{\widehat\Omega}(\hat x)$.
By \eqref{P2.6-1}, $z_i\in T^C_{\Omega_i}(x_i)$ $(i=1,\ldots,n)$.
Hence, $\langle\hat x^*,\hat z\rangle= \sum_{i=1}^{n}\langle x^*_i,z_i\rangle\le 0$,
i.e., $\hat x^*\in N^C_{\widehat\Omega}(\hat x)$, and consequently,
$N^C_{\Omega_1}(x_1)\times\ldots\times N^C_{\Omega_n}(x_n)\subset N^C_{\widehat\Omega}(\hat x)$.

\item
Suppose that compatibility conditions \eqref{C3} and \eqref{C4} are satisfied with some $\kappa_1>0$ and $\kappa_2>0$, respectively.
Let $x^*_i\in N^F_{\Omega_i}(x_i)$ $(i=1,\ldots,n)$, and
 $\hat x^*:=(x^*_1,\ldots,x^*_n)$.
By \eqref{C3},
\begin{gather*}
\limsup_{\widehat\Omega\ni (x'_1,\ldots,x'_n)\to \hat x\atop (x'_1,\ldots,x'_n)\ne\hat x} \frac {\langle \hat x^*,(x'_1,\ldots,x'_n)-\hat x \rangle}{\vertiii{(x'_1,\ldots,x'_n)-\hat x}}
\le
%\frac1{\kappa_1}
{\kappa_1}
\sum_{i=1}^n\limsup_{\Omega_i\ni x'_i\to x_i, \; x'_i\ne x_i}
\frac{[\langle x^*_i,x'_i-x_i \rangle]_+}{\|x'_i-x_i\|}\le 0,
\end{gather*}
where we use the notation $\al_+:=\max\{\al,0\}$.
Thus, $\hat x^*\in N^F_{\widehat\Omega}(\hat x)$, and consequently,
$N^F_{\Omega_1}(x_1)\times\ldots\times N^F_{\Omega_n}(x_n)\subset N^F_{\widehat\Omega}(\hat x)$.

Conversely, let $\hat x^*:=(x^*_1,\ldots,x^*_n)\in N^F_{\widehat\Omega}(\hat x)$.
By \eqref{C4}, for each $i\in\{1,\ldots,n\}$,
\begin{gather*}
\limsup_{\Omega_i\ni x'_i\to x_i,\; x'_i\ne x_i}
\frac{\langle x^*_i,x'_i-x_i \rangle}{\|x'_i-x_i\|}
\le
{\kappa_2}\limsup_{\Omega_i\ni x'_i	\to x_i,\; x'_i\ne x_i}
\frac {[\langle\hat x^*,(0,\ldots,x'_i-x_i,\ldots,0) \rangle]_+}{\vertiii{(0,\ldots,x'_i-x_i,\ldots,0)}}\le 0.
\end{gather*}	
Thus, $x^*_i\in N^F_{\Omega_i}(x_i)$ $(i=1,\ldots,n)$, and consequently, $N^F_{\widehat\Omega}(\hat x)\subset N^F_{\Omega_1}(x_1)\times\ldots\times N^F_{\Omega_n}(x_n)$.
\end{enumerate}
The proof is complete.
\end{proof}

{
Conditions \eqref{C3} and \eqref{C4} are satisfied, e.g., for \emph{sign-symmetric} \cite{Cuong2} norms.
\begin{proposition}
Let $\|\cdot\|$ and $\vertiii{\;\cdot\;}$ be norms on $X$ and $X^n$, respectively.	
Suppose that they satisfy the following conditions:
\begin{enumerate}
\item
$\vertiii{(x_1,\ldots,x_n)}
=\vertiii{(\pm x_1,\ldots,\pm x_n)}$ for all $x_1,\ldots,x_n\in X$;
\item
$\vertiii{(x_1,\ldots,x_n)}
=\|x_i\|$ for each $i=1,\ldots,n$ and all $x_i\in X$ as long as $x_j=0$ if $j\ne i$.
\end{enumerate}
Then $\max\limits_{1\le i\le n}\|x_i\|\le \vertiii{(x_1,\ldots,x_n)}\le n\cdot \max\limits_{1\le i\le n}\|x_i\|$.
\end{proposition}
\begin{proof}
Let $x_1,\ldots,x_n\in X$.
For each $i=1,\ldots,n$, set $u_i=v_i=x_i$, $u_j=-x_j$ and $v_j=0$ if $j\ne i$.
Denote $\hat x:=(x_1,\ldots,x_n)$,
$\hat x_i^-:=(u_1,\ldots,u_n)$ and
$\hat x_i^0:=(v_1,\ldots,v_n)$.
Then $\hat x+\hat x_i^-=\hat x_i^0$ and
$\vertiii{\hat x_i^-}=\vertiii{\hat x}$.
Hence,
\begin{align*}
\|x_i\|
=\vertiii{\hat x_i^0} \le(\vertiii{\hat x} +\vertiii{\hat x_i^-})/2
=\vertiii{\hat x}.
\end{align*}	
Thus, $\max\limits_{1\le i\le n}\|x_i\|\le \vertiii{(x_1,\ldots,x_n)}$.
On the other hand,
\begin{align*}
\vertiii{\hat x}
\le\sum_{i=1}^{n}\vertiii{\hat x_i^0}
=\sum_{i=1}^{n}\|x_i\| \le n\cdot\max\limits_{1\le i\le n}\|x_i\|.
\end{align*}	
The proof is complete.
\end{proof}
}
\section{Generalized separation}
\label{S3}
We prove a generalized separation statement with arbitrary product norms covering all the results discussed in the Introduction.

\begin{theorem}
%[Generalized separation]
\label{T3.3}
Let $X$ be a vector space, $(X^{n-1},\vertiii{\,\cdot\,})$ and $(X^n,\vertiii{\,\cdot\,}_+)$ be a normed and a Banach spaces, respectively.
Assume that $\widehat\Omega$ is closed, $\widehat\omega:= (\omega_1,\ldots,\omega_n)\in\widehat\Omega$,
$\eps>0$ and $\de>0$.
Suppose that $\bigcap_{i=1}^n\Omega_i=\emptyset$ and
\begin{gather}
\label{T3.3-1}
\vertiii{(\omega_1-\omega_n,\ldots,\omega_{n-1}-\omega_{n})} <\inf_{u_i\in\Omega_i\;(i=1,\ldots,n)} \vertiii{(u_1-u_n,\ldots,u_{n-1}-u_{n})}+\varepsilon.
%\\
%\notag
%\vertiii{\hat\omega_{-1}-(\omega_{n},\ldots,\omega_{n})_{n-1}} <\inf_{\hat u:=(u_1,\ldots,u_{n})\in\Omega} \vertiii{\hat u_{-1}-(u_{n},\ldots,u_{n})_{n-1}}+\varepsilon.
\end{gather}	
The following assertions hold true:
\begin{enumerate}
\item\label{T3.1-2}
there exist $\hat x:=(x_1,\ldots,x_n)\in\widehat\Omega\cap B_\de(\widehat\omega)$  and $\hat x^*:=(x_1^*,\ldots,x_n^*)\in (X^*)^n$ such that
\begin{gather}
\label{L1.8-2}
\sum_{i=1}^n x_i^*=0,\quad
\vertiii{(x_1^*,\ldots,x^*_{n-1})}=1,
\\
\label{T5.3-5}
d\left(\hat x^*,N^C_{\widehat\Omega}(\hat x)\right) <\dfrac{\eps}{\de},\\
\notag
\sum_{i=1}^{n-1} \langle x_i^*, x_n-x_i\rangle=\vertiii{(x_n-x_1,\ldots,x_n-x_{n-1})};
\end{gather}
\item\label{T3.1-3}
if $(X^n,\vertiii{\,\cdot\,}_+)$ is Asplund, and
the norms
$\vertiii{\,\cdot\,}$ and $\vertiii{\,\cdot\,}_+$ are compatible in the sense of \eqref{C1},
then, for any $\tau\in(0,1)$, there exist $\hat x:=(x_1,\ldots,x_n)\in\widehat\Omega\cap B_\de(\widehat\omega)$ and $\hat x^*:=(x_1^*,\ldots,x_n^*)\in (X^*)^n$ such that conditions \eqref{L1.8-2} are satisfied, and
\begin{gather}
\label{T5.3-3}
d\left(\hat x^*,N^F_{\widehat\Omega}(\hat x)\right)<\dfrac{\eps}{\de},\\
\label{T5.3-4}
\sum_{i=1}^{n-1} \langle x_i^*, x_n-x_i\rangle>\tau\vertiii{(x_n-x_1,\ldots,x_n-x_{n-1})}.
\end{gather}
\end{enumerate}
\end{theorem}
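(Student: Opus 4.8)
The plan is to mimic the classical proof of the extremal principle, working directly in the product space $X^n$ with the aggregate set $\widehat\Omega$, and to split the argument into a chain of elementary claims as announced in the abstract. Set
\[
f(u_1,\ldots,u_n):=\vertiii{(u_1-u_n,\ldots,u_{n-1}-u_n)}.
\]
Since $\widehat\Omega$ is closed and $\bigcap_{i=1}^n\Omega_i=\es$, the function $f+i_{\widehat\Omega}$ is \lsc\ on the Banach space $(X^n,\vertiii{\,\cdot\,}_+)$, bounded below by $0$, and does not attain the value $0$; assumption \eqref{T3.3-1} says precisely that $\widehat\omega$ is an $\eps$-almost minimizer. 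First I would apply the Ekeland variational principle (Lemma~\ref{EVP}) on $(\widehat\Omega,\vertiii{\,\cdot\,}_+)$ with the given $\eps$ and $\lambda=\de$: this yields $\hat x=(x_1,\ldots,x_n)\in\widehat\Omega$ with $\vertiii{\hat x-\widehat\omega}_+<\de$ (whence $\hat x\in B_\de(\widehat\omega)$) such that $\hat x$ minimizes over $\widehat\Omega$ the perturbed function
\[
\hat u\mapsto f(\hat u)+\frac{\eps}{\de}\vertiii{\hat u-\hat x}_+ .
\]
Note $f(\hat x)>0$ since $\bigcap_i\Omega_i=\es$, so $f$ is a norm-composition that is ``nonzero'' at $\hat x$ — this is what will let us normalize the dual element.

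\textbf{Deriving the dual conditions (assertion (i)).}
Next I would pass to subdifferentials. For the Clarke case, apply the Clarke sum rule (Lemma~\ref{SR}\,\eqref{SR.Cl}) to the sum of $f$, the Lipschitz function $\frac{\eps}{\de}\vertiii{\,\cdot\,-\hat x}_+$, and $i_{\widehat\Omega}$: from $0\in\sd^C\big(f+\tfrac{\eps}{\de}\vertiii{\,\cdot-\hat x}_++i_{\widehat\Omega}\big)(\hat x)$ we obtain $\hat v^*\in\sd^C f(\hat x)$, $\hat w^*$ with $\vertiii{\hat w^*}_+\le\eps/\de$, and $\hat y^*\in N^C_{\widehat\Omega}(\hat x)$ with $\hat v^*+\hat w^*+\hat y^*=0$, so $d(-\hat v^*,N^C_{\widehat\Omega}(\hat x))\le\vertiii{\hat w^*}_+<\eps/\de$. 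The crux is then a \emph{claim} computing $\sd^C f(\hat x)$ (equivalently, the subdifferential of the composition of the linear map $L(u_1,\ldots,u_n)=(u_1-u_n,\ldots,u_{n-1}-u_n)$ with the norm $\vertiii{\,\cdot\,}$ on $X^{n-1}$): any $\hat v^*\in\sd f(\hat x)$ has the form $\hat v^*=L^*\hat z^*=(z_1^*,\ldots,z_{n-1}^*,-\sum_{i=1}^{n-1}z_i^*)$ where $\hat z^*=(z_1^*,\ldots,z_{n-1}^*)$ lies in the dual unit ball of $\vertiii{\,\cdot\,}$ and attains the norm of $L\hat x$, i.e. $\vertiii{\hat z^*}\le 1$ and $\langle\hat z^*,L\hat x\rangle=\vertiii{L\hat x}$. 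Taking $x_i^*:=-z_i^*$ $(i<n)$ and $x_n^*:=\sum_{i<n}z_i^*$ gives $\sum_i x_i^*=0$, the pairing identity $\sum_{i<n}\langle x_i^*,x_n-x_i\rangle=\vertiii{(x_n-x_1,\ldots,x_n-x_{n-1})}$, and $\vertiii{(x_1^*,\ldots,x_{n-1}^*)}\le 1$. A short separate \emph{claim} rules out $\vertiii{(x_1^*,\ldots,x_{n-1}^*)}<1$: because $f(\hat x)>0$, subgradients of a norm at a nonzero point have dual norm exactly $1$, which forces equality; this yields \eqref{L1.8-2} and \eqref{T5.3-5}.

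\textbf{The \Fr\ case (assertion (ii)).}
Here I would replace the Clarke sum rule by the fuzzy \Fr\ sum rule (Lemma~\ref{SR}\,\eqref{SR.Fr}), which requires $(X^n,\vertiii{\,\cdot\,}_+)$ Asplund. Applied to $f+\tfrac{\eps}{\de}\vertiii{\,\cdot-\hat x}_++i_{\widehat\Omega}$ at $\hat x$ with an auxiliary tolerance $\eps'$ (to be chosen small, together with the slack in the Ekeland inequality, so that all the $<\eps/\de$ and $>\tau\,\vertiii{\cdot}$ estimates survive), it produces nearby points and \Fr\ subgradients $\hat v^*\in\sd^F f(\hat x^1)$, a Lipschitz-part subgradient of norm $\le\eps/\de$, and $\hat y^*\in N^F_{\widehat\Omega}(\hat x^2)$; compatibility \eqref{C1} is what lets us transfer the perturbation radii measured in $\vertiii{\,\cdot\,}_+$ back to the individual coordinates and keep $\hat x\in B_\de(\widehat\omega)$ after relabelling the base point $\hat x$. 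Since $f$ is convex, $\sd^F f=\sd f$, so the structural \emph{claim} above still identifies $\hat v^*=L^*\hat z^*$; but now, to get a genuine \emph{subgradient} at the \emph{point} $\hat x$ (not $\hat x^1$) we only get the approximate pairing $\langle\hat z^*,L\hat x\rangle>\tau\vertiii{L\hat x}$ and $\vertiii{\hat z^*}\le 1$, after absorbing the error — which is why assertion (ii) has the inequality \eqref{T5.3-4} with $\tau$ rather than an equality. Setting $x_i^*:=-z_i^*$ and renormalizing (dividing by $\vertiii{(z_1^*,\ldots,z_{n-1}^*)}\in(\tau,1]$, which changes the $\eps/\de$ bound only by a factor $<1/\tau$ that can be pre-absorbed into the choice of $\eps'$) yields \eqref{L1.8-2}, \eqref{T5.3-3} and \eqref{T5.3-4}.

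\textbf{Main obstacle.}
The delicate point is bookkeeping of the small parameters: the Ekeland step gives a strict inequality with some genuine slack, and the fuzzy sum rule introduces another tolerance $\eps'$ plus displacement of the base point; one must choose these (and the order of quantifiers) so that, after the renormalization by a factor in $(1/\tau,\,\infty)$ and the coordinatewise transfer of norms via \eqref{C1}, every strict estimate in \eqref{T5.3-5}, \eqref{T5.3-3}, \eqref{T5.3-4} and $\hat x\in B_\de(\widehat\omega)$ still holds. The structural claim $\sd f(\hat x)=L^*(\text{norming functionals of }L\hat x)$ is the other technical heart, but it is a routine computation with the chain rule for a norm composed with a surjective linear map together with the fact that $L\hat x\neq 0$.
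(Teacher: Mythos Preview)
Your approach is essentially the one in the paper: Ekeland on $(\widehat\Omega,\vertiii{\,\cdot\,}_+)$ applied to $f=\vertiii{\,\cdot\,}\circ L$, then the Clarke sum rule for (i) and the fuzzy \Fr\ sum rule for (ii), together with the convex-analysis computation $\sd f(\hat u)=L^*\{\hat z^*:\vertiii{\hat z^*}\le1,\ \langle\hat z^*,L\hat u\rangle=\vertiii{L\hat u}\}$.

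Two small corrections will save you work. First, apply Ekeland with some $\eps'<\eps$ (the strict inequality in \eqref{T3.3-1} gives such slack); otherwise the Lipschitz term only gives $\vertiii{\hat w^*}_+\le\eps/\de$, not $<\eps/\de$, and \eqref{T5.3-5} fails to be strict. Second, your renormalization step in (ii) is unnecessary and the role you assign to \eqref{C1} is misplaced: the ball $B_\de(\widehat\omega)$ is already in the $\vertiii{\,\cdot\,}_+$ norm, so $\hat x\in B_\de(\widehat\omega)$ follows from the fuzzy-rule perturbation and the triangle inequality alone. What \eqref{C1} is actually needed for is to control $\vertiii{L\hat x'-L\hat x^\circ}$ and $\vertiii{L\hat x-L\hat x^\circ}$ in terms of $\vertiii{\hat x'-\hat x^\circ}_+$ and $\vertiii{\hat x-\hat x^\circ}_+$ (via the two terms $\vertiii{(u_1,\ldots,u_{n-1})}$ and $\vertiii{(u_n,\ldots,u_n)_{n-1}}$). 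This gives $\vertiii{L\hat x^1}>0$, hence $\vertiii{\hat z^*}=1$ exactly (no renormalization), and it is also what lets you pass from the exact pairing at $\hat x^1$ to the approximate one \eqref{T5.3-4} at $\hat x$. With these fixes your outline matches the paper's proof.
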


\begin{proof}
Define $f_1: X^{n}\rightarrow [0,+\infty)$ by
\begin{gather}
\label{f1}
f_1(\hat u):=\vertiii{(u_1-u_n,\ldots,u_{n-1}-u_n)}
\;\;
\text{for all}
\;\;
\hat u:=(u_1,\ldots,u_n)\in X^n.
\end{gather}
Let $\de>0$.
By \eqref{T3.3-1}, there exists an $\eps'\in(0,\eps)$
such that
\begin{gather}
\label{T4.1P2}
f_1(\widehat\omega) <\inf_{u\in\widehat\Omega} f_1(\hat u)+\eps'.
\end{gather}
The proof is divided into a series of claims.
\smallskip

\underline{Claim 1}.
There exists a point
$\hat x^\circ:=(x^\circ_1,\ldots,x^\circ_n)\in\widehat\Omega$ such that
\begin{gather}
\label{T5.2-2}
\vertiii{\hat x^\circ-\widehat\omega}_+<\de,
\\
\label{T5.2-3}
f_1(\hat u)-f_1(\hat x^\circ)+
\frac{\eps'}{\de}
\vertiii{\hat u-\hat x^\circ}_+\geq 0
\;\;
\text{for all}
\;\;
\hat u\in\widehat\Omega,
\\
\label{T5.2-3b}
\vertiii{(x^\circ_1-x^\circ_n,\ldots,x^\circ_{n-1}-x^\circ_n)}>0.
\end{gather}

The closed set $\widehat\Omega$ equipped with the distance induced by the norm $\vertiii{\,\cdot\,}_+$ is a complete metric space.
The function $f_1$ is continuous.
Conditions \eqref{T5.2-2} and \eqref{T5.2-3} are consequences of \eqref{T4.1P2} thanks to the \EVP\ (Lem\-ma~\ref{EVP}) applied to the restriction of $f_1$ to $\widehat\Omega$.
Since $\bigcap_{i=1}^n\Omega_i=\emptyset$, condition \eqref{T5.2-3b} is satisfied.
\xqed

As an immediate consequence of \eqref{T5.2-3}, we have
\smallskip

\underline{Claim 2}.
$0\in\sd^F(f_1+f_2+f_3)(\hat x^\circ)$, where $f_1$ is given by \eqref{f1} and
\begin{gather*}
\notag
f_2(\hat u):=\frac{\eps'}{\de}\vertiii{\hat u-\hat x^\circ}_+,\quad
f_3(\hat u):=i_{\widehat\Omega}(\hat u)
\;\;
\text{for all}
\;\;
\hat u\in X^n.
\end{gather*}

Indeed, it follows from \eqref{T5.2-3} that $\hat x^\circ$ is a global minimum of $f_1+f_2+f_3$.
\xqed

Note that $f_1$ and $f_2$ are convex and Lipschitz continuous.
The first two items in the next claim are standard facts of convex analysis, while the last one is a consequence of the definitions.
\smallskip

\underline{Claim 3}.
For all $\hat u:=(u_1,\ldots,u_n)\in X^n$, the following relations hold:
	
{\renewcommand{\theenumi}{\alph{enumi}}
\begin{enumerate}
\item\label{a}
if $(u_{1}^*,\ldots,u_{n}^*) \in \partial f_1(\hat u)$ and
$\vertiii{(u_1-u_n,\ldots,u_{n-1}-u_n)}>0$, then
$\sum_{i=1}^n u_{i}^*=0$, $\vertiii{(u^*_1,\ldots,u^*_{n-1})}=1$
and $\sum_{i=1}^{n-1}\langle u_{i}^*,u_i-u_n\rangle= \vertiii{(u_1-u_n,\ldots,u_{n-1}-u_n)}$;
\item\label{b}
if $\hat u^*\in \partial f_2(\hat u)$, then
$\vertiii{\hat u^*}_+\leq {\eps'}/{\de}$;
\item\label{c}
if $\hat u\in\widehat\Omega$, then $\partial^C f_3(\hat u)=N^C_{\widehat\Omega}(\hat u)$ and $\partial^F f_3(\hat u)= N^F_{\widehat\Omega}(\hat u)$.
\end{enumerate}
}

The next step is to apply a subdifferential sum rule.
\smallskip

\underline{Claim 4}.
Assertion \eqref{T3.1-2} holds true.
\smallskip

Claim 2 implies $0\in\sd^C(f_1+f_2+f_3)(\hat x^\circ)$.
Applying Clarke subdifferential sum rule (Lem\-ma~\ref{SR}\,\eqref{SR.Cl}), we can find subgradients
$\hat y_{1}^*:=(y_{1}^*,\ldots,y_{n}^*) \in \partial f_1(\hat x^\circ)$, $\hat y_{2}^*\in\partial f_2(\hat x^\circ)$ and $\hat y_{3}^*\in\partial^C f_3(\hat x^\circ)$ such that
\begin{gather}
\label{T3.2-12}
\hat y_{1}^*+\hat y_{2}^*+\hat y_{3}^*=0.
\end{gather}
Thanks to \eqref{T5.2-3b},
it follows from the relations in Claim 3 that $\hat y_{3}^*\in N^C_{\widehat\Omega}(\hat x^\circ)$, and
\begin{gather}\label{T3.3-12}
\sum_{i=1}^n y_{i}^*=0, \quad \vertiii{(y_{1}^*,\ldots,y_{n-1}^*)}=1,\\
\notag
\langle (y_{1}^*,\ldots,y_{n-1}^*),(x^\circ_1-x^\circ_n,\ldots,x^\circ_{n-1}-x^\circ_n)\rangle=\vertiii{(x^\circ_1-x^\circ_n,\ldots,x^\circ_{n-1}-x^\circ_n)},\\
\label{T3.3-14}
\vertiii{\hat y^*_{2}}_+\leq \dfrac{\eps'}{\de}.
\end{gather}
Let  $\hat x:=(x_1,\ldots,x_n):=(x^\circ_1,\ldots,x^\circ_n)$ and $\hat x^*:=(x_1^*,\ldots,x_n^*):=-
{\hat y_1^*}$.
By \eqref{T3.2-12}, \eqref{T3.3-12} and \eqref{T3.3-14},
\begin{align*}
d\left(\hat x^*,N^C_{\widehat\Omega}(\hat x)\right)
&\le\vertiii{\hat y_{1}^*+\hat y_{3}^*}_+=\vertiii{\hat y^*_{2}}_+\leq \dfrac{\eps'}{\de}<\dfrac{\eps}{\de}.
\end{align*}
Thus, all the conditions in assertion \eqref{T3.1-2} are satisfied.
\xqed

Next, we prove assertion \eqref{T3.1-3}.
Suppose that $(X^n,\vertiii{\,\cdot\,}_+)$ is Asplund, and
 $\vertiii{\,\cdot\,}$ and $\vertiii{\,\cdot\,}_+$ are compatible in the sense of \eqref{C1} with some $\kappa>0$.
Let $\tau\in(0,1)$.
In view of \eqref{T5.2-2} and \eqref{T5.2-3b}, we can
choose a $\xi>0$ so that
\begin{gather}
\label{T3.2-1}
\xi<\max\left\{\de-\vertiii{\hat x^\circ-\widehat\omega}_+, (\varepsilon-\varepsilon')/\de\right\},
\\
\label{T3.2-5}
\text{and}\;\;
10\kappa\xi<(1-\tau)\vertiii{(x^\circ_1-x^\circ_n,\ldots,x^\circ_{n-1}-x^\circ_n)}.
\end{gather}
The next claim is a consequence of Claim 2, the fuzzy \Fr\ subdifferential sum rule combined with the convex sum rule (Lem\-ma~\ref{SR}\,\eqref{SR.convex} and \eqref{SR.Fr}) and relation \eqref{c} in Claim 3.
\smallskip

\underline{Claim 5}.
There exist points $\hat x:=(x_1,\ldots,x_n)\in\widehat\Omega$, $\hat x':=(x'_1,\ldots,x'_n)\in {X^{n}}$,
$\hat y_{1}^*\in \partial f_1(\hat x')$,
$\hat y_{2}^*\in\partial f_2(\hat x')$ and
$\hat y_{3}^*\in N^F_{\widehat\Omega}(\hat x)$
such that
\begin{gather}
\label{T3.2-2}
\vertiii{\hat x-\hat x^\circ}_+<\xi,\quad \vertiii{\hat x'-\hat x^\circ}_+<\xi,\\
\label{T3.3-3}
\vertiii{\hat y_{1}^*+\hat y_{2}^*+\hat y_{3}^*}_+<\xi.
\end{gather}

\underline{Claim 6}.
$\hat x\in B_\de(\widehat\omega)$, conditions \eqref{L1.8-2} and \eqref{T5.3-3} are satisfied with $\hat x^*:=(x_1^*,\ldots,x_n^*):=-\hat y_{1}^*$, and
\begin{gather}
\label{T3.3-15}
\sum_{i=1}^{n-1}\langle x^*_{i},x'_n-x'_i\rangle =\vertiii{(x'_n-x'_1,\ldots,x'_n-x'_{n-1})}.
\end{gather}

Indeed,
by \eqref{T3.2-1} and \eqref{T3.2-2},
$
\vertiii{\hat x-\widehat\omega}_+
\le\vertiii{\hat x^\circ-\widehat\omega}_++\vertiii{\hat x-\hat x^\circ}_+<\vertiii{\hat x^\circ-\widehat\omega}_++\xi<\de.
$
By \eqref{C1} and \eqref{T3.2-2}, we have
\begin{align*}
\notag
\vertiii{(x'_1-x'_n,\ldots,x'_{n-1}-x'_n)}
&\ge \vertiii{(x^\circ_1-x^\circ_n,\ldots,x^\circ_{n-1}-x^\circ_n)}
\\ \notag
&-\vertiii{(x'_1- x^\circ_1,\ldots,x'_{n-1}-x^\circ_{n-1})} - \vertiii{(x'_n- x^\circ_n,\ldots,x'_{n}-x^\circ_n)_{n-1}}\\
%\label{T3.2-13}
&>\vertiii{(x^\circ_1-x^\circ_n,\ldots,x^\circ_{n-1}-x^\circ_n)}-2\kappa\xi>0,
\end{align*}
and relation \eqref{a} in Claim 3 implies that conditions \eqref{L1.8-2} and \eqref{T3.3-15} are satisfied, while relation \eqref{b} yields \eqref{T3.3-14}.
By \eqref{T3.3-14}, \eqref{T3.2-1} and \eqref{T3.3-3},
\begin{align*}
d\left(\hat x^*,N^F_{\widehat\Omega}(\hat x)\right)
&\le\vertiii{\hat y_{1}^*+\hat y_{3}^*}_+<\vertiii{\hat y_{2}^*}_++\xi\le \dfrac{\eps'}{\de}+\xi<\dfrac{\eps}{\de}.
\end{align*}
This proves \eqref{T5.3-3}.
\xqed

\underline{Claim 7}.
Condition \eqref{T5.3-4} is satisfied.
\smallskip

By \eqref{C1} and \eqref{T3.2-2}, we have
\begin{align}
\label{T3.2-13a}
\vertiii{(x'_1-x_1,\ldots,x'_{n-1}-x_{n-1})} &\le\kappa\vertiii{\hat x'-\hat x}_+\le\kappa(\vertiii{\hat x'-\hat x^\circ}_++\vertiii{\hat x-\hat x^\circ}_+)<2\kappa\xi,
\\
\label{T3.2-13b}
\vertiii{(x'_n-x_n,\ldots,x'_{n}-x_{n})_{n-1}} &\le\kappa\vertiii{\hat x'-\hat x}_+<2\kappa\xi,
\\
\label{T3.3-11a}
\vertiii{(x_1-x_n,\ldots,x_{n-1}-x_n)}&> \vertiii{(x^\circ_1-x^\circ_n,\ldots,x^\circ_{n-1}-x^\circ_n)}-2\kappa\xi,
\\
\label{T3.3-11}
\vertiii{(x'_1-x'_n,\ldots,x'_{n-1}-x'_n)}
&>\vertiii{(x_1-x_n,\ldots,x_{n-1}- x_n)}-4\kappa\xi.
\end{align}
By \eqref{T3.2-5} and \eqref{T3.3-11a},
\begin{align}
\label{T3.2-9}
(1-\tau)\vertiii{(x_1-x_n,\ldots,x_{n-1}-x_n)}
&>10\kappa\xi-2\kappa(1-\tau)\xi>8\kappa\xi.
\end{align}
Using  \eqref{L1.8-2}, \eqref{T3.3-15}, \eqref{T3.2-13a}, \eqref{T3.2-13b}, \eqref{T3.3-11} and \eqref{T3.2-9}, we obtain
\begin{align*}
\sum_{i=1}^{n-1} \langle x_i^*,x_n-x_i\rangle
=& \sum_{i=1}^{n-1}\langle x^*_{i},x'_n-x'_i\rangle+\sum_{i=1}^{n-1}\langle x^*_{i},x'_i-x_i\rangle
+\sum_{i=1}^{n-1}\langle x^*_{i},x_n-x'_n\rangle\\
\overset{\eqref{L1.8-2},\eqref{T3.3-15}}{\ge}& \vertiii{(x'_1-x'_n,\ldots,x'_{n-1}-x'_n)}\\
-&\vertiii{(x'_1-x_1,\ldots,x'_{n-1}-x_{n-1})} -\vertiii{(x'_n-x_n,\ldots,x'_n-x_n)_{n-1}}\\
\overset{\eqref{T3.2-13a}, \eqref{T3.2-13b}}{>} &\vertiii{(x'_1-x'_n,\ldots,x'_{n-1}-x'_n)}-4\kappa\xi\\
\overset{\eqref{T3.3-11}}{>}& \vertiii{(x_1-x_n,\ldots,x_{n-1}-x_n)}-8\kappa\xi\\
\overset{\eqref{T3.2-9}}{>}& \tau\vertiii{(x_1-x_n,\ldots,x_{n-1}-x_n)}.
\end{align*}
This proves \eqref{T5.3-4}.
%The proof is complete.
\end{proof}

\begin{remark}\label{R3}
Suppose $(X,\|\cdot\|)$ is a normed space.
\begin{enumerate}
\item
Conditions \eqref{T5.3-5} and \eqref{T5.3-3} in the conclusions of Theorem~\ref{T3.3} are formulated in terms of normal cones to the aggregate set $\widehat\Omega$.
They can be reformulated in terms of normal cones to the individual sets when $\|\cdot\|$ and $\vertiii{\,\cdot\,}_+$
are compatible in the sense of \eqref{C3} and \eqref{C4}; see Proposition~\ref{P2.3}.
\item
Theorem~\ref{ZhNg} is a particular case of Theorem~\ref{T3.3} with maximum norms of the type \eqref{norm}.
Theorem~\ref{T1.4} is a particular case of Theorem~\ref{T3.3} with $\vertiii{\,\cdot\,}$ of the type \eqref{norm} and $\vertiii{\,\cdot\,}_+$ of the type \eqref{ganorm}:
$$\vertiii{(u_1,\ldots,u_n)}_+:= \max\Big\{\vertiii{(u_1,\ldots,u_{n-1})}, \frac\de\eta\|u_n\|\Big\}
\;\;
\text{for all}
\;\;
u_1,\ldots,u_{n}\in X.$$
The general $p$-weighted ($p\in[1,+\infty]$) version of Theorem~\ref{ZhNg} in \cite[Theorems 3.1 \& 3.4]{ZheNg11} (cf. Remark~\ref{R1}\,\eqref{R1.1}) is a particular case of Theorem~\ref{T3.3} with
$$
\vertiii{(u_1,\ldots,u_{n-1})}:= \left(\sum_{i=1}^{n-1}\|u_i\|^p\right)^{\frac{1}{p}}
\AND
\vertiii{(u_1,\ldots,u_n)}_+:= \left(\sum_{i=1}^{n}\|u_i\|^p\right)^{\frac{1}{p}}.
$$
A more general multi-parameter statement covering both Theorems~\ref{ZhNg} and \ref{T1.4} can be deduced from Theorem~\ref{T3.3} using
$$
\vertiii{(u_1,\ldots,u_{n-1})}:= \Big(\sum_{i=1}^{n-1} (\sigma_i\|u_i\|)^p\Big)^{\frac{1}{p}}
\;\;\text{and}\;\;
\vertiii{(u_1,\ldots,u_n)}_+:= \Big(\sum_{i=1}^{n} (\sigma_i\|u_i\|)^p\Big)^{\frac{1}{p}},
$$
where $\sigma_1,\ldots,\sigma_n$ are given positive numbers
(suggested by the anonymous reviewer of the paper \cite{CuoKruTha24}).
\end{enumerate}
\end{remark}

%\NDC{1/12/24.
%$X$ in  Theorem~\ref{T3.3} is a vector space.
%I think we should suppose in Remark~\ref{R3} that $X$ is a normed space.
%}

The next local version of Theorem~\ref{T3.3} is a direct consequence of this theorem.

\begin{corollary}
%[Generalized separation]
\label{C3.3}
Let $(X,\|\cdot\|)$ and $(X^n,\vertiii{\,\cdot\,})$ be Banach.
Assume that
$\widehat\Omega$ is closed, $\widehat\omega:= (\omega_1,\ldots,\omega_n)\in\widehat\Omega$, $\bx\in X$, $\eps>0$, $\de>0$ and $\rho>0$.
Suppose that $\bigcap_{i=1}^n\Omega_i\cap\overline B_\rho(\bx)=\emptyset$, and
\begin{gather*}
%\label{T3.3-1}
\vertiii{(\omega_1-\bx,\ldots,\omega_{n}-\bx)} <\inf_{\substack{u_i\in\Omega_i\;(i=1,\ldots,n),\; u\in B_\rho(\bx)}} \vertiii{(u_1-u,\ldots,u_{n}-u)}+\varepsilon.
\end{gather*}	
\if{
\NDC{30/11/24.
I think the ball $B_\rho(\bx)$ in the RHS of the above inequality should be replaced by $\overline B_\rho(\bx)$.
Similarly, $\B$ in the RHS of condition \eqref{C3.9-3} should be replaced by $\overline\B$.
}
\AK{3/12/24.
This does not affect the value of the infimum.}
}\fi
The following assertions hold true:
\begin{enumerate}
\item
there exist $\hat x:=(x_1,\ldots,x_n)\in\widehat\Omega\cap B_\de(\widehat\omega)$, {$x_0\in B_\rho(\bx)$} and $\hat x^*:=(x_1^*,\ldots,x_n^*)\in (X^*)^n$ such that $\vertiii{\hat x^*}=1$, and
\begin{gather}
\label{C3.3-4}
{\de}d\left(\hat x^*,N^C_{\widehat\Omega}(\hat x)\right)+ \rho\Big\|\sum_{i=1}^nx_i^*\Big\| <{\eps},\\
\notag
\sum_{i=1}^{n} \langle x_i^*, x_0-x_i\rangle=\vertiii{(x_0-x_1,\ldots,x_0-x_{n})};
\end{gather}
\item
if $(X,\|\cdot\|)$ and $(X^n,\vertiii{\,\cdot\,})$ are Asplund, and the norms $\|\cdot\|$ and $\vertiii{\,\cdot\,}$  are compatible in the sense of \eqref{C5},
then, for any $\tau\in(0,1)$, there exist $\hat x:=(x_1,\ldots,x_n)\in\widehat\Omega\cap B_\de(\widehat\omega)$, {$x_0\in B_\rho(\bx)$} and $\hat x^*:=(x_1^*,\ldots,x_n^*)\in (X^*)^n$ such that $\vertiii{\hat x^*}=1$, and
\begin{gather}
\label{C3.3-6}
{\de}d\left(\hat x^*,N^F_{\widehat\Omega}(\hat x)\right)+ \rho\Big\|\sum_{i=1}^nx_i^*\Big\| <{\eps},\\
\notag
\sum_{i=1}^{n} \langle x_i^*, x_0-x_i\rangle>\tau\vertiii{(x_0-x_1,\ldots,x_0-x_{n})}.
\end{gather}
\end{enumerate}
\end{corollary}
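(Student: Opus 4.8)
Corollary~\ref{C3.3} should follow by applying Theorem~\ref{T3.3} to the enlarged collection of $n+1$ sets obtained by adjoining the ball $\Omega_{n+1}:=\overline B_\rho(\bx)$ to the original sets $\Omega_1,\ldots,\Omega_n$, exactly in the spirit of the reduction described in the Introduction (Definition~\ref{D1.1} replaces a collection with nonempty intersection by one with empty intersection by adding a neighbourhood of $\bx$). First I would set $\widetilde\Omega_i:=\Omega_i$ for $i=1,\ldots,n$ and $\widetilde\Omega_{n+1}:=\overline B_\rho(\bx)$, with the distinguished (``last'') set now being $\widetilde\Omega_{n+1}$, so that $\bigcap_{i=1}^{n+1}\widetilde\Omega_i=\bigcap_{i=1}^n\Omega_i\cap\overline B_\rho(\bx)=\es$ by hypothesis. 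The aggregate set $\widetilde{\widehat\Omega}=\widehat\Omega\times\overline B_\rho(\bx)$ is closed in $X^{n+1}$, and $\widetilde{\widehat\omega}:=(\omega_1,\ldots,\omega_n,\bx)$ belongs to it.

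\textbf{Choice of norms.} The role of the ``first $n-1$'' norm $\vertiii{\,\cdot\,}$ in Theorem~\ref{T3.3} (which here acts on $X^{((n+1)-1)}=X^n$) is played by the given norm $\vertiii{\,\cdot\,}$ on $X^n$. For the ``$+$'' norm on $X^{n+1}$ I would take $\vertiii{(u_1,\ldots,u_n,u_{n+1})}_+:=\max\{\vertiii{(u_1,\ldots,u_n)}, (\de/\rho)\|u_{n+1}\|\}$, i.e. the construction \eqref{ganorm} with $\ga=\de/\rho$; by the remark following Proposition~\ref{P02.3} this automatically satisfies compatibility condition \eqref{C1}, and $(X^{n+1},\vertiii{\,\cdot\,}_+)$ is Banach (Asplund) whenever $(X,\|\cdot\|)$ and $(X^n,\vertiii{\,\cdot\,})$ are. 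With these choices, the quantity $f_1$ in the proof of Theorem~\ref{T3.3} becomes $\hat u\mapsto\vertiii{(u_1-u_{n+1},\ldots,u_n-u_{n+1})}$, so hypothesis \eqref{T3.3-1} of Theorem~\ref{T3.3} reads precisely as the displayed inequality in the Corollary (with $u:=u_{n+1}$ ranging over $\overline B_\rho(\bx)$, which gives the same infimum as over $B_\rho(\bx)$). Apply Theorem~\ref{T3.3}.

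\textbf{Translating the conclusion.} Theorem~\ref{T3.3} returns $(x_1,\ldots,x_n,x_0)\in\widehat\Omega\times\overline B_\rho(\bx)$ within $B_\de(\widetilde{\widehat\omega})$ in the $\vertiii{\,\cdot\,}_+$ norm — which by the definition of $\vertiii{\,\cdot\,}_+$ forces $(x_1,\ldots,x_n)\in B_\de(\widehat\omega)$ and $\|x_0-\bx\|<\rho$ — together with $(x_1^*,\ldots,x_n^*,x_{n+1}^*)\in(X^*)^{n+1}$ satisfying $\sum_{i=1}^{n+1}x_i^*=0$, $\vertiii{(x_1^*,\ldots,x_n^*)}=1$, the alignment equality/inequality with $\vertiii{(x_0-x_1,\ldots,x_0-x_n)}$, and the normal-cone estimate $d(\hat x^{*,\mathrm{ext}},N_{\widetilde{\widehat\Omega}}(\cdot))<\eps/\de$. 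Writing $\hat x^*:=(x_1^*,\ldots,x_n^*)$, the relation $\sum_{i=1}^{n+1}x_i^*=0$ gives $x_{n+1}^*=-\sum_{i=1}^n x_i^*$. Now I would compute the normal cone to the last factor: since $\overline B_\rho(\bx)$ is convex with $x_0\in B_\rho(\bx)$ an interior point, $N_{\overline B_\rho(\bx)}(x_0)=\{0\}$, so $N^C_{\widetilde{\widehat\Omega}}(\hat x,x_0)=N^C_{\widehat\Omega}(\hat x)\times\{0\}$ (and similarly for $N^F$); hence, using the definition of $\vertiii{\,\cdot\,}_+$, $d\big((\hat x^*,x_{n+1}^*),N^C_{\widehat\Omega}(\hat x)\times\{0\}\big)=\max\{d(\hat x^*,N^C_{\widehat\Omega}(\hat x)),(\de/\rho)\|x_{n+1}^*\|\}$, and the bound $<\eps/\de$ on this max is exactly $\de\,d(\hat x^*,N^C_{\widehat\Omega}(\hat x))<\eps$ together with $\rho\|\sum_{i=1}^n x_i^*\|<\eps$ — i.e. \eqref{C3.3-4}. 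For the Asplund part one extra point is needed: Theorem~\ref{T3.3}\,(ii) only asserts $d(\hat x^{*,\mathrm{ext}},N^F_{\widetilde{\widehat\Omega}}(\hat x,x_0))<\eps/\de$ without a product split of the Fréchet normal cone, so I would invoke Proposition~\ref{P2.3} — this is where compatibility \eqref{C5} (assumed in part (ii) of the Corollary) enters, guaranteeing $\|\cdot\|$ and $\vertiii{\,\cdot\,}_+$ are compatible in the sense of \eqref{C3}--\eqref{C4} on the relevant factors so that $N^F_{\widetilde{\widehat\Omega}}(\hat x,x_0)=N^F_{\widehat\Omega}(\hat x)\times N^F_{\overline B_\rho(\bx)}(x_0)=N^F_{\widehat\Omega}(\hat x)\times\{0\}$. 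Finally the alignment relations transcribe verbatim with the index running $1\le i\le n$ and $u_n$ replaced by $x_0$.

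\textbf{Main obstacle.} The only genuinely delicate point is the bookkeeping around compatibility: one must check that with $\vertiii{\,\cdot\,}_+$ defined as the $\max$ above, condition \eqref{C1} for the pair $(\vertiii{\,\cdot\,},\vertiii{\,\cdot\,}_+)$ holds (immediate from \eqref{ganorm}), and, for part (ii), that the hypothesis \eqref{C5} on $(\|\cdot\|,\vertiii{\,\cdot\,})$ propagates to give the product decomposition of $N^F$ over the $(n{+}1)$st factor via Proposition~\ref{P2.3} — equivalently that $\|\cdot\|$ and $\vertiii{\,\cdot\,}_+$ are \eqref{C3}/\eqref{C4}-compatible. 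Everything else (the ball has zero normal cone at interior points, the infimum over $B_\rho(\bx)$ equals that over $\overline B_\rho(\bx)$, the $\vertiii{\,\cdot\,}_+$-ball factorizes, rescaling $\eps/\de$ to split the distance bound) is routine.
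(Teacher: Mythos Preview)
Your overall strategy---adjoin $\Omega_{n+1}:=\overline B_\rho(\bx)$, take $\vertiii{\,\cdot\,}_+$ of type \eqref{ganorm} with $\gamma=\de/\rho$, and apply Theorem~\ref{T3.3} with $n+1$ sets---is exactly the paper's approach. Two points need correction.

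\textbf{Dual norm.} The distance in \eqref{T5.3-5}/\eqref{T5.3-3} is computed in the \emph{dual} norm $\vertiii{\,\cdot\,}_+$. Since the primal $\vertiii{\,\cdot\,}_+$ is a max-type norm, its dual is
\[
\vertiii{(u_1^*,\ldots,u_{n+1}^*)}_+=\vertiii{(u_1^*,\ldots,u_n^*)}+\tfrac{\rho}{\de}\|u_{n+1}^*\|,
\]
a \emph{sum}, not a max (and the coefficient on the last component is $\rho/\de$, not $\de/\rho$). Hence
\[
d\big((\hat x^*,x_{n+1}^*),\,N_{\widehat\Omega}(\hat x)\times\{0\}\big)
=d\big(\hat x^*,N_{\widehat\Omega}(\hat x)\big)+\tfrac{\rho}{\de}\|x_{n+1}^*\|,
\]
and the bound $<\eps/\de$ gives precisely $\de\,d(\hat x^*,N_{\widehat\Omega}(\hat x))+\rho\|\sum_i x_i^*\|<\eps$, i.e.\ \eqref{C3.3-4}. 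Your max version would yield only the two inequalities $\de\,d(\cdot)<\eps$ and $\rho\|\sum_i x_i^*\|<\eps$ separately, which is strictly weaker than \eqref{C3.3-4}.

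\textbf{Role of \eqref{C5}.} The remark after Proposition~\ref{P02.3} says that the construction \eqref{ganorm} automatically gives \eqref{C2}, not \eqref{C1}. Condition \eqref{C1}---which Theorem~\ref{T3.3}\,\eqref{T3.1-3} requires---follows from \eqref{C2} together with \eqref{C5} via Proposition~\ref{P02.3}\,\eqref{P2.5-1}; this is where \eqref{C5} actually enters. By contrast, the two-factor product decomposition $N_{\widehat\Omega\times\overline B_\rho(\bx)}=N_{\widehat\Omega}\times N_{\overline B_\rho(\bx)}$ (the Proposition~\ref{P2.3} step) needs only the obvious two-factor analogues of \eqref{C3}/\eqref{C4} between $\vertiii{\,\cdot\,}$, $\|\cdot\|$ and $\vertiii{\,\cdot\,}_+$, and these hold automatically for the max-type $\vertiii{\,\cdot\,}_+$ without \eqref{C5}.
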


\begin{proof}
Define the norm $\vertiii{\,\cdot\,}_+$ on $X^{n+1}$ by
\begin{gather*}
\vertiii{(u_1,\ldots,u_{n+1})}_+:= \max\Big\{\vertiii{(u_1,\ldots,u_{n})}, \frac\de\rho\|u_{n+1}\|\Big\}
\;\;
\text{for all}
\;\;
u_1,\ldots,u_{n+1}\in X,
\end{gather*}
and observe that $(X^{n+1},\vertiii{\,\cdot\,}_+)$ is Banach or Asplund whenever $(X,\|\cdot\|)$ and $(X^n,\vertiii{\,\cdot\,})$ are.
The corresponding dual norm is given by
\begin{gather*}
\vertiii{(u^*_1,\ldots,u^*_{n+1})}_+= \vertiii{(u^*_1,\ldots,u^*_n)}+\frac\rho\de\|u^*_{n+1}\|
\;\;
\text{for all}
\;\;
u^*_1,\ldots,u^*_{n+1}\in X^*.
\end{gather*}
Moreover, if $\|\cdot\|$ and $\vertiii{\,\cdot\,}$ satisfy condition \eqref{C5} with some $\kappa>0$, then
$
\vertiii{(u,\ldots,u)_n}\le
\kappa\|u\|
=\frac{\kappa\rho}\de\frac\de\rho\|u\|
$
for all $u\in X$, and consequently,  $\vertiii{\,\cdot\,}$ and $\vertiii{\,\cdot\,}_+$ are compatible in the sense of \eqref{C1} with $\kappa':=\max\{1,\frac{\kappa\rho}\de\}$.
Set $\Omega_{n+1}:=\overline B_\rho(\bx)$ and $\omega_{n+1}:=\bx$, and apply Theorem~\ref{T3.3} with $n+1$ in place of $n$.
Observe that
$N^C_{\Omega_{n+1}}(u)=N^F_{\Omega_{n+1}}(u)=\{0\}$ for all $u\in B_\rho(\bx)$.
\end{proof}

For what follows, we need a  more general setting involving a collection of points $x^\circ_1,\ldots,x^\circ_n\in X$ instead of a single point $\bx$.
The next corollary is a direct consequence of Corollary~\ref{C3.3} applied to the
sets $\widehat\Omega_i:=\Omega_i-x^\circ_i$, and points $\widehat\omega_i:=\omega_i-x^\circ_i\in\widehat\Omega_i$ $(i=1,\ldots,n)$ and $\bx:=0$.	

\begin{corollary}
\label{C3.9}
Let $(X,\|\cdot\|)$ and $(X^n,\vertiii{\,\cdot\,})$ be Banach.
Assume that
$\widehat\Omega$ is closed, $\widehat\omega:= (\omega_1,\ldots,\omega_n)\in\widehat\Omega$, $x^\circ_i\in X$ $(i=1,\ldots,n)$, $\eps>0$, $\de>0$ and $\rho>0$.
Suppose that  ${\bigcap}_{i=1}^n(\Omega_i-x^\circ_i)\cap (\rho\overline\B)=\emptyset$, and
\begin{gather}
\label{C3.9-3}
\vertiii{(\omega_1-x^\circ_1,\ldots,\omega_{n}-x^\circ_n)} <\inf_{\substack{u_i\in\Omega_i\;(i=1,\ldots,n),\; u\in\rho\B}} \vertiii{(u_1-x^\circ_1-u,\ldots,u_{n}-x^\circ_n-u)}+\varepsilon.
\end{gather}	
The following assertions hold true:
\begin{enumerate}
\item\label{C3.3-1}
there exist $\hat x:=(x_1,\ldots,x_n)\in\widehat\Omega\cap B_\de(\widehat\omega)$, {$x_0\in\rho\B$} and $\hat x^*:=(x_1^*,\ldots,x_n^*)\in (X^*)^n$ such that $\vertiii{\hat x^*}=1$,
condition \eqref{C3.3-4} is satisfied, and
\begin{gather}
\notag
\sum_{i=1}^{n} \langle x_i^*, x_0+x^\circ_i-x_i\rangle= \vertiii{(x_0+x^\circ_1-x_1,\ldots,x_0+x^\circ_n-x_{n})};
\end{gather}
\item\label{C3.3-2}
if $(X,\|\cdot\|)$ and $(X^n,\vertiii{\,\cdot\,})$ are Asplund, and the norms $\|\cdot\|$ and $\vertiii{\,\cdot\,}$ are compatible in the sense of \eqref{C5},
then, for any $\tau\in(0,1)$, there exist $\hat x:=(x_1,\ldots,x_n)\in\widehat\Omega\cap B_\de(\widehat\omega)$, {$x_0\in\rho\B$} and $\hat x^*:=(x_1^*,\ldots,x_n^*)\in (X^*)^n$ such that $\vertiii{\hat x^*}=1$,
condition \eqref{C3.3-6} is satisfied, and
\begin{gather}
\notag
\sum_{i=1}^{n} \langle x_i^*, x_0+x^\circ_i-x_i\rangle>\tau \vertiii{(x_0+x^\circ_1-x_1,\ldots,x_0+x^\circ_n-x_{n})}.
\end{gather}
\end{enumerate}
\end{corollary}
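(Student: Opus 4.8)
The plan is to obtain the statement by a straightforward change of variables reducing it to Corollary~\ref{C3.3}, exactly as announced immediately before the statement. First I would introduce the translated data: set $\Omega_i':=\Omega_i-x^\circ_i$ and $\omega_i':=\omega_i-x^\circ_i$ for $i=1,\ldots,n$, write $\widehat\Omega':=\Omega_1'\times\ldots\times\Omega_n'$ and $\widehat\omega':=(\omega_1',\ldots,\omega_n')$, and take $\bx:=0$. Since a translation is a homeomorphism of $X^n$ carrying $\widehat\Omega$ onto $\widehat\Omega'=\widehat\Omega-(x^\circ_1,\ldots,x^\circ_n)$, the set $\widehat\Omega'$ is closed, and plainly $\omega_i'\in\Omega_i'$, so $\widehat\omega'\in\widehat\Omega'$. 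The non-intersection hypothesis of Corollary~\ref{C3.3} for the primed data, namely $\bigcap_{i=1}^n\Omega_i'\cap\overline B_\rho(0)=\emptyset$, is precisely the assumption $\bigcap_{i=1}^n(\Omega_i-x^\circ_i)\cap(\rho\overline\B)=\emptyset$. The quantitative hypothesis of Corollary~\ref{C3.3} for the primed data reads
\begin{gather*}
\vertiii{(\omega_1',\ldots,\omega_n')}<\inf_{u_i'\in\Omega_i'\;(i=1,\ldots,n),\;u\in B_\rho(0)}\vertiii{(u_1'-u,\ldots,u_n'-u)}+\varepsilon,
\end{gather*}
and the substitution $u_i'=u_i-x^\circ_i$ (with $u_i$ running over $\Omega_i$) turns this into exactly \eqref{C3.9-3}; note that all balls $B_\de(\,\cdot\,)$ are taken with respect to the \emph{same} norm $\vertiii{\,\cdot\,}$ on $X^n$ in both corollaries.

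Next I would apply Corollary~\ref{C3.3} to the primed data. Assertion~(i) yields points $\hat y:=(y_1,\ldots,y_n)\in\widehat\Omega'\cap B_\de(\widehat\omega')$, $x_0\in B_\rho(0)=\rho\B$, and $\hat x^*:=(x_1^*,\ldots,x_n^*)\in(X^*)^n$ with $\vertiii{\hat x^*}=1$, satisfying \eqref{C3.3-4} with $\widehat\Omega$, $\hat x$ replaced by $\widehat\Omega'$, $\hat y$, together with the companion equality $\sum_{i=1}^n\langle x_i^*,x_0-y_i\rangle=\vertiii{(x_0-y_1,\ldots,x_0-y_n)}$; assertion~(ii), under the Asplund assumption and compatibility \eqref{C5} between $\|\cdot\|$ and $\vertiii{\,\cdot\,}$, gives the analogous conclusions with \eqref{C3.3-6} and the inequality version. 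I would then translate back by setting $x_i:=y_i+x^\circ_i$, so that $x_i\in\Omega_i$, hence $\hat x:=(x_1,\ldots,x_n)\in\widehat\Omega$, while $\hat x-\widehat\omega=\hat y-\widehat\omega'$ as elements of $X^n$, so $\vertiii{\hat x-\widehat\omega}=\vertiii{\hat y-\widehat\omega'}<\de$; the dual vectors $x_i^*$ are kept unchanged.

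It remains only to check that the conclusions transfer verbatim, which is routine. The \Fr\ and Clarke normal cones are invariant under translation of the set, both for the individual sets, $N^F_{\Omega_i'}(y_i)=N^F_{\Omega_i}(x_i)$ and $N^C_{\Omega_i'}(y_i)=N^C_{\Omega_i}(x_i)$, and for the aggregate, since only differences of points enter the defining expressions \eqref{NC}--\eqref{NCC}; consequently $d(\hat x^*,N^C_{\widehat\Omega}(\hat x))=d(\hat x^*,N^C_{\widehat\Omega'}(\hat y))$ and likewise for $N^F$, so \eqref{C3.3-4} (resp.\ \eqref{C3.3-6}) holds as stated, and $\sum_{i=1}^n x_i^*$ is the same quantity before and after translation. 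Finally $x_0-y_i=x_0-(x_i-x^\circ_i)=x_0+x^\circ_i-x_i$, so the companion equality becomes $\sum_{i=1}^n\langle x_i^*,x_0+x^\circ_i-x_i\rangle=\vertiii{(x_0+x^\circ_1-x_1,\ldots,x_0+x^\circ_n-x_n)}$, and the inequality version transforms the same way. I do not expect any genuine obstacle here: no new estimate beyond what Corollary~\ref{C3.3} supplies is needed, and the only thing to watch is the bookkeeping of which product space carries which norm and the direction of the translation.
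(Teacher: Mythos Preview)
Your proposal is correct and follows exactly the route taken in the paper: the proof there is the single sentence that Corollary~\ref{C3.9} follows from Corollary~\ref{C3.3} applied to the translated sets $\Omega_i-x^\circ_i$, points $\omega_i-x^\circ_i$, and $\bx:=0$. Your write-up just spells out the routine bookkeeping (closedness under translation, invariance of normal cones, the identity $x_0-y_i=x_0+x^\circ_i-x_i$) that the paper leaves implicit.
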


\begin{remark}
\label{R4}
\begin{enumerate}
\item\label{R4-1}
Corollary~\ref{C3.3} is a particular case of Corollary~\ref{C3.9} with $x^\circ_1=\ldots=x^\circ_n:=\bx$.
\item\label{R4-2}
When applying Corollary~\ref{C3.9}, it can be convenient to replace \eqref{C3.9-3} by a simpler (but slightly stronger) condition $\vertiii{(\omega_1-x^\circ_1,\ldots,\omega_n-x^\circ_n)} <\varepsilon$.
\end{enumerate}
\end{remark}

\section{Approximate stationarity and transversality}
\label{S4}

As an application of the generalized separation results in Section~\ref{S3}, we prove dual necessary conditions for an abstract product norm extension of the approximate stationarity
%and transversality
from \cite{Kru98,Kru06,Kru09,BuiKru19,CuoKru21.2}.
The next definition extends \cite[Definitions 2.1\,(iv) and 3.1]{BuiKru19}.

\begin{definition}
\label{D3.5}
Let $(X,\|\cdot\|)$ and $(X^n,\vertiii{\,\cdot\,})$ be normed spaces, and $\bx\in\bigcap_{i=1}^n\Omega_i$.
\begin{enumerate}
\item\label{D4.1-1}
Let $\al>0$.
The collection $\{\Omega_1,\ldots,\Omega_n\}$ is approximately $\al$-stationary at $\bx$ if, for any $\varepsilon>0$, there exist $\rho\in(0,\varepsilon)$,  $x_i\in\Omega_i$ and  $a_i\in X$ $(i=1,\ldots,n)$ such that $\vertiii{(x_1-\bx,\ldots,x_n-\bx)}<\varepsilon$,
$\vertiii{(a_1,\ldots,a_n)}<\al\rho$, and
$\bigcap_{i=1}^n(\Omega_i-x_i-a_i)\cap (\rho\B)=\emptyset$.
\item\label{D4.1-2}
The collection $\{\Omega_1,\ldots,\Omega_n\}$ is approximately stationary at $\bx$ if it is approximately $\al$-sta\-tionary at $\bx$ for all $\al>0$.
\end{enumerate}
\end{definition}	

The next theorem establishes dual necessary conditions for approximate $\al$-statio\-narity.

\begin{theorem}
\label{T4.3}
Let $(X,\|\cdot\|)$ and $(X^n,\vertiii{\,\cdot\,})$ be Banach, and compatibility condition \eqref{C6} be satisfied.
Assume that $\widehat\Omega$ is closed, $\bx\in\bigcap_{i=1}^n\Omega_i$, and $\al>0$.
Suppose that $\{\Omega_1,\ldots,\Omega_n\}$ is approximately $\al$-sta\-tionary at $\bx$.
The following assertions hold true:
\begin{enumerate}
\item\label{T4.2-1}
for any $\varepsilon>0$, $\be>\al$ and $\tau\in(0,1)$, there exist $\hat x:=(x_1,\ldots,x_n), (x'_1,\ldots,x'_n)\in\widehat\Omega$, $\hat a:=(a_1,\ldots,a_n)\in X^n$, {$x_0\in X$} and $\hat x^*:=(x_1^*,\ldots,x_n^*)\in N^C_{\widehat\Omega}(\hat x)$ such that
\begin{gather}
\label{T4.3-3}
\hspace{-6mm}
\vertiii{(x_1-\bx,\ldots,x_n-\bx)}<\varepsilon,\;\;
\vertiii{(x'_1-\bx,\ldots,x'_n-\bx)}<\varepsilon,\;\;
\vertiii{\hat a}<\varepsilon,\;\;
\|x_0\|<\varepsilon,\\
\label{T4.3-5}
\Big\|{\sum_{i=1}^nx_i^*}\Big\|<\be,\quad
\vertiii{\hat x^*}=1,\\
\label{T4.3-2}
\hspace{-6mm}
\sum_{i=1}^{n}\langle x_i^*,x_0+a_i+x'_i-x_i\rangle >\tau\vertiii{(x_0+a_1+x'_1-x_1,\ldots,x_0+a_n+x'_n-x_n)};
\end{gather}	
\item\label{T4.2-2}
if $(X,\|\cdot\|)$ and $(X^n,\vertiii{\,\cdot\,})$ are Asplund, and  compatibility condition \eqref{C5} is satisfied,
then $N^C$ in \eqref{T4.2-1} can be replaced by $N^F$.
\end{enumerate}
\end{theorem}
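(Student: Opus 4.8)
The plan is to derive both assertions from Corollary~\ref{C3.9}, applied to suitably translated data, and then to perform a normalization step that moves the dual vector obtained there from ``near'' to ``inside'' the relevant normal cone to $\widehat\Omega$.

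Fix $\eps>0$, $\be>\al$ and $\tau\in(0,1)$, set $\be':=(\al+\be)/2\in(\al,\be)$, and let $\kappa>0$ be the constant in \eqref{C6}. First I would invoke the approximate $\al$-stationarity of $\{\Omega_1,\ldots,\Omega_n\}$ at $\bx$ with a tolerance $\eps'>0$ (chosen at the end, small relative to $\eps$, $\be-\be'$, $1-\tau$ and $\kappa$): this produces $\rho\in(0,\eps')$, points $x'_i\in\Omega_i$ and vectors $a_i\in X$ such that, with $\hat a:=(a_1,\ldots,a_n)$, one has $\vertiii{(x'_1-\bx,\ldots,x'_n-\bx)}<\eps'$, $\vertiii{\hat a}<\al\rho$, and $\bigcap_{i=1}^n(\Omega_i-x'_i-a_i)\cap(\rho\B)=\es$. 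Put $x^\circ_i:=x'_i+a_i$ and $\omega_i:=x'_i\in\Omega_i$; then for every $\rho'\in(0,\rho)$ one has $\bigcap_{i=1}^n(\Omega_i-x^\circ_i)\cap(\rho'\overline\B)=\es$, and $\vertiii{(\omega_1-x^\circ_1,\ldots,\omega_n-x^\circ_n)}=\vertiii{\hat a}<\al\rho$. Now I would apply Corollary~\ref{C3.9}\,\eqref{C3.3-1} to the sets $\Omega_i$, the reference points $\omega_i$ and the centres $x^\circ_i$, with a radius $\rho'\in(0,\rho)$ satisfying $\rho'/\rho>\al/\be'$, with $\eps_0\in(\vertiii{\hat a},\be'\rho')$ (a nonempty interval, since $\be'>\al$ gives $\vertiii{\hat a}<\al\rho<\be'\rho'$), and with $\de_0:=\eps/2$, using Remark~\ref{R4}\,\eqref{R4-2} to replace \eqref{C3.9-3} by $\vertiii{(\omega_1-x^\circ_1,\ldots,\omega_n-x^\circ_n)}<\eps_0$. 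Writing $v_i:=x_0+x^\circ_i-x_i=x_0+a_i+x'_i-x_i$ and $\hat v:=(v_1,\ldots,v_n)$, this yields $\hat x:=(x_1,\ldots,x_n)\in\widehat\Omega\cap B_{\de_0}(\widehat\omega)$ with $\widehat\omega:=(x'_1,\ldots,x'_n)$, a point $x_0\in\rho'\B$, and $\hat w^*:=(w_1^*,\ldots,w_n^*)\in(X^*)^n$ with $\vertiii{\hat w^*}=1$, $\de_0\,d(\hat w^*,N^C_{\widehat\Omega}(\hat x))+\rho'\|\sum_{i=1}^nw_i^*\|<\eps_0$, and $\sum_{i=1}^{n}\langle w_i^*,v_i\rangle=\vertiii{\hat v}$.

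Set $\mu:=\eps_0/\de_0$; then $\mu=2\eps_0/\eps<2\be'\eps'/\eps$, which can be made as small as desired by shrinking $\eps'$. From the last display, $d(\hat w^*,N^C_{\widehat\Omega}(\hat x))<\mu$, so pick $\hat y^*\in N^C_{\widehat\Omega}(\hat x)$ with $\vertiii{\hat w^*-\hat y^*}<\mu$; for $\mu<1$ one has $\vertiii{\hat y^*}\ge1-\mu>0$, hence $\hat x^*:=\hat y^*/\vertiii{\hat y^*}$ still belongs to the cone $N^C_{\widehat\Omega}(\hat x)$, satisfies $\vertiii{\hat x^*}=1$, and has $\vertiii{\hat x^*-\hat w^*}\le2\mu$. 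Also $\hat v\ne 0$, since $\hat v=0$ would give $x_0\in\bigcap_{i=1}^n(\Omega_i-x^\circ_i)\cap\rho'\overline\B=\es$; thus $\vertiii{\hat v}>0$. I would then check assertion~\eqref{T4.2-1} for $\hat x$, $(x'_1,\ldots,x'_n)$, $\hat a$, $x_0$ and $\hat x^*$: conditions~\eqref{T4.3-3} are immediate from the triangle inequality together with $\vertiii{\hat x-\widehat\omega}<\de_0=\eps/2$, $\vertiii{(x'_1-\bx,\ldots,x'_n-\bx)}<\eps'$, $\vertiii{\hat a}<\al\rho$ and $\|x_0\|\le\rho'<\eps'$, once $\eps'$ is small; in \eqref{T4.3-5}, $\vertiii{\hat x^*}=1$ holds by construction, while \eqref{C6} gives $\|\sum_{i=1}^nx_i^*\|\le\|\sum_{i=1}^nw_i^*\|+\sum_{i=1}^n\|x_i^*-w_i^*\|\le\eps_0/\rho'+\kappa\vertiii{\hat x^*-\hat w^*}<\be'+2\kappa\mu<\be$ provided $2\kappa\mu<\be-\be'$; and \eqref{T4.3-2} follows from $\sum_{i=1}^n\langle x_i^*,v_i\rangle=\sum_{i=1}^n\langle w_i^*,v_i\rangle+\langle\hat x^*-\hat w^*,\hat v\rangle\ge(1-\vertiii{\hat x^*-\hat w^*})\vertiii{\hat v}\ge(1-2\mu)\vertiii{\hat v}>\tau\vertiii{\hat v}$ provided $2\mu<1-\tau$. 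Choosing $\eps'$ small enough at the outset makes all these requirements on $\mu$, as well as $\de_0+\eps'<\eps$ and $\al\eps'<\eps$, hold simultaneously, which proves~\eqref{T4.2-1}.

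For assertion~\eqref{T4.2-2}, assume in addition that $(X,\|\cdot\|)$ and $(X^n,\vertiii{\,\cdot\,})$ are Asplund and that \eqref{C5} holds. The same argument works with Corollary~\ref{C3.9}\,\eqref{C3.3-2} in place of \eqref{C3.3-1}: it produces $\hat w^*$ with $\vertiii{\hat w^*}=1$, the bound $\de_0\,d(\hat w^*,N^F_{\widehat\Omega}(\hat x))+\rho'\|\sum_{i=1}^nw_i^*\|<\eps_0$, and $\sum_{i=1}^n\langle w_i^*,v_i\rangle>\tau'\vertiii{\hat v}$ for a preassigned $\tau'\in(0,1)$; taking $\tau':=(1+\tau)/2$, normalizing inside the cone $N^F_{\widehat\Omega}(\hat x)$ exactly as before, and additionally requiring $\tau'-2\mu>\tau$, one obtains $\hat x^*\in N^F_{\widehat\Omega}(\hat x)$ with all the stated properties. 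The only genuine obstacle is this normalization step: Corollary~\ref{C3.9} yields a unit vector merely \emph{close} to $N^C_{\widehat\Omega}(\hat x)$ (resp.\ $N^F_{\widehat\Omega}(\hat x)$), whereas the theorem demands both membership in the cone and $\vertiii{\hat x^*}=1$; reconciling these forces a rescaling of the near-projection onto the cone, and compatibility condition~\eqref{C6} is precisely what guarantees that the accompanying $\vertiii{\,\cdot\,}$-perturbation does not destroy the bound $\|\sum_{i=1}^nx_i^*\|<\be$.
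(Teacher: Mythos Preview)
Your proposal is correct and follows essentially the same route as the paper: apply Corollary~\ref{C3.9} to the translated data $x^\circ_i=x'_i+a_i$, $\omega_i=x'_i$, and then project the resulting unit dual vector into $N^C_{\widehat\Omega}(\hat x)$ (resp.\ $N^F_{\widehat\Omega}(\hat x)$) and renormalize, using \eqref{C6} to control $\big\|\sum_i x_i^*\big\|$. The only differences are cosmetic parameter choices: the paper takes $\eps':=\al\rho$, $\de:=\al\rho^{1/2}$ (so $\eps'/\de=\rho^{1/2}$ and $\eps'/\rho=\al$), whereas you fix $\de_0=\eps/2$, introduce an intermediate $\be'$, and shrink $\rho$ to $\rho'$; your extra step $\rho'<\rho$ also makes the closed-ball hypothesis of Corollary~\ref{C3.9} cleanly satisfied.
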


\begin{proof}
Let $\eps>0$, $\be>\al$, $\tau\in(0,1)$, and  compatibility condition \eqref{C6} be satisfied with some $\kappa>0$. Choose a number $\xi>0$ so that
\begin{gather}
\label{T4.3P3}
2\xi<1-\tau,\quad \al\xi+\xi^2<\eps\AND \frac{\al+\kappa\xi}{1-\xi}<\be.
\end{gather}
By Definition~\ref{D3.5}\,\eqref{D4.1-1}, there exist a $\rho\in(0,\xi^2)$, $\hat x':=(x'_1,\ldots,x'_n)\in\widehat\Omega$ and $\hat a:=(a_1,\ldots,a_n)\in X^n$ such that $\vertiii{(x'_1-\bx,\ldots,x'_n-\bx)}<\xi^2$,
$\vertiii{\hat a}<\al\rho$, and $\bigcap_{i=1}^n(\Omega_i-x'_i-a_i)\cap (\rho\B)=\emptyset$.
{Taking a slightly smaller $\rho$, we can ensure that
$\bigcap_{i=1}^n(\Omega_i-x'_i-a_i)\cap (\rho\overline\B)=\emptyset$.}
Set $\eps':=\al\rho$ and $\de:=\al\rho^{\frac12}$.
By Corollary~\ref{C3.9}\,\eqref{C3.3-1} (see also Remark~\ref{R4}\,\eqref{R4-2}),
there exist
$\hat x:=(x_1,\ldots,x_n)\in\widehat\Omega\cap B_\de(\hat x')$, {$x_0\in\rho\B$} and $\hat x'^*:=(x_1'^*,\ldots,x_n'^*)\in(X^*)^n$ such that $\vertiii{\hat x'^*}=1$ and
\begin{gather}
\label{T4.3-4}
\de d\left(\hat x'^*,N^C_{\widehat\Omega}(\hat x)\right)+ \rho\Big\|\sum_{i=1}^nx_i'^*\Big\|<\eps',\\
\label{T4.3-9}
\sum_{i=1}^{n}\langle x_i'^*,x_0+a_i+x'_i-x_i\rangle=m,
\end{gather}
where $m:=\vertiii{(x_0+a_1+x'_1-x_1,\ldots,x_0+a_n+x'_n-x_n)}$.
In view of \eqref{T4.3P3},
\begin{gather*}
\vertiii{(x'_1-\bx,\ldots,x'_n-\bx)}<\xi^2<\varepsilon,\;\;
\vertiii{\hat a}<\al\rho<\al\xi^2<\al\xi<\varepsilon,\;\;
\|x_0\|<\rho<\xi^2<\varepsilon,\\
\vertiii{(x_1-\bx,\ldots,x_n-\bx)}\le 	
\vertiii{\hat x-\hat x'}+
\vertiii{(x'_1-\bx,\ldots,x'_n-\bx)}<\de+\xi^2< \al\xi+\xi^2<\varepsilon.
\end{gather*}	
This proves \eqref{T4.3-3}.
By \eqref{T4.3-4},
\begin{gather*}
d\left(\hat x'^*,N^C_{\widehat\Omega}(\hat x)\right) <\frac{\eps'}{\de}= \frac{\al\rho}{\al\rho^{\frac12}}<\xi,\quad \Big\|\sum_{i=1}^nx_i'^*\Big\|<\frac{\eps'}{\rho}=\al.
\end{gather*}	
Thus, there is a $\hat z^*:=(z_1^*,\ldots,z_n^*)\in N^C_{\widehat\Omega}(\hat x)$ such that $\vertiii{\hat x'^*-\hat z^*}<\xi$, and consequently, ${0<1-\xi}<\vertiii{\hat z^*}<1+\xi$.
By \eqref{C6},
\begin{gather*}
\Big\|\sum_{i=1}^nz_i^*\Big\|\le \Big\|\sum_{i=1}^n x_i'^*\Big\|+
\sum_{i=1}^{n}\|z^*_i-x_i'^*\|<\al+\kappa\xi.
\end{gather*}
Set $\hat x^*:=(x_1^*,\ldots,x_n^*):=\dfrac{\hat z^*}{\vertiii{\hat z^*}}$.
Then $\hat x^*\in N^C_{\widehat\Omega}(\hat x)$, and
\begin{gather*}
\vertiii{\hat x^*}=1,\quad
\Big\|\sum_{i=1}^nx_i^*\Big\| <\frac{\al+\kappa\xi}{1-\xi}<\be.
\end{gather*}
Hence, conditions \eqref{T4.3-5} are satisfied.
Moreover,
\begin{align*}
\vertiii{\hat x^*-\hat x'^*}
&\le \vertiii{\frac{\hat z^*}{\vertiii{\hat z^*}^*}-\hat z^*}+ \vertiii{\hat z^*-\hat x'^*}<\abs{\vertiii{\hat z^*}^*-1}+\xi<2\xi.
\end{align*}
Condition \eqref{T4.3-2} is a consequence of \eqref{T4.3-9}:
\begin{align}
\label{T4.3-6}
\sum_{i=1}^n\langle x_i^*,x_0+a_i+x'_i-x_i\rangle
&>\sum_{i=1}^n\langle x_i'^*, x_0+a_i+x'_i-x_i\rangle- 2\xi m=(1-2\xi)m>\tau m.
\end{align}
	
Suppose $(X,\|\cdot\|)$ and $(X^n,\vertiii{\,\cdot\,})$ are Asplund, and condition \eqref{C5} is satisfied.
Let $\hat \tau\in(\tau+2\xi,1)$.
Application of Corollary~\ref{C3.9}\;\eqref{C3.3-2} with $\hat \tau$ in place of $\tau$ in the above proof justifies conditions \eqref{T4.3-5} with $\hat x^*\in N^F_{\widehat\Omega}( \hat x)$, while the factor $1-2\xi$ in \eqref{T4.3-6} needs to be replaced by $\hat\tau-2\xi$ leading to the same  estimate.
This again proves \eqref{T4.3-2}.	
%The proof is complete.
\end{proof}

\begin{corollary}
\label{C4.3}
Let $(X,\|\cdot\|)$ and $(X^n,\vertiii{\,\cdot\,})$ be Banach, and compatibility condition \eqref{C6} be satisfied.
Assume that $\widehat\Omega$ is closed, and $\bx\in\bigcap_{i=1}^n\Omega_i$.
Suppose that $\{\Omega_1,\ldots,\Omega_n\}$ is approximately sta\-tionary at $\bx$.
The following assertions hold true:
\begin{enumerate}
\item\label{C4.3-2}
for any $\varepsilon>0$ and $\tau\in(0,1)$, there exist $\hat x:=(x_1,\ldots,x_n), (x'_1,\ldots,x'_n)\in\widehat\Omega$, $\hat a:=(a_1,\ldots,a_n)\in X^n$, {$x_0\in X$} and $\hat x^*:=(x_1^*,\ldots,x_n^*)\in N^C_{\widehat\Omega}(\hat x)$ such that conditions \eqref{T4.3-3} and \eqref{T4.3-2} are satisfied, and
\begin{gather}
\label{C4.3-1}
\Big\|\sum_{i=1}^nx_i^*\Big\|<\eps,\quad
\vertiii{\hat x^*}=1;
\end{gather}	
\item\label{C4.3-3}
if $(X,\|\cdot\|)$ and $(X^n,\vertiii{\,\cdot\,})$ are Asplund, and compatibility condition \eqref{C5} is satisfied,
then $N^C$ in \eqref{C4.3-2} can be replaced by $N^F$.
\end{enumerate}
\end{corollary}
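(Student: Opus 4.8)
The plan is to deduce Corollary~\ref{C4.3} directly from Theorem~\ref{T4.3} by exploiting the definition of approximate stationarity. Recall from Definition~\ref{D3.5}\,\eqref{D4.1-2} that $\{\Omega_1,\ldots,\Omega_n\}$ being approximately stationary at $\bx$ means precisely that it is approximately $\al$-stationary at $\bx$ \emph{for every} $\al>0$. Hence, given $\eps>0$ and $\tau\in(0,1)$, I would first fix any $\al$ with $0<\al<\eps$ (for definiteness, $\al:=\eps/2$) and use the fact that the collection is approximately $\al$-stationary at $\bx$.

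Next, I would apply Theorem~\ref{T4.3}\,\eqref{T4.2-1} with this $\al$, with $\be:=\eps$ (which is admissible since $\be>\al$), and with the given $\eps$ and $\tau$. The theorem produces points $\hat x,(x'_1,\ldots,x'_n)\in\widehat\Omega$, $\hat a\in X^n$, $x_0\in X$ and $\hat x^*\in N^C_{\widehat\Omega}(\hat x)$ satisfying \eqref{T4.3-3}, \eqref{T4.3-5} and \eqref{T4.3-2}. Conditions \eqref{T4.3-3} and \eqref{T4.3-2} are exactly those required in Corollary~\ref{C4.3}\,\eqref{C4.3-2}, while the first inequality in \eqref{T4.3-5} reads $\big\|\sum_{i=1}^n x_i^*\big\|<\be=\eps$ and the second reads $\vertiii{\hat x^*}=1$; together these give \eqref{C4.3-1}. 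This proves assertion \eqref{C4.3-2}.

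For assertion \eqref{C4.3-3}, I would run the same argument but invoke Theorem~\ref{T4.3}\,\eqref{T4.2-2} in place of \eqref{T4.2-1}: under the additional assumptions that $(X,\|\cdot\|)$ and $(X^n,\vertiii{\,\cdot\,})$ are Asplund and compatibility condition \eqref{C5} holds, the Clarke normal cone $N^C_{\widehat\Omega}(\hat x)$ may be replaced throughout by the \Fr\ normal cone $N^F_{\widehat\Omega}(\hat x)$, yielding the stated conclusion. There is essentially no obstacle here; the only point to watch is the bookkeeping of the constants, namely choosing $\al<\eps$ and then $\be=\eps>\al$, so that the bound $\big\|\sum_{i=1}^n x_i^*\big\|<\be$ supplied by Theorem~\ref{T4.3} collapses to the sharper bound $\big\|\sum_{i=1}^n x_i^*\big\|<\eps$ demanded by the corollary.
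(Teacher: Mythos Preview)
Your proposal is correct and matches the paper's intended derivation: Corollary~\ref{C4.3} is stated without proof precisely because it follows from Theorem~\ref{T4.3} by choosing any $\al\in(0,\eps)$ and $\be:=\eps$, exactly as you describe. The bookkeeping you flag (ensuring $\al<\be=\eps$) is the only point, and you have handled it correctly.
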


\begin{remark}
\label{R05}
\begin{enumerate}
\item
\label{R05.1}
Dropping inequality \eqref{T4.3-2} in Theorem~\ref{T4.3}\,\eqref{T4.2-1} together with the variables involved only in this condition leads to the simplified necessary conditions in the next assertion which can be employed in Theorem~\ref{T4.3} instead of assertion \eqref{T4.2-1}.
\begin{enumerate}
\item[(i)$^\prime$]\label{R5-1}
for any $\varepsilon>0$ and $\be>\al$, there exist $\hat x:=(x_1,\ldots,x_n)\in\widehat\Omega$ and $\hat x^*:=(x_1^*,\ldots,x_n^*)\in N^C_{\widehat\Omega}(\hat x)$ such that
$\vertiii{(x_1-\bx,\ldots,x_n-\bx)}<\varepsilon$, and
conditions \eqref{T4.3-5} are satisfied.
\end{enumerate}
If conditions \eqref{T4.3-5} in assertion \eqref{R5-1}$^\prime$ are replaced by conditions \eqref{C4.3-1}, the resulting assertion can be employed instead of assertion \eqref{C4.3-2} in Corollary~\ref{C4.3}.
In both cases, the second assertions of Theorem~\ref{T4.3} and Corollary~\ref{C4.3} remain valid.
\item
\label{R05.2}
Thanks to Proposition~\ref{P02.3}\,\eqref{P02.3.3}, instead of assuming in Theorem~\ref{T4.3} and Corollary~\ref{C4.3} the dual norm compatibility condition \eqref{C6}, one can assume that the primal norms $\|\cdot\|$ and $\vertiii{\,\cdot\,}$ are compatible in the sense of \eqref{C4}.
Moreover, in this case, the norm compatibility assumptions in the second parts of these statements can be dropped.
This observation is applicable also to the subsequent statements.
\item
Imposing certain sequential normal compactness assumptions (which are automatically satisfied in finite dimensions), one can formulate a limiting version of Theorem~\ref{T4.3} in terms of limiting normal cones; see comments after Theorem~\ref{EP}.
This remark applies also to the statements in the rest of the paper.
\end{enumerate}
\end{remark}

The Asplund space assertion \eqref{T4.2-2} in Theorem~\ref{T4.3} can be partially reversed.

\begin{theorem}
\label{T4.4}
Let $(X,\|\cdot\|)$ and $(X^n,\vertiii{\,\cdot\,})$ be Asplund,  $\widehat\Omega$ be closed, $\bx\in\bigcap_{i=1}^n\Omega_i$, $\al>0$ and $\be>0$.
Consider the following assertions:
\begin{enumerate}
\item\label{T4.4-1}
$\{\Omega_1,\ldots,\Omega_n\}$
is approximately $\al$-stationary at $\bx$;
\item\label{T4.4-2}
for any $\varepsilon>0$ and $\tau\in(0,1)$, there exist $\hat x:=(x_1,\ldots,x_n), (x'_1,\ldots,x'_n)\in\widehat\Omega$, $\hat a:=(a_1,\ldots,a_n)\in X^n$, {$x_0\in X$} and $\hat x^*:=(x_1^*,\ldots,x_n^*)\in N^F_{\widehat\Omega}(\hat x)$ such that conditions \eqref{T4.3-3}, \eqref{T4.3-5} and \eqref{T4.3-2} are satisfied;
\item\label{T4.4-3}
for any $\varepsilon>0$, there exist $\hat x:=(x_1,\ldots,x_n)\in\widehat\Omega$ and $\hat x^*:=(x_1^*,\ldots,x_n^*)\in N^F_{\widehat\Omega}(\hat x)$ such that
$\vertiii{(x_1-\bx,\ldots,x_n-\bx)}<\varepsilon$ and
conditions \eqref{T4.3-5} are satisfied.
\end{enumerate}
The following relations hold true:
{\renewcommand{\theenumi}{\rm\alph{enumi}}
\begin{enumerate}
\item\label{a1}
{\rm \eqref{T4.4-2} \folgt \eqref{T4.4-3}};
\item\label{b1}
if $\|\cdot\|$ and $\vertiii{\,\cdot\,}$  are compatible in the sense of \eqref{C4},
%\eqref{C5}, $\vertiii{\,\cdot\,}^*$ and $\|\cdot\|^*$ are compatible in the sense of \eqref{C6},
and $\be>\al$, then
{\rm \eqref{T4.4-1}~\folgt \eqref{T4.4-2}};
\item\label{c1}
if $\|\cdot\|$  and $\vertiii{\,\cdot\,}$  are compatible in the sense of \eqref{C5}, and $\al>\be$, then
{\rm \eqref{T4.4-3}~$\Rightarrow$~\eqref{T4.4-1}}.
\end{enumerate}
}
\end{theorem}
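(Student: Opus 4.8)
The three implications are of quite different depths; the plan is to settle (a) and (b) in a couple of lines each and to concentrate the work on (c). For \eqref{a1}, assertion \eqref{T4.4-3} is literally a sub-list of what \eqref{T4.4-2} provides: given $\eps>0$, apply \eqref{T4.4-2} with this $\eps$ and, say, $\tau:=1/2$; the points $\hat x\in\widehat\Omega$ and $\hat x^*\in N^F_{\widehat\Omega}(\hat x)$ it produces already satisfy $\vertiii{(x_1-\bx,\ldots,x_n-\bx)}<\eps$ (the first inequality in \eqref{T4.3-3}) together with \eqref{T4.3-5}, and nothing else is needed. For \eqref{b1}, by Proposition~\ref{P02.3}\,\eqref{P02.3.3} the compatibility condition \eqref{C4} implies both \eqref{C5} and \eqref{C6} (cf. Remark~\ref{R05}\,\eqref{R05.2}); hence, under the hypotheses of \eqref{b1}, Theorem~\ref{T4.3} applies in its Asplund form, and its assertion \eqref{T4.2-2} is word-for-word assertion \eqref{T4.4-2}.

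The substance is \eqref{c1}, the genuine converse (``sufficient conditions'') direction; it follows the scheme used for the analogous results in \cite{BuiKru19,CuoKru21.2}: a single \Fr\ normal at a nearby common point, together with a small norm of the sum of its components, already forces the perturbed sets to miss a small ball. Fix $\eps>0$. Since $\al>\be$, I would first fix $\al'\in(\be,\al)$, and then numbers $\theta\in(0,1)$ and $\ga>0$ small enough that $\theta\al'>\be+\ga(\kappa+\al')$, where $\kappa>0$ is the constant in \eqref{C5}. Apply \eqref{T4.4-3} with this $\eps$ to get $\hat x=(x_1,\ldots,x_n)\in\widehat\Omega$ and $\hat x^*=(x_1^*,\ldots,x_n^*)\in N^F_{\widehat\Omega}(\hat x)$ with $\vertiii{(x_1-\bx,\ldots,x_n-\bx)}<\eps$, $\|\sum_{i=1}^n x_i^*\|<\be$ and $\vertiii{\hat x^*}=1$. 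By the definition of the \Fr\ normal cone, choose $\eta>0$ so that $\langle\hat x^*,\hat u-\hat x\rangle\le\ga\vertiii{\hat u-\hat x}$ whenever $\hat u\in\widehat\Omega$ and $\vertiii{\hat u-\hat x}<\eta$; then pick $\rho\in(0,\eps)$ with $(\kappa+\al')\rho<\eta$; and, using that $\vertiii{\hat x^*}=1$ is the dual norm on $(X^n,\vertiii{\,\cdot\,})$, pick $\hat a=(a_1,\ldots,a_n)\in X^n$ with $\vertiii{\hat a}\le\al'\rho$ and $\langle\hat x^*,\hat a\rangle>\theta\al'\rho$.

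It then remains to check that $\bigcap_{i=1}^n(\Omega_i-x_i-a_i)\cap(\rho\B)=\emptyset$; granting this, the data $\rho\in(0,\eps)$, $x_i\in\Omega_i$, $a_i\in X$ meet every requirement of Definition~\ref{D3.5}\,\eqref{D4.1-1} (note $\vertiii{\hat a}\le\al'\rho<\al\rho$), so $\{\Omega_1,\ldots,\Omega_n\}$ is approximately $\al$-stationary at $\bx$. Suppose, for contradiction, that some $v\in\rho\B$ lies in the intersection, i.e.\ $\omega_i:=v+x_i+a_i\in\Omega_i$ for every $i$. Then $\hat\omega:=(\omega_1,\ldots,\omega_n)\in\widehat\Omega$, $\hat\omega-\hat x=(v+a_1,\ldots,v+a_n)$, and by the triangle inequality together with \eqref{C5}, $\vertiii{\hat\omega-\hat x}\le\kappa\|v\|+\vertiii{\hat a}\le(\kappa+\al')\rho<\eta$. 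Expanding $\langle\hat x^*,\hat a\rangle=\langle\hat x^*,\hat\omega-\hat x\rangle-\langle\sum_{i=1}^n x_i^*,v\rangle$ and using the \Fr\ estimate, $\|\sum_{i=1}^n x_i^*\|<\be$ and $\|v\|\le\rho$,
\[
\theta\al'\rho<\langle\hat x^*,\hat a\rangle\le\ga\vertiii{\hat\omega-\hat x}+\Big\|\sum_{i=1}^n x_i^*\Big\|\,\|v\|<\ga(\kappa+\al')\rho+\be\rho,
\]
contradicting $\theta\al'>\be+\ga(\kappa+\al')$; hence the intersection is empty. I expect no conceptual obstacle in \eqref{c1} beyond the bookkeeping: one must fix the parameters in the right order ($\al',\theta,\ga$ first, then $\hat x,\hat x^*$ and the $\eta$ they determine, then $\rho$, then $\hat a$) so that all the inequalities close, and recall that in the Asplund setting the \Fr\ neighbourhood $\eta$ is available simply because it is the meaning of the defining $\limsup\le0$. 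The second claim in \eqref{c1} (passing $N^F$ to $N^C$ is not needed here, only $N^F$ is used), and the localizations in Remark~\ref{R05}, are not required for this proof.
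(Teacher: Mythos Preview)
Your proof is correct and follows essentially the same route as the paper. Parts \eqref{a1} and \eqref{b1} are handled identically (trivial projection and invoking Proposition~\ref{P02.3}\,\eqref{P02.3.3} plus Theorem~\ref{T4.3}\,\eqref{T4.2-2}), and for \eqref{c1} both you and the paper use the \Fr\ normal cone inequality at $\hat x$ to bound $\langle\hat x^*,\hat\omega-\hat x\rangle$ from above, pick $\hat a$ via the dual-norm equality $\vertiii{\hat x^*}=1$ to make $\langle\hat x^*,\hat a\rangle$ large, and reach a contradiction through the decomposition $\langle\hat x^*,\hat a\rangle=\langle\hat x^*,\hat\omega-\hat x\rangle-\langle\sum_i x_i^*,v\rangle$. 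The only difference is bookkeeping: the paper carries a single slack parameter $\xi\in(0,\al-\be)$ and selects $\rho$ directly from the \Fr\ definition, whereas you introduce three auxiliary numbers $\al',\theta,\ga$ and an intermediate $\eta$; this is slightly heavier but equally valid.
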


\begin{proof}
The implication in
\eqref{a1} is straightforward as \eqref{T4.4-3} is a simplified version of \eqref{T4.4-2}; see Remark~\ref{R05}\,\eqref{R05.1}.
In view of Proposition~\ref{P02.3}\,\eqref{P02.3.3},
the implication in \eqref{b1} is a direct consequence of Theorem~\ref{T4.3}\,\eqref{T4.2-2}.
We now prove the implication in \eqref{c1}.

Suppose that $\|\cdot\|$ and $\vertiii{\,\cdot\,}$  are compatible in the sense of \eqref{C5} with some $\kappa>0$,
assertion \eqref{T4.4-3} is true, and $\al>\be$.
Let $\eps>0$, $\hat x:=(x_1,\ldots,x_n)\in\widehat\Omega$ and $\hat x^*:=(x_1^*,\ldots,x_n^*)\in N^F_{\widehat\Omega}(\hat x)$ satisfy
$\vertiii{(x_1-\bx,\ldots,x_n-\bx)}<\varepsilon$ and
conditions \eqref{T4.3-5}.
Choose a $\xi\in(0,\al-\be)$.
Thus, $\xi':=\al-\be-\xi>0$.
By the definition of \Fr\ normal cone,
there is a $\rho\in(0,\varepsilon)$ such that
\begin{gather}
\label{T4.4P1}
\langle \hat x^*,\omega-\hat x\rangle\le \dfrac{\xi}{\kappa+\al} \vertiii{\omega-\hat x}<\xi\rho
\;\;
\text{for all}
\;\;
\omega\in\widehat\Omega\;\;\text{with}\;\;
\vertiii{\omega-\hat x}<(\kappa+\al)\rho.
\end{gather}
By the equality in \eqref{T4.3-5},
%we have $-\al\rho<\langle x^*,a\rangle<\al\rho$ for all $a\in X ^n$ with $\vertiii{a}<\al\rho$.
%Thus,
one can choose  an $\hat a:=(a_1,\ldots,a_n)\in X^n$ such that
\begin{gather}
\label{T4.4P2}
\vertiii{\hat a}<\al\rho
\;\;
\text{and}
\;\;
\langle \hat x^*,\hat a\rangle
>(\al-\xi')\rho=(\be+\xi)\rho.
\end{gather}
Suppose that $\bigcap_{i=1}^n(\Omega_i-x_i-a_i)\cap (\rho\B)\ne\emptyset$, and choose an $x_0\in\rho\B$ and $\hat x^\circ:=(x^\circ_1,\ldots,x^\circ_n)\in\widehat\Omega$ such that
$x^\circ_i-x_i-a_i=x_0$ for all $i=1,\ldots,n$.
Then, in view of \eqref{C5} and the first inequality \eqref{T4.4P2},
\begin{align*}
\vertiii{\hat x^\circ-\hat x}
\le \vertiii{(x_0,\ldots,x_0)_n}+\vertiii{\hat a}\le \kappa\|x_0\|+\vertiii{\hat a}<(\kappa+\al)\rho,
\end{align*}
and, by \eqref{T4.4P1}, $\ang{\hat x^*,\hat x^\circ-\hat x} <\xi\rho$.
Combining this with the second inequality in \eqref{T4.4P2}, we obtain
\begin{align*}
\Big\langle \sum_{i=1}^{n}x^*_i,x_0\Big\rangle=\langle \hat x^*,(x_0,\ldots,x_0)_n \rangle=\langle \hat x^*,\hat x^\circ-\hat x\rangle-\langle \hat x^*,\hat a\rangle<-\be\rho.
\end{align*}	
On the other hand, by the inequality in \eqref{T4.3-5},
$\langle \sum_{i=1}^{n}x^*_i,x_0\rangle> -\be\rho,$
a contradiction.
Hence, $\bigcap_{i=1}^n(\Omega_i-x_i-a_i)\cap (\rho\B)=\emptyset$, and consequently, $\{\Omega_1,\ldots,\Omega_n\}$
is approximate $\al$-stationary at $\bx$ with respect to $\vertiii{\,\cdot\,}$.
%The proof is complete.
\end{proof}

\if{
\begin{remark}
Thanks to Proposition~\ref{P02.3}\,\eqref{P02.3.3}, all norm compatibility assumptions in Theorem~\ref{T4.4} can be replaced by the single assumption \eqref{C4}; cf. Remark~\ref{R05}\,\eqref{R05.2},
\end{remark}
}\fi

The next corollary gives the \emph{extended extremal principle}
%with respect to $\vertiii{\,\cdot\,}$.
generalizing and improving the corresponding result in \cite[Theorem~3.7]{Kru03}.

\begin{corollary}
\label{C4.5}
Let $(X,\|\cdot\|)$ and $(X^n,\vertiii{\,\cdot\,})$ be Asplund, $\|\cdot\|$ and $\vertiii{\,\cdot\,}$  be compatible in the sense of \eqref{C4}, $\widehat\Omega$ be closed, and $\bx\in\bigcap_{i=1}^n\Omega_i$.
The following assertions are equivalent:
\begin{enumerate}
\item
$\{\Omega_1,\ldots,\Omega_n\}$
is approximately stationary at $\bx$;
\item	
for any $\varepsilon>0$ and $\tau\in(0,1)$, there exist $(x_1,\ldots,x_n), (x'_1,\ldots,x'_n)\in\widehat\Omega$, $\hat a:=(a_1,\ldots,a_n)\in X^n$, {$x_0\in X$} and $\hat x^*:=(x_1^*,\ldots,x_n^*)\in N^F_{\widehat\Omega}(\hat x)$ such that conditions \eqref{T4.3-3}, \eqref{T4.3-2} and \eqref{C4.3-1} are satisfied;
\item
for any $\varepsilon>0$, there exist $\hat x:=(x_1,\ldots,x_n)\in\widehat\Omega$ and $\hat x^*:=(x_1^*,\ldots,x_n^*)\in N^F_{\widehat\Omega}(\hat x)$ such that
$\vertiii{(x_1-\bx,\ldots,x_n-\bx)}<\varepsilon$ and
conditions \eqref{C4.3-1} are satisfied.
\end{enumerate}
\end{corollary}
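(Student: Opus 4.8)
The plan is to prove the cycle of implications: first assertion $\Rightarrow$ second assertion $\Rightarrow$ third assertion $\Rightarrow$ first assertion, reusing the results already established in this section. Throughout, Proposition~\ref{P02.3}\,\eqref{P02.3.3} is used to pass from the single standing hypothesis \eqref{C4} to the compatibility conditions \eqref{C5} and \eqref{C6} required by those results.

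For the first implication I would invoke Corollary~\ref{C4.3} directly. Since $(X,\|\cdot\|)$ and $(X^n,\vertiii{\,\cdot\,})$ are Asplund (hence Banach), $\widehat\Omega$ is closed, $\bx\in\bigcap_{i=1}^n\Omega_i$, and \eqref{C4} yields \eqref{C5} and \eqref{C6} by Proposition~\ref{P02.3}\,\eqref{P02.3.3}, all hypotheses of Corollary~\ref{C4.3} (including those of its Asplund part) are in force. Approximate stationarity at $\bx$ is precisely the standing assumption of that corollary, so assertion \eqref{C4.3-2} of Corollary~\ref{C4.3}, upgraded to \Fr\ normal cones by its part \eqref{C4.3-3}, is exactly the second assertion here. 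The step from the second assertion to the third is a plain weakening: discard the auxiliary vectors $(x'_1,\ldots,x'_n)$, $\hat a$ and $x_0$, drop inequality \eqref{T4.3-2}, and keep only $\hat x$, $\hat x^*\in N^F_{\widehat\Omega}(\hat x)$, the first inequality in \eqref{T4.3-3} and condition \eqref{C4.3-1}; cf.\ Remark~\ref{R05}\,\eqref{R05.1}.

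For the closing implication, fix $\al>0$ and choose any $\be\in(0,\al)$. Given $\eps>0$, apply the third assertion with $\min\{\eps,\be\}$ in place of $\eps$: the resulting $\hat x\in\widehat\Omega$ and $\hat x^*\in N^F_{\widehat\Omega}(\hat x)$ satisfy $\vertiii{(x_1-\bx,\ldots,x_n-\bx)}<\eps$ together with conditions \eqref{T4.3-5} for this $\be$ (since $\|\sum_{i=1}^nx_i^*\|<\min\{\eps,\be\}\le\be$). Thus assertion \eqref{T4.4-3} of Theorem~\ref{T4.4} holds for the pair $(\al,\be)$. Since \eqref{C4} implies \eqref{C5} and $\al>\be$, Theorem~\ref{T4.4}\,\eqref{c1} gives that $\{\Omega_1,\ldots,\Omega_n\}$ is approximately $\al$-stationary at $\bx$; as $\al>0$ is arbitrary, it is approximately stationary at $\bx$, i.e.\ the first assertion holds.

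No deep difficulty is expected, since the substantive work — the Ekeland-principle-plus-sum-rule derivation and its partial converse — is already carried out in Theorems~\ref{T4.3} and \ref{T4.4} and in Corollary~\ref{C4.3}. The main point demanding care is the bookkeeping of compatibility conditions: one must verify that the lone hypothesis \eqref{C4} supplies \eqref{C5} and \eqref{C6} wherever the cited statements need them, and that the Asplund property of $X$ and $X^n$ transfers to the enlarged product spaces used inside Corollary~\ref{C4.3} (via the norm $\vertiii{\,\cdot\,}_+$ constructed in the proof of Corollary~\ref{C3.3}), so that the \Fr-normal-cone versions apply.
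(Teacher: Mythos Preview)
Your proposal is correct and follows precisely the route the paper intends: the corollary is stated without proof as an immediate consequence of Theorem~\ref{T4.4} (parts \eqref{a1}--\eqref{c1}) and Corollary~\ref{C4.3}, with Proposition~\ref{P02.3}\,\eqref{P02.3.3} supplying \eqref{C5} and \eqref{C6} from the single hypothesis \eqref{C4}. Your cycle $(i)\Rightarrow(ii)\Rightarrow(iii)\Rightarrow(i)$ and the choice of $\be\in(0,\al)$ in the last step are exactly what is needed; the remark about Asplundness of the enlarged product space is already absorbed in the proof of Corollary~\ref{C3.3} and need not be re-verified here.
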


Reversing the conditions in Definition~\ref{D3.5}, we arrive at extensions of the \emph{transversality} properties discussed in \cite{Kru05,KruTha13,CuoKru21.2}.

\begin{definition}
Let $(X,\|\cdot\|)$ and $(X^n,\vertiii{\,\cdot\,})$ be normed spaces, and $\bx\in\bigcap_{i=1}^n\Omega_i$.
\begin{enumerate}
\item
Let $\al>0$.
The collection $\{\Omega_1,\ldots,\Omega_n\}$ is $\al$-transversal at $\bx$ if there exists an $\varepsilon>0$ such that
$\bigcap_{i=1}^n(\Omega_i-x_i-a_i)\cap (\rho\B)\ne\emptyset$ for all
$\rho\in(0,\varepsilon)$, $x_i\in\Omega_i$ and $a_i\in X$ $(i=1,\ldots,n)$ satisfying $\vertiii{(x_1-\bx,\ldots,x_n-\bx)}<\varepsilon$ and
$\vertiii{(a_1,\ldots,a_n)}<\al\rho$.
\item
The collection $\{\Omega_1,\ldots,\Omega_n\}$ is transversal at $\bx$ if it is $\al$-transversal at $\bx$ for some $\al>0$.
\end{enumerate}
\end{definition}

The statements of Theorem~\ref{T4.3} and its corollaries can also be easily ``reversed'' to produce dual characterizations of transversality.
For instance, Corollary~\ref{C4.5} leads to the following statement.

\begin{corollary}
Let $(X,\|\cdot\|)$ and $(X^n,\vertiii{\,\cdot\,})$ be Asplund, $\|\cdot\|$ and $\vertiii{\,\cdot\,}$  be compatible in the sense of \eqref{C4},  $\widehat\Omega$ be closed, and $\bx\in\bigcap_{i=1}^n\Omega_i$.
The collection $\{\Omega_1,\ldots,\Omega_n\}$ is  transversal at $\bx$ if and only if there exist $\al>0$ and $\varepsilon>0$ such that
$
\|\sum_{i=1}^{n}x^*_i\|>\al
$
for all $\hat x:=(x_1,\ldots,x_n)\in\widehat\Omega$ and $\hat x^*:=(x_1^*,\ldots,x_n^*)\in N^F_{\widehat\Omega}(\hat x)$
satisfying
$\vertiii{(x_1-\bx,\ldots,x_n-\bx)}<\varepsilon$ and
$\vertiii{\hat x^*}=1$.
\end{corollary}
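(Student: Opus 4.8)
The plan is to obtain the statement as the logical negation of the equivalence between the first and third assertions of Corollary~\ref{C4.5}. First I would observe, directly from the definitions, that $\al$-transversality of $\{\Omega_1,\ldots,\Omega_n\}$ at $\bx$ is exactly the negation of approximate $\al$-stationarity at $\bx$: the condition ``there is $\eps>0$ such that $\bigcap_{i=1}^n(\Omega_i-x_i-a_i)\cap(\rho\B)\ne\es$ for all $\rho\in(0,\eps)$, $x_i\in\Omega_i$, $a_i\in X$ with $\vertiii{(x_1-\bx,\ldots,x_n-\bx)}<\eps$ and $\vertiii{(a_1,\ldots,a_n)}<\al\rho$'' is word for word the negation of Definition~\ref{D3.5}\,\eqref{D4.1-1}. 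Hence transversality at $\bx$ (i.e.\ $\al$-transversality for some $\al>0$) is precisely the negation of approximate stationarity at $\bx$ (i.e.\ approximate $\al$-stationarity for every $\al>0$).

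Next I would apply Corollary~\ref{C4.5}, whose hypotheses ($X$ and $X^n$ Asplund, compatibility \eqref{C4}, $\widehat\Omega$ closed, $\bx\in\bigcap_{i=1}^n\Omega_i$) are exactly those of the present statement. It gives that approximate stationarity at $\bx$ is equivalent to its third assertion: for every $\eps>0$ there exist $\hat x:=(x_1,\ldots,x_n)\in\widehat\Omega$ and $\hat x^*:=(x_1^*,\ldots,x_n^*)\in N^F_{\widehat\Omega}(\hat x)$ with $\vertiii{(x_1-\bx,\ldots,x_n-\bx)}<\eps$, $\vertiii{\hat x^*}=1$ and $\|\sum_{i=1}^nx_i^*\|<\eps$. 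Negating this and combining with the previous paragraph, transversality at $\bx$ holds if and only if there is $\eps_0>0$ for which no such triple exists, i.e.\ such that every $\hat x\in\widehat\Omega$ and $\hat x^*\in N^F_{\widehat\Omega}(\hat x)$ satisfying $\vertiii{(x_1-\bx,\ldots,x_n-\bx)}<\eps_0$ and $\vertiii{\hat x^*}=1$ obey $\|\sum_{i=1}^nx_i^*\|\ge\eps_0$.

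Finally I would reconcile this with the asserted form of the condition. For the necessity direction, take $\al:=\eps_0/2$ and $\eps:=\eps_0$, turning the non-strict bound ``$\ge\eps_0$'' into the required strict inequality ``$>\al$''. For the sufficiency direction, given $\al>0$ and $\eps>0$ as in the statement, put $\eps':=\min\{\al,\eps\}$; then no triple can satisfy $\vertiii{(x_1-\bx,\ldots,x_n-\bx)}<\eps'$, $\vertiii{\hat x^*}=1$ and $\|\sum_{i=1}^nx_i^*\|<\eps'$ simultaneously, so the third assertion of Corollary~\ref{C4.5} fails, approximate stationarity fails, and hence $\{\Omega_1,\ldots,\Omega_n\}$ is transversal at $\bx$. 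I do not expect a genuine analytic obstacle here; the only points demanding care are that the single parameter $\eps$ in the third assertion of Corollary~\ref{C4.5} plays two roles (a neighbourhood radius and a bound on $\|\sum_{i=1}^nx_i^*\|$) and must be split appropriately, and that the passage between the non-strict bound ``$\ge\eps_0$'' and the strict one ``$>\al$'' is arranged by halving.
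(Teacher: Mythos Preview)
Your proposal is correct and follows exactly the route the paper indicates: the corollary is stated without proof, the paper merely noting that ``Corollary~\ref{C4.5} leads to the following statement'' by reversing (negating) the equivalence there. Your careful handling of the single parameter $\eps$ in assertion~(iii) of Corollary~\ref{C4.5} splitting into the two parameters $\al$ and $\eps$, and of the strict versus non-strict inequality, fills in precisely the details the paper leaves implicit.
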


%\paragraph*{Extremality and stationarity.}

Similar to approximate stationarity in Definition~\ref{D3.5}, one can formulate abstract product norm extensions of the extremality property in Definition~\ref{D1.1} and stationarity property studied in
\cite{Kru04,Kru05}.

\begin{definition}
\label{D4.6}
Let $(X,\|\cdot\|)$ and $(X^n,\vertiii{\,\cdot\,})$ be normed spaces, and $\bx\in\bigcap_{i=1}^n\Omega_i$.
The collection $\{\Omega_1,\ldots,\Omega_n\}$ is:
\begin{enumerate}
\item\label{D4.8-1}
extremal at $\bx$ if
there exists a $\rho\in(0,+\infty]$ such that,
for any $\varepsilon>0$, there exist  $a_1,\ldots,a_n\in X$ such that
$\vertiii{(a_1,\ldots,a_n)}<\varepsilon$ and
$\bigcap_{i=1}^n(\Omega_i-a_i)\cap B_\rho(\bx)=\emptyset$;
\item\label{D4.8-2}
stationary at $\bx$ if for any $\varepsilon>0$, there exist $\rho\in(0,\varepsilon)$ and $a_1,\ldots,a_n\in X$ such that
$\vertiii{(a_1,\ldots,a_n)}<\eps\rho$ and
$\bigcap_{i=1}^n(\Omega_i-a_i)\cap B_\rho(\bx)=\emptyset$.
\end{enumerate}
\end{definition}

It obviously holds \eqref{D4.8-1} $\Rightarrow$ \eqref{D4.8-2} in Definition~\ref{D4.6}, while the latter property implies approximate stationarity in Definition~\ref{D3.5}\,\eqref{D4.1-2}.
Thus, the necessary conditions for approximate stationarity in Corollary~\ref{C4.3} are applicable to the stationarity and extremality properties in Definition~\ref{D4.6}.
%We are going to show that in the convex setting all these properties are equivalent.
Observe also that global extremality (i.e., with $\rho=+\infty$) implies local extremality with any $\rho\in(0,+\infty)$.
By analogy with Definition~\ref{D3.5}\,\eqref{D4.1-1}, one can define quantitative versions of the properties in Definition~\ref{D4.6}.

The next proposition extending the corresponding assertions in \cite[Section~4.1]{Kru05} shows that in the convex case the main properties discussed above are equivalent.

\begin{proposition}
Let $(X,\|\cdot\|)$ and $(X^n,\vertiii{\,\cdot\,})$ be normed spaces, $\Omega_1,\ldots,\Omega_n$ be convex, and $\bx\in\bigcap_{i=1}^n\Omega_i$.
The following assertions are equivalent:
\begin{enumerate}
\item\label{P4.9-1}
$\{\Omega_1,\ldots,\Omega_n\}$ is locally extremal at $\bx$;
\item\label{P4.9-2}
$\{\Omega_1,\ldots,\Omega_n\}$ is stationary at $\bx$;
\item\label{P4.9-3}
$\{\Omega_1,\ldots,\Omega_n\}$ is approximately sta\-tionary at $\bx$;
\item\label{P4.9-4}
for any $\rho,\eps\in(0,+\infty)$, there exist  $a_1,\ldots,a_n\in X$ such that
$\vertiii{(a_1,\ldots,a_n)}<\varepsilon$ and
${\bigcap_{i=1}^n(\Omega_i-a_i)\cap B_\rho(\bx)=\emptyset}$.
\end{enumerate}	
\end{proposition}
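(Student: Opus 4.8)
The plan is to close the cycle of implications $\eqref{P4.9-4}\Rightarrow\eqref{P4.9-1}\Rightarrow\eqref{P4.9-2}\Rightarrow\eqref{P4.9-3}\Rightarrow\eqref{P4.9-4}$. The implication $\eqref{P4.9-4}\Rightarrow\eqref{P4.9-1}$ is immediate: one fixes a single value of $\rho$ in \eqref{P4.9-4}. The chain $\eqref{P4.9-1}\Rightarrow\eqref{P4.9-2}\Rightarrow\eqref{P4.9-3}$ is elementary and, as already noted in the discussion preceding the proposition, uses no convexity. Local extremality with a finite radius $\rho_0$ (Definition~\ref{D4.6}\,\eqref{D4.8-1}) yields stationarity (Definition~\ref{D4.6}\,\eqref{D4.8-2}): given $\varepsilon>0$, take $\rho:=\min\{\rho_0,\varepsilon\}/2<\varepsilon$, note that $B_\rho(\bx)\subseteq B_{\rho_0}(\bx)$, and invoke local extremality with perturbation bound $\varepsilon\rho$. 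Stationarity yields approximate stationarity (Definition~\ref{D3.5}\,\eqref{D4.1-2}): for arbitrary $\al>0$ and $\varepsilon>0$, apply stationarity with $\min\{\al,\varepsilon\}$ in place of $\varepsilon$, take all base points $x_i$ equal to $\bx$ (so that $\vertiii{(x_1-\bx,\ldots,x_n-\bx)}=0<\varepsilon$), and translate the ball $B_\rho(\bx)$ to the ball $\rho\B$ about the origin to obtain exactly the data required for approximate $\al$-stationarity.

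The substance of the proposition is the converse $\eqref{P4.9-3}\Rightarrow\eqref{P4.9-4}$, where convexity enters through the following scaling fact, which I would state with an arbitrary common point $q$: if $\Omega_1,\ldots,\Omega_n$ are convex, $q\in\bigcap_{i=1}^n\Omega_i$, $a_1,\ldots,a_n\in X$, $r>0$ and $\bigcap_{i=1}^n(\Omega_i-a_i)\cap B_r(q)=\emptyset$, then $\bigcap_{i=1}^n\bigl(\Omega_i-(\rho/r)a_i\bigr)\cap B_\rho(q)=\emptyset$ for every $\rho\ge r$. I would prove this by contradiction: if some $x\in B_\rho(q)$ satisfies $x+(\rho/r)a_i\in\Omega_i$ for all $i$, then, putting $t:=r/\rho\in(0,1]$, the point $y:=q+t(x-q)$ lies in $B_r(q)$, and the identity $y+a_i=t\bigl(x+(\rho/r)a_i\bigr)+(1-t)q$, together with $q\in\Omega_i$ and convexity of $\Omega_i$, gives $y+a_i\in\Omega_i$ for every $i$, i.e., $y\in\bigcap_{i=1}^n(\Omega_i-a_i)\cap B_r(q)$, contradicting the hypothesis.

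To pass from $\eqref{P4.9-3}$ to $\eqref{P4.9-4}$, let $\sigma>0$ and $\eta>0$ be arbitrary (these are the radius and the perturbation bound appearing in \eqref{P4.9-4}). I would apply approximate $\al$-stationarity with $\al:=\eta/(3\sigma)$ and tolerance $\varepsilon:=\min\{\eta/3,\sigma/2\}$ to get $\rho\in(0,\varepsilon)$, points $x_i\in\Omega_i$ and vectors $a_i$ satisfying $\vertiii{(x_1-\bx,\ldots,x_n-\bx)}<\varepsilon$, $\vertiii{(a_1,\ldots,a_n)}<\al\rho$ and $\bigcap_{i=1}^n(\Omega_i-x_i-a_i)\cap(\rho\B)=\emptyset$. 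Viewing the last relation as a statement about the convex sets $\Omega_i-x_i$, each containing $0$, and noting $\rho<\varepsilon<\sigma$, the scaling fact applied with $q=0$, $r=\rho$, and $\sigma$ as the larger radius yields $\bigcap_{i=1}^n\bigl((\Omega_i-x_i)-(\sigma/\rho)a_i\bigr)\cap B_\sigma(0)=\emptyset$; translating back by $\bx$ and setting $b_i:=(x_i-\bx)+(\sigma/\rho)a_i$ turns this into $\bigcap_{i=1}^n(\Omega_i-b_i)\cap B_\sigma(\bx)=\emptyset$, while $\vertiii{(b_1,\ldots,b_n)}\le\vertiii{(x_1-\bx,\ldots,x_n-\bx)}+(\sigma/\rho)\vertiii{(a_1,\ldots,a_n)}<\varepsilon+\al\sigma<\eta$, which is precisely \eqref{P4.9-4}. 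I expect this implication to be the main obstacle, and the only step requiring care: approximate stationarity is genuinely weaker than extremality, since it lets the base points $x_i$ drift from $\bx$ and, worse, to different positions for different $i$. The resolution is that each recentred set $\Omega_i-x_i$ still contains the origin, so the convex scaling fact applies to the recentred collection; the blow-up factor $\sigma/\rho$ is harmless because $\al$ may be chosen as small as we wish, and the drift $x_i-\bx$ adds only the controllable amount $\varepsilon$ to the size of the final common perturbation.
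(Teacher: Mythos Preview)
Your proof is correct and follows essentially the same strategy as the paper: the easy implications $\eqref{P4.9-4}\Rightarrow\eqref{P4.9-1}\Rightarrow\eqref{P4.9-2}\Rightarrow\eqref{P4.9-3}$ come from the definitions, and the substantial step $\eqref{P4.9-3}\Rightarrow\eqref{P4.9-4}$ rests on the convex scaling observation that a point in a translated intersection at one radius can be pulled by a convex combination with a common point to give a point at a smaller radius. The only difference is organizational: the paper argues $\eqref{P4.9-3}\Rightarrow\eqref{P4.9-4}$ by contrapositive (assuming \eqref{P4.9-4} fails and showing approximate stationarity fails), whereas you argue directly, first isolating the scaling fact as a standalone claim (emptiness at radius $r$ propagates to emptiness at any larger radius with proportionally enlarged shifts) and then applying it to the recentred sets $\Omega_i-x_i$; this makes the role of convexity and the handling of the drifting base points $x_i$ a bit more transparent, but the underlying computation is the same.
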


\begin{proof}
Implications \eqref{P4.9-4} $\Rightarrow$ \eqref{P4.9-1} $\Rightarrow$ \eqref{P4.9-2} $\Rightarrow$ \eqref{P4.9-3} are direct consequences of the definitions.
We prove that
\eqref{P4.9-3} $\Rightarrow$ \eqref{P4.9-4}.

Suppose that \eqref{P4.9-4} does not hold, i.e., there exist $\varepsilon>0$ and $\rho>0$ such that $\bigcap_{i=1}^n(\Omega_i-a_i)\cap B_\rho(\bx)\ne\emptyset$ for all
$a_1,\ldots,a_n\in X$  satisfying
$\vertiii{(a_1,\ldots,a_n)}<\varepsilon$.
Then $\bigcap_{i=1}^n(\Omega_i-a_i-x_i)\cap(\rho\B) \ne\emptyset$ for all $x_i\in\Omega_i$ and
$a_i\in X$ $(i=1,\ldots,n)$ satisfying $\vertiii{(x_1-\bx,\ldots,x_n-\bx)}<\varepsilon/2$ and
$\vertiii{(a_1,\ldots,a_n)}<\varepsilon/2$.

Set $\eps':=\min\left\{\frac{\eps}{2},\frac{\varepsilon}{2\rho},\rho\right\}$.
Take arbitrary $\rho'\in(0,\eps')$,
$x_i\in\Omega_i$ and $a_i\in X$ $(i=1,\ldots,n)$  satisfying
$\vertiii{(x_1-\bx,\ldots,x_n-\bx)} <\varepsilon'$ and
$\vertiii{(a_1,\ldots,a_n)}<\rho'\eps'$.
Set $t:={\rho'}/{\rho}\in(0,1)$.
Then $\vertiii{(x_1-\bx,\ldots,x_n-\bx)} <\varepsilon/2$ and $\vertiii{(a_1/t,\ldots,a_n/t)}<\eps/2$.
Hence, there exists an $x\in\rho\B$ such that
$x+a_i/t+x_i\in\Omega_i$ for all $i=1,\ldots,n$.
Observe that $\|tx\|<t\rho=\rho'$ and, in view of convexity, $tx+a_i+x_i=t(x+a_i/t+x_i)+(1-t)x_i\in\Omega_i$ for all $i=1,\ldots,n$.
Thus, $tx\in\bigcap_{i=1}^n(\Omega_i-a_i-x_i)\cap (\rho'\B)$, i.e.,
$\{\Omega_1,\ldots,\Omega_n\}$ is not approximately stationary at $\bx$.
%The proof is complete.
\end{proof}

\section*{Acknowledgments}

{We wish to thank the reviewers for their helpful comments which allowed us to improve the presentation.}

A part of the work was done during Alexander Kruger's stay at the Vietnam Institute for Advanced Study in Mathematics in Hanoi.
He is grateful to the Institute for its hospitality and supportive environment.

\section*{Disclosure statement}
No potential conflict of interest was reported by the authors.

\section*{Funding}
Nguyen Duy Cuong has been supported by the Postdoctoral Scholarship Programme of Vingroup Innovation Foundation (VinIF) code VINIF.2023.STS.54.
%Nguyen Duy Cuong is supported by Vietnam National Program for the Development of Mathematics 2021-2030 under grant number B2023-CTT-09.

\section*{ORCID}
Nguyen Duy Cuong http://orcid.org/0000-0003-2579-3601
\\
Alexander Y. Kruger http://orcid.org/0000-0002-7861-7380

\addcontentsline{toc}{section}{References}
\bibliography{BUCH-kr,Kruger,KR-tmp}
\bibliographystyle{tfnlm}
\end{document}